\title{Enumerating odd--degree hyperelliptic \\ curves and abelian surfaces over $\mathbb{P}^1$}
\date{\vspace{-3ex}}
\author{Changho Han and Jun--Yong Park}
\newtheorem{thm}{Theorem}[section]
\newtheorem{Mthm}[thm]{Main Theorem}
\newtheorem{lem}[thm]{Lemma}
\newtheorem{cor}[thm]{Corollary}
\newtheorem{prop}[thm]{Proposition}
\theoremstyle{definition}
\newtheorem{defn}[thm]{Definition}
\newtheorem{conj}[thm]{Conjecture}
\newtheorem{rmk}[thm]{Remark}
\newtheorem{note}[thm]{Note}
\newtheorem{exmp}[thm]{Example}
\newcommand{\iso}{\cong}
\newcommand{\Hg}{\overline{\mathcal{H}}}
\newcommand{\Mg}{\overline{\mathcal{M}}}
\newcommand{\Me}{\overline{\mathcal{M}}_{1,1}}
\newcommand{\pf}{\mathfrak{p}}
\newcommand{\I}{{\mathop{\rm I}}}
\newcommand{\Pcv}{\mathcal{P}(\vec{\lambda})}
\newcommand{\Pov}{\mathcal{P}(\vec{\Lambda})}
\newcommand{\Ac}{\mathcal{A}}
\newcommand{\Bc}{\mathcal{B}}
\newcommand{\Ec}{\mathcal{E}}
\newcommand{\Hc}{\mathcal{H}}
\newcommand{\Oc}{\mathcal{O}}
\newcommand{\Pc}{\mathcal{P}}
\newcommand{\Lc}{\mathcal{L}}
\newcommand{\Nc}{\mathcal{N}}
\newcommand{\M}{\mathcal{M}}
\newcommand{\Zc}{\mathcal{Z}}
\newcommand{\Fc}{\mathcal{F}}
\newcommand{\Ic}{\mathcal{I}}
\newcommand{\Lambdavec}{{\vec{\Lambda}}}
\newcommand{\Xc}{\mathcal{X}}
\newcommand{\Yc}{\mathcal{Y}}
\newcommand{\Qlb}{\overline{\Qb}_\ell}
\newcommand{\Ql}{\Qb_{\ell}}
\newcommand{\Z}{\mathbb{Z}}
\newcommand{\Q}{\mathbb{Q}}
\newcommand{\N}{\mathbb{N}}
\newcommand{\Pb}{\mathbb{P}}
\newcommand{\Ab}{\mathbb{A}}
\newcommand{\A}{\mathbb{A}}
\newcommand{\Cb}{\mathbb{C}}
\newcommand{\Fb}{\mathbb{F}}
\newcommand{\Gb}{\mathbb{G}}
\newcommand{\Lb}{\mathbb{L}}
\newcommand{\Nb}{\mathbb{N}}
\newcommand{\Qb}{\mathbb{Q}}
\newcommand{\Rb}{\mathbb{R}}
\newcommand{\Zb}{\mathbb{Z}}
\newcommand{\Fqbar}{\overline{\mathbb{F}}_q}
\newcommand{\et}{{\acute{et}}}
\newcommand{\Spec}{\mathrm{Spec}}
\newcommand{\Proj}{Proj}
\DeclareMathOperator{\Frob}{Frob}
\DeclareMathOperator{\Hom}{Hom}
\DeclareMathOperator{\Gal}{Gal}
\DeclareMathOperator{\Aut}{Aut}
\begin{document}

    \maketitle

    \vspace{-2ex}

    \begin{abstract}
    Given asymptotic counts in number theory, a question of Venkatesh asks what is the topological nature of lower order terms. We consider the arithmetic aspect of the inertia stack of an algebraic stack over finite fields to partially answer this question. Subsequently, we acquire new sharp enumerations on quasi--admissible odd--degree hyperelliptic curves over $\mathbb{F}_q(t)$ ordered by bounded discriminant height.
    \end{abstract}

    %which renders analogous heuristics over $\mathbb{Q}$ through the global fields analogy. 
    %$\mathbb{P}_{\mathbb{F}_q}^{1}$

    \vspace{-4ex}

    %\tableofcontents

    %\newpage

    \section{Introduction}
    \label{sec:intro}

    \par In \cite[Problem 5]{GGW}, Akshay Venkatesh asks the following question:

    \vspace{-.06in}
 
    \begin{center}
    \textit{What is the topological meaning of secondary terms \\ appearing in asymptotic counts in number theory?}
    \end{center}

    \vspace{-.06in}

    \par As explained therein by Venkatesh, in many interesting number theory problems (e.g., counting number fields, arithmetic curves or abelian varieties over a number field) one has not only a main term in the asymptotic count, but a secondary term or more. For example, the number of cubic fields of discriminant up to $\Bc$ is

    \vspace{-.13in}

    \begin{equation*}\label{eq:cubic-fields}
    a\Bc + b\Bc^{5/6} + \mathrm{lower~order~terms}
    \end{equation*}

    \par We have very little understanding of these lower order terms. They are not just of theoretical interest: when one tries to verify the conjectures numerically, one finds that the secondary terms are dominant in the computational range. 

    %Following the framework of the Weil conjectures, they do not correspond to stable cohomology classes (these are the main terms), but to some kind of slightly weaker structure, which is still much better behaved than cohomology near middle degree. 

    \medskip

    \par Note that the moduli functors we wish to enumerate are often represented by algebraic stacks rather than by schemes (or algebraic spaces) due to the presence of non-trivial automorphisms of the objects we wish to parameterize. If we consider a finite field analogue, the traditional approaches to count the number of rational points on the moduli spaces do not render every lower order terms. This is because the Grothendieck-Lefschetz trace formula (relating point counts and $\ell$-adic cohomologies) for algebraic stacks as in \cite{Behrend} counts the rational points with weights (given a rational point $x$, its weight is $\frac{1}{\Aut(x)}$). Instead, we must acquire the number $|\Xc(\Fb_q)/\sim|$ of $\Fb_q$--isomorphism classes of $\Fb_q$--points of the algebraic stack $\Xc$, i.e., the non--weighted point count of $\Xc$ over $\Fb_q$. In this regard, the coarse moduli space $c: \Xc \to X$ is insufficient as $|X(\Fb_q)| \neq |\Xc(\Fb_q)/\sim|$.

    \medskip

    \par This discrepancy naturally raises the following question:

    \vspace{-.1in}

    \begin{center}
    \textit{Which arithmetic invariant of a specific geometric object $\Yc$ is equal to the \\ non--weighted point count $|\Xc(\Fb_q)/\sim|$ of the algebraic stack $\Xc$ over $\Fb_q$?}
    \end{center}

    \vspace{-.1in}

    \par We clarify the arithmetic role of the \textit{inertia stack} $\Ic(\Xc)$ of an algebraic stack $\Xc$ over $\Fb_q$ which parameterizes pairs $(x,\xi)$, where $x \in \Xc$ and $\xi$ is the conjugate class of $g \in \Aut(x)$.

    \begin{thm}\label{thm:ptcounts}
        Let $\Xc$ be an algebraic stack over $\Fb_q$ of finite type with quasi-separated finite type diagonal and let $\Ic(\Xc)$ be the inertia stack of $\Xc$. Then, 
        $$ |\Xc(\Fb_q)/\sim| = \#_q(\Ic(\Xc)) $$ 
        where $\#_q(\Ic(\Xc))$ is the weighted point count of the inertia stack $\Ic(\Xc)$ over $\Fb_q$.
    \end{thm}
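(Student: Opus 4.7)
The plan is to unwind the definitions and then reduce to a standard application of the orbit–stabilizer theorem (equivalently, Burnside's counting lemma) applied to each automorphism group $\Aut_{\Fb_q}(x)$. The finite type and quasi-separated finite type diagonal hypotheses ensure that both the set $\Xc(\Fb_q)/\!\sim$ and each automorphism group scheme $\underline{\Aut}(x)$ are of finite type over $\Fb_q$, so in particular $|\Aut_{\Fb_q}(x)| < \infty$. This finiteness is what makes the weighted counts well-defined.

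First, I would identify the $\Fb_q$-isomorphism classes of $\Ic(\Xc)$ explicitly. An object of $\Ic(\Xc)(\Fb_q)$ is a pair $(x, g)$ with $x \in \Xc(\Fb_q)$ and $g \in \Aut_{\Fb_q}(x)$; an isomorphism $(x,g) \to (y,h)$ is an isomorphism $\phi \colon x \to y$ satisfying $\phi g \phi^{-1} = h$. Hence the $\Fb_q$-isomorphism classes are parameterized by pairs $([x], [g])$ where $[x]$ runs over $\Xc(\Fb_q)/\!\sim$ and $[g]$ runs over conjugacy classes in $\Aut_{\Fb_q}(x)$. Moreover the automorphism group of $(x, g)$ in $\Ic(\Xc)$ is precisely the centralizer $C_{\Aut_{\Fb_q}(x)}(g)$. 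I would verify these standard identifications using the fiber product description $\Ic(\Xc) = \Xc \times_{\Xc \times \Xc} \Xc$ of the inertia stack.

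Next, I would write the weighted point count of the inertia stack using Behrend's definition,
\begin{equation*}
\#_q(\Ic(\Xc)) \;=\; \sum_{[x] \in \Xc(\Fb_q)/\sim} \; \sum_{[g] \subset \Aut_{\Fb_q}(x)} \frac{1}{|C_{\Aut_{\Fb_q}(x)}(g)|},
\end{equation*}
and then apply the orbit–stabilizer theorem to each finite group $G = \Aut_{\Fb_q}(x)$: since $|[g]| = |G|/|C_G(g)|$, the inner sum collapses as
\begin{equation*}
\sum_{[g] \subset G} \frac{1}{|C_G(g)|} \;=\; \frac{1}{|G|}\sum_{[g] \subset G} |[g]| \;=\; \frac{|G|}{|G|} \;=\; 1.
\end{equation*}
Therefore $\#_q(\Ic(\Xc)) = \sum_{[x]} 1 = |\Xc(\Fb_q)/\!\sim|$, as desired.

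The main obstacle is not the combinatorics but the bookkeeping in step one: making sure that the groupoid-theoretic description of $\Ic(\Xc)(\Fb_q)$ as pairs $(x,g)$ modulo conjugation, together with the centralizer identification of its automorphism groups, holds in the requisite generality. Once this is established from the fiber product definition of the inertia stack and the finiteness afforded by the diagonal hypotheses, the remainder is a one-line application of Burnside's lemma.
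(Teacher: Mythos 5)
Your proof is correct and takes essentially the same route as the paper: the paper localizes to the substack $\Ic(\Xc)_{\overline{x}}$ over each isomorphism class $[x]$, identifies it with $[\underline{\Aut}_x/\underline{\Aut}_x]$ under conjugation, observes that its $\Fb_q$-points modulo isomorphism are conjugacy classes with automorphism group the centralizer, and applies the orbit--stabilizer theorem; your double-sum computation is the same calculation without the substack scaffolding.
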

    
    \par Before drawing the connection of this Theorem to Venkatesh's question, let us first consider a simpler problem instead where we want to find the non-weighted point count $|\Xc(\Fb_q)/\sim|$ of a Deligne-Mumford moduli stack $\Xc/\Fb_q$ of finite type with affine diagonal. In a given counting problem of number theory, one must be aware of the discriminant involved as the relevant moduli stack $\Xc$ is often not quasi-compact (so cannot be of finite type), but is rather a disjoint union of connected or irreducible components $\Xc_{\Bc}$ of finite type, indexed by ranges of values $0< ht(\Delta) \le \Bc$ of height of discriminant up to $\Bc$. In this regard Venkatesh's question over finite fields is then equivalent to understanding the lower order terms of the counting function $N(\Bc)$ as a function of the bounded height $\Bc$

    %On one hand, Theorem~\ref{thm:ptcounts} gives us the exact answer as a single number $\#_q(\Ic(\Xc))$, so we cannot immediately distinguish the leading and lower order terms. On the other hand, in a given counting problem of number theory (e.g., enumeration of number fields or arithmetic curves over a number field), one must be aware of the discriminant involved. In some of these cases, 

    \begin{align*}
        &N(\Bc):=|\hat \Xc_{\Bc}(\Fb_q)/\sim| = \#_q(\Ic(\hat \Xc_{\Bc})) = \sum_{\Bc' \le \Bc} \#_q(\Ic(\Xc_{\Bc'})), \;\;\; \hat \Xc_{\Bc}:=\bigsqcup_{\Bc' \le \Bc} \Xc_{\Bc'}
    \end{align*}

    Therefore, the lower order terms of $N(\Bc)$ are determined by the growth pattern of $\#_q(\Ic(\hat \Xc_{\Bc}))$ with respect to $\Bc$. Here, we note that the geometry of $\Ic(\Xc)$ can be quite complicated. For example, even if $\Xc$ is irreducible, $\Ic(\Xc)$ can be disconnected, with many irreducible components of different dimensions corresponding to different automorphisms. Also, $\Ic(\Xc)$ may have intersecting irreducible components which are possibly singular. And crucially, $\Ic(\Xc)$ could contain lower-dimensional irreducible components (non-existent on either $\Xc$ or $X$) which will contribute to various lower order terms. Coming back to understanding the algebro-topological meaning of the lower order terms of $N(\Bc)$, we see that the weighted point count of the inertia stack $\#_q(\Ic(\hat \Xc_{\Bc}))$ over $\Fb_q$ is naturally equal to the alternating sum of trace of geometric Frobenius via the Grothendieck-Lefschetz trace formula for algebraic stacks as in Theorem~\ref{SBGLTF} by \cite{Behrend,LO,Sun}. 

    %Nevertheless, it is crucial to consider $\Ic(\Xc)$ as we aim to understand the nature of every lower order terms in exact enumeration $|\Xc(\Fb_q)/\sim|$ and 

    \begin{align*}
        N(\Bc)=\sum\limits_{i=0}^{2\dim\Ic(\hat \Xc_{\Bc})}~(-1)^i \cdot tr\big( \Frob^*_q : H^i_{\et, c}(\Ic(\hat \Xc_{\Bc})_{/\Fqbar};\Ql) \to H^i_{\et, c}(\Ic(\hat \Xc_{\Bc})_{/\Fqbar};\Ql) \big)
    \end{align*}

    It is standard to consider the natural grading determined by degree $i$ of compactly-supported cohomologies; then, the top degree cohomology (when $i=2\dim\Ic(\Xc)$) can be interpreted as the main leading term and the rest of the lower order terms of $N(\Bc)$ corresponds to the lower degree, compactly-supported, $\ell$-adic cohomologies of $\Ic(\hat \Xc_{\Bc})$ with geometric Frobenius weights. A priori, however, the general mechanism that precisely determines which connected component(s) of $\Ic(\hat \Xc_{\Bc})$ contribute(s) to a given lower order term of a specific order remains unclear without fixing the counting/moduli problem $\hat \Xc_{\Bc}$ and studying the arithmetic geometry of $\Ic(\hat \Xc_{\Bc})$ with regard to $\#_q(\Ic(\hat \Xc_{\Bc})) = |\hat \Xc_{\Bc}(\Fb_q)/\sim|$. 

    \medskip

    In essence, this analysis, which is an extension of the framework of the Weil conjectures to the rational points of inertia stacks of arithmetic moduli stacks, provides a partial answer to the question of Venkatesh through the lower-dimensional irreducible components of $\Ic(\Xc)$ corresponding to different conjugacy classes of automorphisms as in Definition~\ref{def:decomp_conjugate}.

    \medskip

    \par Due to the inherent complexity of inertia stacks in general, we instead focus on irreducible algebraic stacks $\Xc$ of finite type (with conditions on the diagonal). Furthermore, we restrict to the case when $\Xc \cong [U/G]$ is a quotient stack, which is a testing ground for the strategy above. Then, the inertia stack $\Ic(\Xc)$ turns out to be a quotient stack as well, of the form $[R_{\Delta}/G]$ (see Corollary~\ref{cor:presentation_inertia}). If $\Xc$ is furthermore Deligne-Mumford with affine diagonal, then $\Ic(\Xc)$ decomposes into a disjoint union of $\Xc$ and other components, which are fixed loci of nontrivial elements of $G$ (see \eqref{eq:decomp_order} and Definition~\ref{def:decomp_conjugate} for more details). By using the idea of cut-and-paste by Grothendieck in $K_0(\mathrm{Stck}_K)$ (this is a natural generalization of the Grothendieck ring of varieties, see Definition~\ref{defn:GrothringStck}), we acquire the motive $\{\Ic(\Xc)\}$ which renders $\#_q(\Ic(\Xc))$ to be a polynomial in $q$ through the decomposition of $\Ic(\Xc)$ whenever every piece is reasonably simple (For instance, when $G=\Gb_m$ then the motive $\{\Ic(\Xc)\}=\{R_{\Delta}\}/\{G\}$ by Lemma~\ref{lem:Gm_quot}).

    \medskip

    \par As an application of this strategy, we consider the Hom stack $\Hom_n(\Pb^1,\Pcv)$ parameterizing the degree $n \in \Zb_{\geq 1}$ morphisms $f:\Pb^1 \rightarrow \Pcv$ of rational curves on a weighted projective stack $\Pcv$ (see Definition~\ref{def:wtproj}) with $f^*\Oc_{\Pcv}(1) \simeq \Oc_{\Pb^1}(n)$. Since the Hom stacks are quotient stacks by Remark~\ref{rmk:Hom_substack_wtproj}, the strategy works out nicely and we prove that the exact weighted point count $\#_q\left(\Ic\left(\Hom_n(\Pb^1,\Pcv)\right)\right)$ of the inertia stack over $\Fb_q$ can be acquired to be a polynomial in $q$ which in turn provides the exact non--weighted point count $\left|\Hom_n(\Pb^1,\Pcv)(\Fb_q)/\sim\right|$ of the Hom stack over $\Fb_q$ by Theorem~\ref{thm:ptcounts}. Hom stacks are important classes of Deligne-Mumford stacks as numerous arithmetic moduli problems can naturally be approximated (if not identified) to weighted projective stacks or Hom stacks under mild condition on the characteristic of the base field $K$. For example, both authors in \cite{HP} showed that $\Lc_{1,12n} := \Hom_n(\Pb_{\Fb_q}^1,\Pc_{\Fb_q}(4,6))$ represents the moduli stack of stable elliptic fibrations over $\Pb^1_{\Fb_q}$ with discriminant degree $12n$ (as $(\Me)_{\Fb_q} \iso \Pc_{\Fb_q}(4,6)$ is the moduli stack of stable elliptic curves when $2,3 \nmid q$), and computed the exact non-weighted point count $|\Lc_{1,12n}(\Fb_q)/\sim|$ over $\Fb_q$ by the motive $\{\Lc_{1,12n}\} \in K_0(\mathrm{Stck}_K)$ (see also \cite{PS}).

    \medskip
    
    \par For the moduli stack of genus $g \ge 2$ fibrations over $\Pb^1_{\Fb_q}$, however, it is difficult to acquire the arithmetic invariants of $\Ic\left(\Hom(\Pb^1, \Mg_g)\right)$ due to the global geometry of the Deligne--Mumford moduli stack $\Mg_g$ of stable genus $g$ curves formulated in \cite{DM}. For example, the coarse moduli space $\overline{M}_g$ is of general type for $g \ge 24$ by the fundamental works of Harris, Mumford and Eisenbud in \cite{HM, EH} which in turn makes the study of (rational) curves on $\Mg_{g \ge 24}$ ineffective for counting stable curves of sufficiently high genus over $\Pb^1_{\Fb_q}$. Instead, we consider the following strategy:
 
    \begin{center}
    \textit{Could we approximate $\Mg_g$ by $\Pc(\vec{\lambda_g})$ and show that the non-weighted point count of the Hom stack $\Hom_{n}(\Pb^1,\Pc(\vec{\lambda_g}))$ is an upper bound for the non-weighted point count of the Moduli stack $\Hom_{n}(\Pb^1,\Mg_g)$ of stable genus $g \ge 2$ fibrations over $\Pb^1_{\Fb_q}$?}
    \end{center}

    \par Remarkably, the strategy can be executed successfully if we restrict to hyperelliptic genus $g \ge 2$ curves\footnote{~However, similar to elliptic curves, due to the presence of generic non-trivial automorphism of \textit{hyperelliptic involution} the fine moduli space for hyperelliptic curves does not exist and we must work with the fine moduli \textit{stack} especially for the existence of the universal family of hyperelliptic curves.}. Firstly, all smooth genus 2 curves are hyperelliptic, thus $\M_2 \iso \Hc_2$. In general, recall that an \textit{odd--degree} hyperelliptic curve has a marked rational Weierstrass point at $\infty$. In this paper, we will concentrate on the moduli substack $\Hc_{g,\underline{1}} \subset \M_{g,1}$ of hyperelliptic genus $g \ge 2$ curves with $1$ marked rational Weierstrass point (which has the same dimension as $\Hc_{g}$) as we focus on counting odd--degree hyperelliptic genus $g \ge 2$ curves. Since $\Hc_{g,\underline{1}}$ is not proper, we consider the proper moduli stack $\Hg_{g,\underline{1}}:=\overline{\Hc_{g,\underline{1}}} \subset \Mg_{g,1}$ (meaning the reduced closure) of stable odd--degree hyperelliptic curves. Similar to $\Mg_g$, extracting the exact arithmetic invariants of $\Hom_n(\Pb^1, \Hg_{g,\underline{1}})$ is challenging, so we consider (upto some conditions on characteristic of $\Fb_q$) a different extension of smooth odd--degree hyperelliptic curves such that the compactified moduli stack is a weighted projective stack, originally introduced as a special case of \cite[Definition 2.5]{Fedorchuk}:
    
    \begin{defn}\label{def:qadm_hyp}
        Fix an integral reduced $K$-scheme $B$, where $\mathrm{char}(K) \neq 2$. A flat family $u:C \rightarrow B$ of genus $g \ge 2$ curves is \emph{quasi--admissible} if every geometric fiber has at worst $A_{2g-1}$-singularity (i.e., \'etale locally defined by $x^2+y^{m}$ for some $0< m \le 2g$), and factors through a separable morphism $\phi:C \to H$ of degree 2 where $H$ is a $\Pb^1$-bundle over $B$ with a distinguished section (often called $\infty$) which is a connected component of the branch locus of $u$.
    \end{defn}
    
    \par The notion of quasi--admissible covers whereby the general member of $C$ is \textit{not} an admissible cover of $\Pb^1$ is natural and have been studied in depth by \cite[\S 2.4.]{Stankova} as the closest covers to the original families of stable curves. For example, if $\mathrm{char}(K) > 2g+1$ or $0$, then a quasi--admissible curve over any $K$-scheme $B$ can be written as an odd--degree hyperelliptic curve via generalized Weierstrass equation:
    \begin{equation}\label{eq:qadm_Weier}
    y^2 = f(x) = x^{2g+1} + a_{4}x^{2g-1} + a_{6}x^{2g-2} + a_{8}x^{2g-3} + \cdots + a_{4g+2},
    \end{equation}
    where $a_i$'s are appropriate sections of suitable line bundles on $B$ where not all of them simultaneously vanish at anywhere on $B$. Here, we identify the section at $\infty$ as the locus missed by the above affine equation. This identification is a consequence of Proposition~\ref{prop:moduli_qadm_curve}, where we show that the Deligne--Mumford moduli stack $\mathcal{H}_{2g}[2g-1]$ of quasi--admissible curves of genus $g$ is isomorphic to the weighted projective stack $\Pc(\vec{\lambda_g})$ for $\vec{\lambda_g} := (4,6,8,\dotsc,4g+2)$ over base field $K$ with $\mathrm{char}(K)=0$ or $> 2g+1$. Assigning $\Hc_{2g}[2g-1]$ as the target stack which naturally carries the universal family, we can now formulate the moduli stack $\Lc_{g}$ of quasi--admissible hyperelliptic genus $g$ fibrations with a marked Weierstrass section.
    
    \begin{prop}\label{prop:moduli_qadm_fib}
        Assume $\mathrm{char}(K) = 0$ or $> 2g+1$. Then, the moduli stack $\Lc_{g}$ of quasi--admissible odd--degree hyperelliptic genus $g$ fibrations over $\Pb^{1}$ with a marked Weierstrass section is the tame Deligne--Mumford stack $\Hom_{>0}(\Pb^1,\Hc_{2g}[2g-1])$ parameterizing the $K$-morphisms $f:\Pb^1 \rightarrow \Hc_{2g}[2g-1]$ with $\deg f^*\Oc_{\Hc_{2g}[2g-1]}(1)>0$.
    \end{prop}

    Above proposition shows that $\Lc_{g}$ is a well-behaving object parametrizing quasi--admissible curves of genus $g$ over $\Pb^1_K$. The proposition below signifies the importance of this stack in regard to understanding the moduli of stable odd--degree hyperelliptic genus $g$ curves over $\Pb^1_K$ (with smooth generic fiber):

    \begin{thm} \label{thm:hypell_surf_and_qadm_fib}
      Fix a base field $K$ with $\mathrm{char}(K) > 2g+1$. Then there is a canonical fully faithful functor of groupoids $\mathcal F: \mathcal{S}_g(K) \rightarrow \Lc_g(K)$ from the groupoid $\mathcal{S}_g(K)$ of stable odd--degree hyperelliptic genus $g \ge 2$ curves over $\Pb^1_{\Fb_q}$ with a marked Weierstrass point and generically smooth fibers to $\Lc_g(K)$.
    \end{thm}

    %This is the desired approximation of $\overline{\Hc}_{g,\underline{1}}$ as the set of isomorphism classes of quasi--admissible curves of genus $g$ over $\Pb^1_K$ contains the set of isomorphism classes of stable odd--degree hyperelliptic genus $g$ curves over $\Pb^1_K$ (with smooth generic fiber) by working out the birational geometry of surfaces over $\mathrm{char}(K)=0$ as well as $\mathrm{char}(K) > 2g+1$. 

    \par To effectively count the non--weighted $\Fb_q$--points of the moduli stack $\Lc_g$, we need to impose a notion of \textit{bounded height} on those $\Fb_q$--points. Thanks to the works of Lockhart and Liu, we have a natural definition (see Definition~\ref{Hyp_Disc}) of a hyperelliptic discriminant $\Delta_g$ of quasi--admissible curves as in \cite{Lockhart, Liu2}. It is a homogeneous polynomial of degree $4g(2g+1)$ on variables $a_i$'s, where each $a_i$ has degree $i$ ($a_i$'s are as in equation~\eqref{eq:qadm_Weier} where $B=\Pb^1_{\Fb_q}$ in this case). Moreover, since $\Pc(4,6,8,\dotsc,4g+2)$ carries a primitive ample line bundle $\Oc_{\Pc(4,6,8,\dotsc,4g+2)}(1)$, the degree of the discriminant $\Delta_g$ of a given quasi--admissible fibration $f:\Pb^1 \rightarrow \mathcal{H}_{2g}[2g-1] \cong \Pc(4,6,8,\dotsc,4g+2)$ is equal to $4g(2g+1)n$ where $f^*\Oc_{\Pc(4,6,8,\dotsc,4g+2)}(1) \cong \Oc_{\Pb^1}(n)$. Therefore, Hom stack $\Hom_n(\Pb^1,\Pc(4,6,8,\dotsc,4g+2))$ parameterizing such morphisms is the moduli stack $\Lc_{g, |\Delta_{g}| \cdot n}$ of quasi--admissible genus $g \ge 2$ fibrations of a fixed discriminant degree $|\Delta_{g}| \cdot n =4g(2g+1)n$. Consequently, we acquire the exact weighted point count $\Ic\left(\Lc_{g, |\Delta_{g}| \cdot n}\right)$ over $\Fb_q$ which is equal to the exact non--weighted point count $\left|\Lc_{g, |\Delta_{g}| \cdot n}(\Fb_q)/\sim\right|$ over $\Fb_q$ by Theorem~\ref{thm:ptcounts}.

    \begin{thm} \label{thm:low_genus_count} 
    If $\mathrm{char}(\Fb_q)> 2g+1$, the number $\left|\Lc_{g, |\Delta_{g}| \cdot n}(\Fb_q)/\sim\right|$ of $\Fb_q$--isomorphism classes of quasi--admissible odd--degree hyperelliptic genus $g$ fibrations over $\Pb^1_{\Fb_q}$ with a marked Weierstrass point and hyperelliptic discriminant of degree $|\Delta_{g}| \cdot n = 4g(2g+1)n$ is equal to 
        \begingroup
        \allowdisplaybreaks
        \begin{align*}
            \left|\Lc_{2,40n}(\Fb_q)/\sim\right|& = 2 \cdot q^{28n} \cdot (q^{3} + q^{2} + q^{1} - q^{-1} - q^{-2} - q^{-3}) + \delta(4,q-1) \cdot 2 \cdot q^{12n} \cdot (q^{1}-q^{-1})\\\\
            \left|\Lc_{3,84n}(\Fb_q)/\sim\right|& = 2 \cdot q^{54n} \cdot (q^{5} +  \dotsb + q^{1} - q^{-1}  - \dotsb - q^{-5})\\
            &\phantom{=}\:+ \delta(4,q-1) \cdot 2 \cdot q^{24n} \cdot (q^{2} + q^{1} - q^{-1} - q^{-2}) + \delta(6,q-1) \cdot 4 \cdot q^{18n} \cdot (q^{1} - q^{-1})\\\\
            \left|\Lc_{4,144n}(\Fb_q)/\sim\right|& = 2 \cdot q^{88n} \cdot (q^{7} + \dotsb + q^{1} - q^{-1} - \dotsb - q^{-7}) \\
            &\phantom{=}\: + \delta(4,q-1) \cdot 2 \cdot q^{40n} \cdot (q^{3} + q^{2} + q^{1} - q^{-1} - q^{-2} - q^{-3})\\
            &\phantom{=}\: + \delta(6,q-1) \cdot 4 \cdot q^{36n} \cdot (q^{2} + q^{1} - q^{-1} - q^{-2}) + \delta(8,q-1) \cdot 4 \cdot q^{24n} \cdot (q^{1} - q^{-1})
        \end{align*}
        \endgroup
    where
    \[
        \delta(a,b):=
        \begin{cases}
            1 & \text{if } a \text{ divides } b,\\
            0 & \text{otherwise.}
        \end{cases}
    \]
    For genus $g \ge 5$, the corresponding exact non--weighted point count $\left|\Lc_{g, |\Delta_{g}| \cdot n}(\Fb_q)/\sim\right|$ of the moduli stack $\Lc_{g, |\Delta_{g}| \cdot n}$ over $\Fb_q$ can be similarly worked out.
    \end{thm}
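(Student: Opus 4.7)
The plan is to apply Theorem~\ref{thm:ptcounts} to reduce the claimed count to the weighted count $\#_q(\Ic(\Lc_{g,|\Delta_g|\cdot n}))$, and then compute the latter via an explicit decomposition of the inertia stack. By Proposition~\ref{prop:moduli_qadm_curve} we identify $\Lc_{g,|\Delta_g|\cdot n} \cong \Hom_n(\Pb^1,\Pcv)$ with $\vec{\lambda} = \vec{\lambda_g} = (4,6,\dotsc,4g+2)$, and by Remark~\ref{rmk:Hom_substack_wtproj} we realize this Hom stack as the quotient $[U_n/\Gb_m]$, where $U_n$ is the open complement in $\prod_i H^0(\Pb^1,\Oc_{\Pb^1}(\lambda_{g,i}n))$ of the locus of sections with a common zero on $\Pb^1$. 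Corollary~\ref{cor:presentation_inertia} then yields the presentation $\Ic(\Lc_{g,|\Delta_g|\cdot n}) \cong [R_\Delta/\Gb_m]$.

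Next I would stratify $R_\Delta$ by the multiplicative order $r$ of the group element $\xi \in \Gb_m$: a primitive $r$-th root of unity $\xi$ acts on the $i$-th factor of $\prod_i H^0(\Pb^1,\Oc_{\Pb^1}(\lambda_{g,i}n))$ by $\xi^{\lambda_{g,i}}$, so its fixed locus consists of tuples $(s_i)$ with $s_i \equiv 0$ whenever $r \nmid \lambda_{g,i}$. Via Lemma~\ref{lem:Gm_quot}, the $\Gb_m$-quotient of this stratum is $\Hom_n(\Pb^1,\Pc(\vec{\lambda_{g,r}}))$, where $\vec{\lambda_{g,r}}$ is the subtuple of $\vec{\lambda_g}$ consisting of weights divisible by $r$. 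Since $\Gb_m$ is abelian, each primitive $r$-th root of unity forms its own conjugacy class, so aggregating over the $\phi(r)$ primitive roots yields the motivic identity
\[
\{\Ic(\Lc_{g,|\Delta_g|\cdot n})\} = \sum_{r \ge 1} \phi(r) \cdot \{\Hom_n(\Pb^1,\Pc(\vec{\lambda_{g,r}}))\} \in K_0(\mathrm{Stck}_{\Fb_q}),
\]
with $\phi$ Euler's totient, and only those $r$ for which $\vec{\lambda_{g,r}}$ has length at least $2$ contribute nontrivially (since $\Pc(m)=B\mu_m$ admits no positive-degree map from $\Pb^1$). For $g \in \{2,3,4\}$ this leaves a short finite list; e.g.\ for $g=3$ the surviving orders are $r \in \{1,2\}$ (combined multiplicity $2$, stratum $\Pc(\vec{\lambda_3})$), $r=4$ (multiplicity $2$, stratum $\Pc(4,8,12)$), and $r \in \{3,6\}$ (combined multiplicity $\phi(3)+\phi(6)=4$, stratum $\Pc(6,12)$); the cases $g=2,4$ are handled analogously.

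Finally, I would specialize the closed formula
\[
\#_q\bigl(\Hom_n(\Pb^1,\Pc(\vec{\mu}))\bigr) = q^{n\sum_i \mu_i} \cdot \sum_{i=1}^{k-1}\bigl(q^i - q^{-i}\bigr) \qquad (k = \text{length of } \vec{\mu} \ge 2),
\]
obtained from the $[U/\Gb_m]$ presentation as in \cite{HP}, to each stratum. Whenever $r \nmid q-1$ the primitive $r$-th roots of unity are not $\Fb_q$-rational, so after Galois descent the corresponding inertia stratum has no $\Fb_q$-rational automorphism element and contributes $0$ to $\#_q(\Ic(\Lc_{g,|\Delta_g|\cdot n}))$; this produces the $\delta(r,q-1)$ prefactors. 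The hypothesis $\mathrm{char}(\Fb_q) > 2g+1 \ge 5$ forces $q$ odd and hence $2 \mid q-1$, so the $r \in \{1,2\}$ term is unconditional; likewise $\delta(3,q-1) = \delta(6,q-1)$, allowing the $r \in \{3,6\}$ contribution to be packaged under a single $\delta(6,q-1)$, with analogous consolidation for $g=4$. Summing the surviving terms yields the stated expressions. The main obstacle is verifying the identification of each fixed-point substack $[\Fix_{U_n}(\xi)/\Gb_m]$ with the smaller Hom stack $\Hom_n(\Pb^1,\Pc(\vec{\lambda_{g,r}}))$ and tracking the Galois descent of conjugacy classes to $\Fb_q$; once these are settled, the remainder is routine bookkeeping.
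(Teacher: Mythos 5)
Your overall strategy is the same as the paper's: apply Theorem~\ref{thm:ptcounts} to reduce the non-weighted count to $\#_q$ of the inertia stack, decompose the inertia stack of $\Hom_n(\Pb^1,\Pc(\vec\lambda_g))$ into Hom stacks to smaller weighted projective stacks, and plug in Corollary~\ref{cor:maincount}; the paper packages this as Propositions~\ref{prop:Hom_inertia_decomp}, \ref{prop:motive_inertia} and \ref{prop:algo_ptcount}, which you are essentially re-deriving, and your list of contributing orders $r$ agrees with Table~\ref{tab:list_subtuple_qadm}.

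There is, however, a flaw in the displayed motivic identity. As written,
\[
\{\Ic(\Lc_{g,|\Delta_g|\cdot n})\} = \sum_{r \ge 1} \varphi(r) \cdot \{\Hom_n(\Pb^1,\Pc(\vec\lambda_{g,r}))\} \in K_0(\mathrm{Stck}_{\Fb_q})
\]
is false whenever some contributing $r$ fails to divide $q-1$. The correct decomposition (Proposition~\ref{prop:motive_inertia}) is indexed by closed points $g$ of $(\Gb_m)_{\Fb_q}$, and the summand attached to $g$ is the class of the $\kappa(g)$-stack $\Hom_n(\Pb^1_{\kappa(g)},\Pc_{\kappa(g)}(\vec\lambda_{I_g}))$, pushed forward to $K_0(\mathrm{Stck}_{\Fb_q})$ along $\Spec\kappa(g) \to \Spec\Fb_q$. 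If $r\nmid q-1$ the primitive $r$-th roots of unity collect into Galois orbits of size $[\kappa(g):\Fb_q]>1$, so there are only $\varphi(r)/[\kappa(g):\Fb_q]$ closed points of order $r$, each contributing a $\kappa(g)$-stack rather than an $\Fb_q$-stack. Applying $\#_q$ to the two sides of your identity therefore gives different numbers: the $\kappa(g)$-summands satisfy $\#_q=0$ since $\Spec\kappa(g)$ has no $\Fb_q$-points, whereas $\#_q\bigl(\{\Hom_n(\Pb^1_{\Fb_q},\Pc_{\Fb_q}(\vec\lambda_{g,r}))\}\bigr)\neq 0$. Your later remark that the $r\nmid q-1$ strata contribute $0$ is correct as an argument directly on the strata, and it is precisely what generates the $\delta(r,q-1)$ factors, but it contradicts the identity you displayed rather than following from it. Once the decomposition is restated at the level of closed points, as in Proposition~\ref{prop:algo_ptcount}, it yields $\sum_{r\mid q-1}\varphi(r)\,\#_q(\Hom_n(\Pb^1,\Pc(\vec\lambda_{g,r})))$, and from there your bookkeeping (the length-$\ge 2$ condition, the closed formula for $\#_q$ of a Hom stack, and the consolidation of $r=3$ and $r=6$ using that $q$ is odd) is sound and coincides with the paper's.
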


    \par Given an odd--degree hyperelliptic genus $g \ge 2$ curve $X$ over $\Pb^1_{\Fb_q}$ with $\text{char} (\Fb_q) > 2g+1$, define the \emph{height} of the hyperelliptic discriminant $\Delta_g(X)$ to be $ht(\Delta_g(X)):=q^{\deg \Delta_g(X)}=q^{4g(2g+1)n}$ (see Definition~\ref{def:height_Disc}). Then, we define the counting function $\Zc_{g, \Fb_q(t)}(\Bc)$ as
    %for a positive real number $\Bc$, 
    \[\Zc_{g, \Fb_q(t)}(\Bc) := |\{\text{Quasi--admissible odd--degree hyperelliptic curves over } \Pb^1_{\Fb_q} \text{ with } 0<ht(\Delta_{g}) \le \Bc\}|\] 

    %\[\Zc_{g, \Fb_q(t)}(\Bc) := \\ |\{\text{Quasi--admissible odd hyperelliptic } g \text{ curves over } \Pb^1_{\Fb_q} \text{ with } 0<ht(\Delta_{g}) \le \Bc\}|\] 
    
    \smallskip

    We acquire the following sharp enumerations via Theorem~\ref{thm:low_genus_count} in \S\ref{sec:globfield}

    \begin{Mthm} [Sharp enumeration on $\Zc_{g, \Fb_q(t)}(\Bc)$] \label{Mthm:g_sharper_asymp}
        If $\text{char} (\Fb_q) > 2g+1$, then the function $\Zc_{g, \Fb_q(t)}(\Bc)$, which counts the number of quasi--admissible odd--degree hyperelliptic genus $g \ge 2$ curves $X$ over $\Pb^1_{\Fb_q}$ ordered by $0< ht(\Delta_{g}(X)) = q^{4g(2g+1)n} \le \Bc$, satisfies:
        \begingroup
        \allowdisplaybreaks
        \begin{align*}
         \Zc_{2, \Fb_q(t)}(\Bc) &\le a_{2,2} \cdot  \Bc^{\frac{7}{10}} + a_{2,4} \cdot  \Bc^{\frac{3}{10}} + b_{2,4} \\
         \Zc_{3, \Fb_q(t)}(\Bc) &\le a_{3,2} \cdot  \Bc^{\frac{9}{14}} + a_{3,4} \cdot  \Bc^{\frac{2}{7}} + a_{3,6} \cdot \Bc^{\frac{3}{14}} + b_{3,6} \\
         \Zc_{4, \Fb_q(t)}(\Bc) &\le a_{4,2} \cdot  \Bc^{\frac{11}{18}} +  a_{4,4} \cdot \Bc^{\frac{5}{18}} + a_{4,6} \cdot \Bc^{\frac{1}{4}} + a_{4,8} \cdot \Bc^{\frac{1}{6}} + b_{4,8}
        \end{align*}
        \endgroup
        which is an equality when $\Bc = q^{4g(2g+1)n}$ with $n \in \mathbb{Z}_{\geq 1}$ implying that each upper bound is the sharp enumeration, i.e., the upper bound is equal to the function at infinitely many values of $\Bc \in \Zb_{\geq 1}$. Also $\delta(a,b)$ is as in Theorem~\ref{thm:low_genus_count}, and for each $g,m \in \N_{\ge 2}$, $a_{g,2}(q), a_{g,2m}(q,\delta(2m,q-1))$, and $b_{g,2m}(q,\delta(4,q-1),\delta(6,q-1),\dotsc,\delta(2m,q-1))$ are explicit rational functions as in Theorem~\ref{thm:g_sharper_asymp_full}.
    \end{Mthm} 

    \par For higher genus $g \ge 5$, the sharp enumeration on $\Zc_{g, \Fb_q(t)}(\Bc)$ rendering a closed-form formula with precise lower order terms can be similarly worked out through Theorem~\ref{thm:g_sharper_asymp_full}.

    \begin{note}\label{note:lower_order}
    When $g=2$ and $\delta(4,q-1)=1$, the secondary term $a_{2,4} \cdot  \Bc^{\frac{3}{10}}$ of $\Zc_{2, \Fb_q(t)}(\Bc)$ is induced purely from the lower dimensional irreducible component $\Hom_n(\Pb^1,\Pc(4,8)) \subsetneq \Ic\left(\Hom_n(\Pb^1,\Pc(4,6,8,10))\right)$ corresponding to $\mu_4$ generic stabilizer (see Proposition~\ref{prop:algo_ptcount} and Theorem~\ref{thm:g_sharper_asymp_full}). 
    We also note that the lower order term $b_{2,4}$ of zeroth order is a sum of contributions from both the higher dimensional irreducible component $\Hom_n(\Pb^1,\Pc(4,6,8,10)) \subsetneq \Ic\left(\Hom_n(\Pb^1,\Pc(4,6,8,10))\right)$ and the lower dimensional irreducible component $\Hom_n(\Pb^1,\Pc(4,8)) \subsetneq \Ic\left(\Hom_n(\Pb^1,\Pc(4,6,8,10))\right)$.
    The emergence of lower order terms illustrated here continues similarly for higher genus $g \ge 3$ case $\Zc_{g, \Fb_q(t)}(\Bc)$ with $\delta(r,q-1)$ for each irreducible component with $\mu_r$ generic stabilizer. 

    %This gives the partial answer to function field analog of Venkatesh's question for counting stable curves of genus $g \ge 2$.

    \end{note}

    As we have seen by Theorem~\ref{thm:hypell_surf_and_qadm_fib}, whenever $\mathrm{char}(\Fb_q)>2g+1$, counting the number $\Zc_{g,\Fb_q(t)}(\Bc)$ of quasi--admissible odd--degree hyperelliptic genus $g \ge 2$ curves over $\Pb^1_{\Fb_q}$ renders an upper bound for counting the number $\Zc'_{g,\Fb_q(t)}(\Bc)$ of stable odd hyperelliptic genus $g \ge 2$ curves over $\Pb^1_{\Fb_q}$ with generically smooth fibers. That is,

    \begin{equation}
    \Zc'_{g,\Fb_q(t)}(\Bc) \le \Zc_{g,\Fb_q(t)}(\Bc)
    \end{equation}

    \par Using this, we obtain another application regarding the enumeration of abelian varieties of dimension 2, i.e., \textit{abelian surfaces} over global function fields. By the local (i.e., infinitesimal) Torelli theorem in \cite[Theorem 2.6 and 2.7]{OS} and \cite[Theorem 12.1]{Milne}, the Torelli map $\tau_2 : \M_2 \hookrightarrow \Ac_2$, which sends a smooth projective genus 2 curve $X$ defined over a field $K$ to its principally polarized Jacobian $(\mathrm{Jac}(X), \lambda_{\theta})/K$ (where $\lambda_{\theta}$ is the theta divisor of $\mathrm{Jac}(X)$), is an open immersion. Furthermore, it is shown in \cite[4. Theorem]{OU} (see also \cite[Satz 2]{Weil}) that given a principally polarized abelian surface $(A,\lambda)$ over a field $K$, after a finite extension of scalars, is isomorphic to the canonically principally polarized (generalized) Jacobian variety $(\mathrm{Jac}(X), \lambda_{\theta})$ of a stable genus 2 curve $X$. Recall that if a curve $X$ has good reduction at a place $v \in S$ then so does its Jacobian $\mathrm{Jac}(X)$. 
    %We have the following enumeration:

    \begin{thm} [Estimate on $\Nc_{2, \Fb_q(t)}(\Bc)$]\label{thm:Ab_Surf_Fqt}
    If $\text{char} (\Fb_q) \neq 2,3,5$, then the function $\Nc_{2, \Fb_q(t)}(\Bc)$, which counts the number of principally polarized abelian surfaces $A = \mathrm{Jac}(X)$ where $X$ is a stable genus 2 curve with a marked Weierstrass point over $\Pb^1_{\Fb_q}$ ordered by $0<ht(\Delta_{2}(X)) = q^{40n} \le \Bc$, satisfies:
        \begingroup
        \allowdisplaybreaks
        \begin{align*}
        &\Nc_{2, \Fb_q(t)}(\Bc) \le a_{2,2} \cdot  \Bc^{\frac{7}{10}} + a_{2,4} \cdot  \Bc^{\frac{3}{10}} + b_{2,4} \\\\
        &a_{2,2}(q) = 2 \cdot \frac{(q^{31} + q^{30} + q^{29} - q^{27} - q^{26} - q^{25})}{(q^{28}-1)}, \;\; a_{2,4}(q) = \delta(4,q-1) \cdot 2 \cdot \frac{(q^{13} - q^{11})}{(q^{12}-1)}\\\\
        &b_{2,4}(q) = -2 \cdot \frac{(q^{31} + q^{30} + q^{29} - q^{27} - q^{26} - q^{25})}{(q^{28}-1)} - \delta(4,q-1) \cdot 2 \cdot \frac{(q^{13} - q^{11})}{(q^{12}-1)}
        \end{align*}
        \endgroup
    where
    \[
        \delta(4,q-1):=
        \begin{cases}
            1 & \text{if } 4 \text{ divides } q-1,\\
            0 & \text{otherwise.}
        \end{cases}
    \]
    \end{thm}

    \begin{proof}
    Main Theorem~\ref{Mthm:g_sharper_asymp} combined with Theorem~\ref{thm:hypell_surf_and_qadm_fib} provides an explicit upper bound on the number of stable genus 2 curves with a marked Weierstrass point over $\Pb^1_{\Fb_q}$ with $\text{char} (\Fb_q) \neq 2,3,5$.
    The upper bound follows from the properties of the Torelli map $\tau_2$ as all principally polarized abelian surfaces are isomorphic to Jacobians of genus 2 curves of compact type (i.e., genus 2 curve with dual graph equal to tree) (c.f., \cite[Theorem 2.6 and 2.7]{OS} \& \cite[4. Theorem]{OU}).
    \end{proof}

    See Theorem~\ref{thm:Hyp_Jac_Fqt} for enumerations of higher genus $g \ge 3$ hyperelliptic Jacobians.

    \medskip

    \subsection*{Organization}\label{subsec:Outline}
    %The outline of the paper is as follows. 
    \par In \S \ref{sec:Inertia_Stacks}, we establish the arithmetic geometric properties of the inertia stack $\Ic(\Xc)$ of an algebraic stack $\Xc$ thereby proving the Theorem~\ref{thm:ptcounts} and describing various decompositions of the inertia stacks of quotient stacks. In \S \ref{sec:ModuliofHom}, we formulate the Hom stack $\Hom_n(\Pb^1,\Pcv)$ of rational curves on a weighted projective stack $\Pcv$ and provide a clear decomposition of the inertia stack $\Ic(\Hom_n(\Pb^1,\Pcv))$ (i.e., each summand is the Hom stack $\Hom_n(\Pb^1,\Pc(\vec\lambda_{I_g}))$). In \S \ref{sec:count}, we use the Grothendieck ring of $K$--stacks $K_0(\mathrm{Stck}_K)$ to acquire the motive $\left\{\Hom_n(\Pb^1,\Pcv)\right\}$ (Proposition~\ref{prop:motivecount_Hom}) which provides the class $\left\{ \Ic\left(\Hom_n(\Pb_K^1,\Pc_K(\vec\lambda))\right) \right\}$. We also give an algorithm for computing $|\Hom_n(\Pb^1,\Pcv)(\Fb_q)/\sim|$. Afterwards in \S \ref{sec:qadm_fib}, we formulate the moduli stack $\Lc_{g, |\Delta_{g}| \cdot n}$ of quasi--admissible hyperelliptic genus $g$ fibrations over $\Pb^{1}$ with the hyperelliptic discriminant $\Delta_g$ via the birational geometry of surfaces. We use birational geometry to prove Theorem~\ref{thm:hypell_surf_and_qadm_fib}. Then we compute the related non--weighted point count of the moduli stack $\Lc_{g, |\Delta_{g}| \cdot n}$ over $\Fb_q$ there, proving Theorem~\ref{thm:low_genus_count}. In \S \ref{sec:globfield}, we finally establish the sharp enumerations with precise lower order terms thereby proving Main Theorem~\ref{Mthm:g_sharper_asymp}.

    %; this provides the heuristic evidences for Conjecture~\ref{conj:Ab_Surf_Q}. 

    %\medskip
    
    \subsection*{Notation and conventions}\label{subsec:Conv}
    
    \par In the present paper, schemes/stacks are assumed to be defined over a field $K$, if $K$ is not mentioned explicitly or if such scheme is obviously not defined over any field (e.g., $\Spec \; \Zb$). Given a point $x$ of a scheme/stack, $\kappa(g)$ means the field of definition of $x$ (i.e., the residue field). Given a group scheme $G$ defined over a field $K$, then $Cl(G)$ is the set of conjugate classes of closed points $g$ of $G$ (here, $\kappa(g)$ is not necessarily $K$); this in general is a strictly larger set than the conjugacy class $Cl(G(K))$ of the group of $K$-rational points of $G$.

    %\medskip
    
    \par Here, we use the convention in \cite[\S 8]{Olsson2} that the diagonal of an algebraic stack is representable (by algebraic spaces). 
    For any $T$-point $x$ of a stack $\Xc$, $\Aut(x)$ is the group of automorphisms of $x \in \Xc(T)$ (defined over $T$). We denote $\underline{\Aut}_x$ to be the automorphism space (as an algebraic space) of $x \in \Xc$ (see \eqref{eq:aut_fibprod} in $\S 2$).

    %\medskip

    \par We identify the Weil divisors and the associated divisorial sheaves implicitly (e.g., if $X$ is a Cohen-Macaulay scheme, then the canonical divisor $K_X$ corresponds to the dualizing sheaf $\omega_X \cong \Oc(K_X)$ of $X$). Given a finite morphism $f: X \rightarrow Y$ of reduced equidimensional schemes, a branch divisor of $f$ on $Y$ means the pushforward of the ramification divisor of $f$ on $X$. Given a morphism $f: X \rightarrow Y$ of schemes with an isolated subset $Z \subset Y$ (i.e., $Y$ as a topological space is $Z \sqcup (Y\setminus Z)$ under the Zariski topology), the preimage of $Z$ in $X$ refers to the components of $X$ with their image supported on $Z$.

    \medskip

    %\newpage

    \section{Arithmetic geometry of the inertia stack $\Ic(\Xc)$ of an algebraic stack $\Xc$}\label{sec:Inertia_Stacks}

    \par In this section, we describe the geometry of inertia stacks associated to algebraic stacks; particularly, we first recall various key properties of inertia stacks. By using these properties, we prove Theorem~\ref{thm:ptcounts}. Then we describe the groupoid structure of inertia stacks, in particular, inertia stacks of quotient stacks. For general reference on algebraic stacks, we refer the reader to \cite{Olsson2, Stacks}.

    \medskip
    
    \par Given an algebraic stack $\Xc$ defined over a field $K$, its inertia stack $\Ic(\Xc)$ is defined as:
    \begin{enumerate}
        \item objects: $(x,\alpha)$ where $x \in \Xc(T)$ for some scheme $T$ (i.e., $x: T \rightarrow \Xc$) and $\alpha \in \Aut(x)$
        \item morphisms: $\psi: (x, \alpha) \rightarrow (y, \beta)$ is given by $\phi: x \rightarrow y$ in $\mathrm{Mor}(\Xc)(T)$ such that $\phi \circ \alpha = \beta \circ \phi$, i.e., $\beta=\phi \circ \alpha \circ \phi^{-1}$
    \end{enumerate}
    Also, $\Ic(\Xc)$ is characterized by the following Cartesian diagram (by \cite[Definition 8.1.17]{Olsson2}):
    \begin{equation}\label{eq:inertia_fibprod}
        \begin{tikzcd}%[row sep=1em, column sep=2em]
            \Ic(\Xc) \arrow[d] \arrow[r] & \Xc \arrow[d, "\Delta"]\\
            \Xc  \arrow[r,"\Delta"] & \Xc \times \Xc
        \end{tikzcd}
    \end{equation}
    Note that if the representable morphism $\Delta$ satisfies a property (such as finite type, quasi-separated, etc.,), then this property is also satisfied for the representable morphism $\Ic(\Xc) \rightarrow \Xc$. In particular, $\Ic(\Xc)$ is a $\Xc$-algebraic space, i.e., $\Ic(\Xc) \times_{\Xc} T$ is an algebraic space for any morphism $T \rightarrow \Xc$ from a scheme $T$.
    
    \medskip
    
    \par To understand $\Ic(\Xc) \rightarrow \Xc$, we first pay attention to $\Delta$. Given an object $x: T \rightarrow \Xc$ of $\Xc$ from a scheme $T$, recall that the automorphism space $\underline{\Aut}_x$ of $x$ is defined to be the fiber product $\Xc \tensor[_{\Delta}]{\times}{_{x \times x}} T$. This means that $S$-points of $\underline{\Aut}_x$ are characterized by pairs $(s,\alpha)$ of maps $s: S \rightarrow T$ and automorphisms $\alpha: s^*x \rightarrow s^*x$ in the groupoid $\Xc(T)$. Since $x \times x$ factors through $\Delta$, $\underline{\Aut}_x$ fits into the following Cartesian diagram:
    \begin{equation}\label{eq:aut_fibprod}
        \begin{tikzcd}%[row sep=1em, column sep=2em]
            \underline{\Aut}_x \arrow[d] \arrow[r] & \Ic(\Xc) \arrow[d]\\
            T  \arrow[r,"x"] & \Xc
        \end{tikzcd}
    \end{equation}
    As before, representability of $\Delta$ implies that $\underline{\Aut}_x \rightarrow T$ is a morphism of algebraic spaces, and the group algebraic space structure on $\underline{\Aut}_x$ lift, realizing $\Ic(\Xc)$ as a group algebraic space over $\Xc$.

    \medskip

    \par Before proving Theorem~\ref{thm:ptcounts}, we recall the definition of a weighted point count of an algebraic stack $\Xc$ over $\Fb_q$:

    \begin{defn}\label{def:wtcount}
        The weighted point count of $\Xc$ over $\Fb_q$ is defined as a sum:
        \[
        \#_q(\Xc):=\sum_{x \in \Xc(\Fb_q)/\sim}\frac{1}{|\mathrm{Aut}(x)|},
        \]
        where $\Xc(\Fb_q)/\sim$ is the set of $\Fb_q$--isomorphism classes of $\Fb_q$--points of $\Xc$ (i.e., the set of non--weighted points of $\Xc$ over $\Fb_q$), and we take $\frac{1}{|\mathrm{Aut}(x)|}$ to be $0$ when $|\mathrm{Aut}(x)|=\infty$.
    \end{defn}
    
    \par A priori, the weighted point count can be $\infty$, but when $\Xc$ is of finite type, then the stratification of $\Xc$ by schemes as in \cite[Proof of Lemma 3.2.2]{Behrend} implies that $\Xc(\Fb_q)/\sim$ is a finite set, so that $\#_q(\Xc)<\infty$.
    
    \medskip

    \par We also recall the \textit{Grothendieck-Lefschetz trace formula} for Artin stacks by \cite{Behrend,LO,Sun}.

    \begin{thm}[Theorem 1.1. of \cite{Sun}] \label{SBGLTF} 
    Let $\Xc$ be an Artin stack of finite type over $\Fb_q$. Let $\Frob_q$ be the geometric Frobenius on $\Xc$. Let $\ell$ be a prime number different from the characteristic of $\Fb_q$, and let $ \iota : \Qlb \overset{\sim}{\to} \Cb$ be an isomorphism of fields. For an integer $i$, let $H^i_{\et, c}(\Xc_{/\Fqbar};\Qlb)$ be the cohomology with compact support of the constant sheaf $\Qlb$ on $\Xc$ as in \cite{LO}. Then the infinite sum regarded as a complex series via $\iota$
            \begin{equation}
            \sum_{i \in \Zb}~(-1)^i \cdot tr\big( \Frob^*_q : H^i_{c}(\Xc_{/\Fqbar};\Qlb) \to H^i_{c}(\Xc_{/\Fqbar};\Qlb) \big)
            \end{equation}
    is absolutely convergent to the weighted point count $\#_q(\Xc)$ of $\Xc$ over $\Fb_q$.
    \end{thm}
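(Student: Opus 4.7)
The plan follows the strategy pioneered by Behrend for Deligne--Mumford stacks and extended by Sun to general Artin stacks. First, I would reduce the problem to the scheme--theoretic case via presentations: given $\Xc$ of finite type over $\Fb_q$, choose a smooth atlas $U \to \Xc$ where $U$ is a scheme, giving a smooth groupoid $U \times_{\Xc} U \rightrightarrows U$ presenting $\Xc$. Using stratification and the excision long exact sequence in compactly--supported $\ell$-adic cohomology, one reduces to the case where $\Xc$ is a quotient stack $[U/G]$ for a linear algebraic group $G$ acting on a scheme $U$ of finite type, since any finite--type Artin stack admits a stratification into such pieces (and the inertia $\Ic(\Xc)$ inherits such a stratification from the discussion after Corollary~\ref{cor:presentation_inertia}).

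For quotient strata $[U/G]$ with connected $G$, Lang's theorem trivializes all $G$-torsors over $\Fb_q$, so an orbit--stabilizer computation yields $\#_q([U/G]) = |U(\Fb_q)|/|G(\Fb_q)|$. The next step identifies this ratio with the alternating trace sum on $\bigoplus_i H^i_c(\Xc_{/\Fqbar};\Qlb)$, by combining the Leray spectral sequence for the smooth surjection $U \to [U/G]$ with Grothendieck's classical trace formula applied separately to $U$ and $G$; the disconnected case is treated by summing over the finite set of $G$-torsor classes with appropriate weights. Applied in particular to $\Ic(\Xc)$, this produces the identity of the statement.

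The main obstacle, and the reason this goes genuinely beyond Behrend's original theorem, is convergence. For Artin stacks with positive--dimensional automorphism groups (already at $B\Gb_m$), the groups $H^i_c(\Xc_{/\Fqbar};\Qlb)$ are non--vanishing in infinitely many degrees $i$, so the alternating sum becomes a truly infinite series that a priori need not converge. Handling this requires Deligne's theory of weights, extended to stacks as in Sun's work: one shows that the Frobenius eigenvalues on $H^i_c(\Xc_{/\Fqbar};\Qlb)$ have complex absolute value bounded above by $q^{i/2}$, and the dual negative--weight behavior on $BG$-type strata forces the tails of the alternating series to decay geometrically, hence to converge absolutely under $\iota$.

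The final step glues everything together: excision and the additivity of both $\#_q(\cdot)$ and the alternating trace sum under locally closed stratifications propagate the formula from quotient strata to all of $\Xc$. The hardest technical input is precisely the weight bound on compactly--supported $\ell$-adic cohomology of Artin stacks, which in turn rests on the full six--functor formalism for $\ell$-adic sheaves on stacks developed by Laszlo--Olsson; it is this weight estimate, together with the uniformity of the tail bounds across a stratification, that distinguishes the Artin case from the much tamer Deligne--Mumford setting treated by Behrend.
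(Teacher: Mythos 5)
The paper does not prove this theorem: it is quoted verbatim as a citation to Theorem~1.1 of Sun's \emph{L-series of Artin stacks over finite fields} (with the Laszlo--Olsson six-functor formalism and Behrend's earlier work in the background), so there is no internal proof to compare your attempt against. Your sketch is a faithful high-level account of the Behrend--Sun strategy as found in those references: stratify a finite-type Artin stack into global quotient strata (in practice $[X/GL_n]$ via Kresch's theorem), use Lang's theorem to compute weighted point counts on such strata, transfer to cohomology via the Leray spectral sequence and Grothendieck's trace formula, and resolve the genuinely new convergence issue (absent in the Deligne--Mumford setting) through the weight bound on compactly supported $\ell$-adic cohomology. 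That last step is indeed the crux, and it is the same weight estimate ([Sun, Theorem~1.4]) that this paper later invokes in its discussion of lower order terms. One small thing worth flagging: as transcribed in the paper, the displayed sum mistakenly refers to $H^i_c(\Ic(\Xc)_{/\Fqbar};\Qlb)$ rather than $H^i_c(\Xc_{/\Fqbar};\Qlb)$; you implicitly corrected this by stating the theorem for a general $\Xc$ and only then specializing to $\Ic(\Xc)$, which is how the paper actually uses the result.
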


    \par When the stack $\Xc$ is a Deligne--Mumford stack of finite type over $\Fb_q$ with affine diagonal, then the corresponding compactly-supported, $\ell$-adic \'etale cohomology for prime number $\ell$ invertible in $\Fb_q$ is finite dimensional as a $\Ql$-algebra, making the above trace formula to hold in $\Ql$-coefficients. 

    \medskip

    \par We are now ready to prove Theorem~\ref{thm:ptcounts}:

    \begin{proof}[Proof of Theorem~\ref{thm:ptcounts}]
        Choose any $x \in \Xc(\Fb_q)/\sim$. Then the morphism $x: \Fb_q \rightarrow \Xc$ factors through a representable morphism $\overline{x}: [\Spec (\Fb_q)/\underline{\Aut}_x] \rightarrow \Xc$. Note that for any $\Fb_q$-scheme $T$ and any $y,z \in \Xc(T)$ such that $y \sim x_T$ and $z \sim x_T$ in $\Xc(T)$, then $y,z$ factors through $x$ and $\underline{\mathrm{Isom}}_{\Xc}(y,z) \cong \underline{\mathrm{Isom}}_{[\Spec (\Fb_q)/\underline{\Aut}_x]}(y',z')$, where $y=\overline{x} \circ y'$ and $z=\overline{x} \circ z'$. Thus, $[\Spec (\Fb_q)/\underline{\Aut}_x]$ is a substack of $\Xc$ via $\overline{x}$.
        
        Now consider $\Ic(\Xc)_{\overline{x}}$ defined by the following Cartesian square:
        \[
            \begin{tikzcd}
                \Ic(\Xc)_{\overline{x}} \arrow[d] \arrow[r] & \Ic(\Xc) \arrow[d]\\
                \left[\Spec (\Fb_q)/\underline{\Aut}_x \right] \arrow[r,"\overline{x}"] & \Xc
            \end{tikzcd}
        \]
        This is a substack of $\Ic(\Xc)$, and $(y,\beta) \in (\Ic(\Xc)_{\overline{x}})(\Fb_q)$ iff $y \sim x$ in $\Xc(\Fb_q)$. Since $x$ contributes 1 on the unweighted point count $|\Xc(\Fb_q)/\sim|$, it suffices to show that $\#_q(\Ic(\Xc)_{\overline{x}})=1$.
        
        Observe that two points $(x,\alpha)$ and $(x,\beta)$ in $\Ic(\Xc)_{\overline{x}}$ are equivalent iff $\beta = \phi \circ \alpha \circ \phi^{-1}$ for some $\phi \in \Aut(x)$. This holds in general if we replace $x$ by $y: U \rightarrow \Xc$ that factors thru $x$. Thus, $\Ic(\Xc)_{\overline{x}} \cong [\underline{\Aut}_x/\underline{\Aut}_x]$, where the group space action is the conjugation. Since the diagonal of $\Xc$ is quasi-separated and of finite type, $\underline{\Aut}_x$ is a quasi-separated group algebraic space of finite type over $\Fb_q$ by Diagram~\eqref{eq:aut_fibprod}; henceforth, $\Aut(x)=\underline{\Aut}_x(\Fb_q)$ is a finite group since $\underline{\Aut}_x$ admits a finite stratification by schemes of finite type by \cite[II.6.6]{Knutson}. Moreover, $\Aut(x,\alpha)$ is the finite centralizer subgroup $C_{\Aut(x)}(\alpha) \subset \Aut(x)$, and the set $(\Ic(\Xc)_{\overline{x}})(\Fb_q)/\sim$ is exactly the set $Cl(\Aut(x))$ of orbits of $\Aut(x)$ under the conjugation. Then, the Orbit-Stabilizer Theorem implies that as a set,
        \[
            \Aut(x)\cong \bigsqcup_{\alpha \in Cl(\Aut(x))} \Aut(x)/C_{\Aut(x)}(\alpha) .
        \]
        Finally, we can divide the cardinality of both sides by the finite number $|\Aut(x)|$; then right hand side becomes $\#_q(\Ic(\Xc)_{\overline{x}})$, proving the statement.
    \end{proof}

    \par The following Lemma shows that certain nice property of $\Xc$ carries over to $\Ic(\Xc)$ as well.
    
    \begin{lem}\label{lem:affdiag}
        If $\Xc$ is an algebraic stack of finite type with affine finite type diagonal, then so is $\Ic(\Xc)$.
    \end{lem}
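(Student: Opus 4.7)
The plan is to run everything off of the Cartesian square \eqref{eq:inertia_fibprod}, exploiting the fact that affine morphisms and finite type morphisms are stable under base change and composition.

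First I would show that the projection $p : \Ic(\Xc) \to \Xc$ is affine and of finite type. The square \eqref{eq:inertia_fibprod} realizes $p$ as the base change of the diagonal $\Delta_\Xc : \Xc \to \Xc \times \Xc$, so $p$ inherits affineness and finite type from $\Delta_\Xc$. Composing with the structure morphism $\Xc \to \Spec K$ (which is of finite type by hypothesis) immediately yields that $\Ic(\Xc)$ is of finite type over $K$.

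Next I would verify that $\Ic(\Xc)$ has affine finite type diagonal. The standard trick is to factor the diagonal of $\Ic(\Xc)$ through the relative product over $\Xc$:
\[
\Delta_{\Ic(\Xc)} : \Ic(\Xc) \xrightarrow{\; \Delta_{p} \;} \Ic(\Xc) \times_{\Xc} \Ic(\Xc) \xrightarrow{\; q \;} \Ic(\Xc) \times \Ic(\Xc).
\]
Since $p$ is affine, it is in particular separated, so the relative diagonal $\Delta_{p}$ is a closed immersion; in particular $\Delta_p$ is affine and of finite type. The second map $q$ is the base change of $\Delta_\Xc$ along $\Ic(\Xc) \times \Ic(\Xc) \to \Xc \times \Xc$, so $q$ is also affine and of finite type. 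Composing these two, $\Delta_{\Ic(\Xc)}$ is affine and of finite type, as desired.

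I do not expect any real obstacle: the argument is essentially bookkeeping with the two permanence properties (base change and composition) for the classes of affine and finite type morphisms, applied to the defining Cartesian square. The only minor subtlety is to recall that in the convention of \cite{Olsson2} the diagonal of an algebraic stack is representable by algebraic spaces, so the morphism $p$ truly is an algebraic space morphism over $\Xc$ and the usual factorization of the diagonal through the relative product is legitimate.
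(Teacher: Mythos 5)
Your proof is correct. The permanence argument works exactly as you sketch: the Cartesian square \eqref{eq:inertia_fibprod} makes $p:\Ic(\Xc)\to\Xc$ affine of finite type; since $p$ is affine (hence separated and representable), the relative diagonal $\Delta_p:\Ic(\Xc)\to\Ic(\Xc)\times_{\Xc}\Ic(\Xc)$ is a closed immersion; and the comparison morphism $\Ic(\Xc)\times_{\Xc}\Ic(\Xc)\to\Ic(\Xc)\times\Ic(\Xc)$ is the base change of $\Delta_{\Xc}$ along $p\times p$, hence affine of finite type. Composing gives the claim.

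The paper takes a more concrete route: it reduces to showing that for any scheme $T$ and any pairs $(x,\alpha),(y,\beta)\in\Ic(\Xc)(T)$, the Isom space $\underline{\mathrm{Isom}}_{\Ic(\Xc)}((x,\alpha),(y,\beta))$ is an affine $T$-scheme. It then exhibits this Isom space explicitly as the preimage of the identity section under the map of affine $T$-schemes $\underline{\mathrm{Isom}}_{\Xc}(x,y)\to\underline{\Aut}$, $\phi\mapsto\phi\circ\alpha\circ\phi^{-1}\circ\beta^{-1}$; since the identity section of an affine group algebraic space is a closed immersion, the Isom space is a closed subscheme of an affine $T$-scheme, hence affine. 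The two proofs are logically equivalent but pitched at different levels of abstraction: yours is a ``soft'' argument via the standard factorization of the diagonal and permanence of properties under base change and composition, and it generalizes painlessly to any property of morphisms stable under those operations (quasi-affine, finite, etc.); the paper's version unwinds the same fact at the level of $T$-points, which makes visible the concrete structure of the Isom spaces of the inertia stack (a fact used implicitly elsewhere in Section 2, e.g.\ in the proof of Theorem~\ref{thm:ptcounts}). Both are valid, and neither fills a gap the other leaves.
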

    
    \begin{proof}
        Since $\Xc$ is of finite type with finite type diagonal, $\Ic(\Xc)$ must be of finite type as well by Diagram~\eqref{eq:inertia_fibprod}. It remains to show that $\Ic(\Xc)$ has an affine diagonal. This is equivalent to showing that for any scheme $T$ and any pairs $(x,\alpha),(y,\beta) \in \Ic(\Xc)(T)$, the Isom space $\underline{\mathrm{Isom}}_{\Ic(\Xc)}((x,\alpha),(y,\beta))$ is an affine $T$-scheme by the following Cartesian diagram:
        \[
            \begin{tikzcd}
                \underline{\mathrm{Isom}}_{\Ic(\Xc)}((x,\alpha),(y,\beta)) \arrow[d] \arrow[r] & T \arrow[d,"{(x,\alpha)} \times {(y,\beta)}"]\\
                \Ic(\Xc) \arrow[r,"\Delta"] & \Ic(\Xc) \times \Ic(\Xc)
            \end{tikzcd}
        \]
        
        To see the structure of $\underline{\mathrm{Isom}}_{\Ic(\Xc)}((x,\alpha),(y,\beta)) \rightarrow T$, observe that $\underline{\mathrm{Isom}}_{\Xc}(x,y) \rightarrow T$ and $\underline{\Aut}_x \rightarrow T$ are affine morphisms of finite type by the conditions on the diagonal of $\Xc$. Then $\underline{\mathrm{Isom}}_{\Ic(\Xc)}((x,\alpha),(y,\beta))$ is the preimage under the closed subscheme $1_x \in \underline{\Aut}_x$ of a morphism between affine $T$-schemes:
        \begin{align*}
            \underline{\mathrm{Isom}}_{\Xc}(x,y) & \rightarrow  \underline{\Aut}_x\\
            \phi & \mapsto  \phi \circ \alpha \circ \phi^{-1} \circ \beta^{-1}
        \end{align*}
        Therefore, $\underline{\mathrm{Isom}}_{\Ic(\Xc)}((x,\alpha),(y,\beta))$ is an affine $T$-scheme as well.
    \end{proof}
    
    In practice, an algebraic stack $\Xc$ can be characterized by its smooth cover $U \rightarrow \Xc$ by an algebraic space $U$ (most of the time, $U$ is assumed to be a scheme) with the space of equivalence relations $R$, i.e., $R$ is defined via the following Cartesian diagram
    \[
        \begin{tikzcd}%[row sep=1em, column sep=2em]
            R \arrow[d,"s"] \arrow[r,"t"] & U \arrow[d]\\
            U  \arrow[r] & \Xc
        \end{tikzcd}
    \]
    where $s(r)=x$ and $t(r)=y$ for any directed equivalence relation $r:x \rightarrow y$ of $x,y \in U$ via $r \in R$. In this case, $\Xc \cong [U/R]$ (technically, $[U/R]$ is the sheafification of the presheaf $U/R$ of groupoids over $\mathrm{Sch}$). Note that the following Cartesian square
    \[
        \begin{tikzcd}%[row sep=1em, column sep=2em]
            R \arrow[d] \arrow[r,"s \times t"] & U \times U \arrow[d]\\
            \Xc  \arrow[r,"\Delta"] & \Xc \times \Xc
        \end{tikzcd}
    \]
    implies that $R$ is an algebraic space as well, since $\Delta: \Xc \rightarrow \Xc \times \Xc$ is representable. Given this presentation, we obtain the following presentation of $\Ic(\Xc)$:
    
    \begin{prop}\label{prop:presentation_inertia}
        $\Ic(\Xc) \cong [R_{\Delta}/(R_{\Delta} \tensor{\times}{_s} R)]$, where $R_{\Delta}$ is defined by the following Cartesian square:
        \[
            \begin{tikzcd}%[row sep=1em, column sep=2em]
                R_{\Delta} \arrow[d] \arrow[r] & R \arrow[d]\\
                U  \arrow[r,"\Delta"] & U \times U
            \end{tikzcd}
        \]
    \end{prop}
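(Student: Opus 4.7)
The plan is to realize $R_{\Delta}$ as the pullback of $\Ic(\Xc)$ along the atlas $U\to\Xc$, and then compute the resulting groupoid relation. First, I would base change the representable morphism $\Ic(\Xc)\to\Xc$ from Diagram~\eqref{eq:inertia_fibprod} along $U\to\Xc$: unraveling definitions, a $T$-point of $\Ic(\Xc)\times_{\Xc}U$ is a $T$-point $u:T\to U$ together with an automorphism of the composite $T\to U\to\Xc$ in $\Xc(T)$, which by the presentation $\Xc\iso[U/R]$ is precisely an element of $R(T)$ with both source and target equal to $u$, i.e., a $T$-point of $R_{\Delta}$. Since $U\to\Xc$ is smooth and surjective, the induced map $R_{\Delta}\to\Ic(\Xc)$ is as well.

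Next I would identify the $2$-fiber product $R_{\Delta}\times_{\Ic(\Xc)}R_{\Delta}$ with $R_{\Delta}\tensor{\times}{_s}R$. The description of morphisms in $\Ic(\Xc)$ recalled at the start of this section shows that a $T$-point of the left hand side is a pair $(u_1,r_1),(u_2,r_2)\in R_{\Delta}(T)$ together with $\phi\in R(T)$ from $u_1$ to $u_2$ satisfying $r_2=\phi\circ r_1\circ\phi^{-1}$. Since $r_2$ is then determined by the pair $(r_1,\phi)$, the datum reduces to a point of $R_{\Delta}$ together with an element of $R$ whose source equals the common source/target of $r_1$, which is precisely $R_{\Delta}\tensor{\times}{_s}R$. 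Under this identification, the two projections $R_{\Delta}\tensor{\times}{_s}R\rightrightarrows R_{\Delta}$ become $(r_1,\phi)\mapsto r_1$ and $(r_1,\phi)\mapsto\phi\circ r_1\circ\phi^{-1}$.

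Finally, invoking the standard fact that a smooth surjection from an algebraic space determines an algebraic stack as the quotient of the resulting groupoid object, I would conclude that $\Ic(\Xc)\iso[R_{\Delta}/(R_{\Delta}\tensor{\times}{_s}R)]$. The main bookkeeping obstacle is to verify that the $2$-categorical fiber products are handled correctly so that all the groupoid laws (identity, composition, and inverse) on $R_{\Delta}\tensor{\times}{_s}R$ match the ones induced from $\Ic(\Xc)$; this is a direct, if tedious, unpacking of the definition of morphisms in the inertia stack and uses no new ideas beyond what already appeared in Lemma~\ref{lem:affdiag}.
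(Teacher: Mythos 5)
Your proposal is correct and takes essentially the same route as the paper: identify $R_{\Delta}$ with $\Ic(\Xc)\times_{\Xc}U$, then compute the $2$-fiber product $R_{\Delta}\times_{\Ic(\Xc)}R_{\Delta}$ on $T$-points and observe that the constraint $r_2=\phi\circ r_1\circ\phi^{-1}$ makes $r_2$ redundant, yielding $R_{\Delta}\tensor[_t]{\times}{_s}R$. The only real difference is that the paper dispatches your first step by citing \cite[Tag 06PR]{Stacks}, whereas you rederive it from the fibered-square description of $\Ic(\Xc)$ and the presentation $\Xc\cong[U/R]$; both are fine.
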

    \begin{proof}
        By \cite[Tag 06PR]{Stacks}, $R_{\Delta}$ (denoted $G$ in loc.cit.) is a smooth cover of $\Ic(\Xc)$. To see that $R_{\Delta} \times_{\Ic(\Xc)} R_{\Delta}$ is isomorphic to $R_{\Delta} \tensor[_t]{\times}{_s} R$, it suffices to compare their $T$-points for any scheme $T$. Recall by the Cartesian diagram above that any $T$-point of $R_{\Delta}$ is characterized by a pair $(u,r) \in U \times R$ where $r: u \rightarrow u$. Then, given any $((u_1,r_1),(u_2,r_2)) \in (R_{\Delta} \times_{\Ic(\Xc)} R_{\Delta})(T)$, then there is $\tau: u_1 \rightarrow u_2$ in $R(T)$ such that $\tau \circ r_1=r_2 \circ \tau$. This gives an element $((u_1,r_1),\tau) \in (R_{\Delta} \tensor[_t]{\times}{_s} R)(T)$. The converse can be recovered, as $u_2$ is the target of $r_1$ and $r_2=\tau \circ r_1 \circ \tau^{-1}$. This establishes the bijection between $T$-points of $R_{\Delta} \times_{\Ic(\Xc)} R_{\Delta}$ and $R_{\Delta} \tensor[_t]{\times}{_s} R$.
    \end{proof}
    
    \par Sometimes, we will denote $R(\Xc)$ (resp, $R_{\Delta}(\Xc)$) instead of $R$ (resp, $R_{\Delta}$) when we need to emphasize the algebraic stack $\Xc$ in question.

    \medskip
    
    \par Recall that a quotient stack, denoted $[U/G]$, corresponds to $U$ a scheme with the action of a group scheme $G$. In this case, $R=U \times G$ with $s$ being the first projection and $t$ being the $G$-action map $t: (u,g) \mapsto g\cdot u$. Then $R_{\Delta} \subset U \times G$ consists of $(u,g)$ with $t(u,g)=g \cdot u=u$. 
    
    \begin{cor}\label{cor:presentation_inertia}
        If $\Xc \cong [U/G]$ is a quotient stack, then $\Ic(\Xc)$ is also a quotient stack $[R_{\Delta}/G]$, where $R \cong U \times G$ and $G$ acts on $R_{\Delta} \subset R$ by $g \cdot (u,h)=(g \cdot u, ghg^{-1})$.
    \end{cor}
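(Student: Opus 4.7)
The plan is to directly apply Proposition~\ref{prop:presentation_inertia} to the standard groupoid presentation of the quotient stack $\Xc \cong [U/G]$, and then identify the resulting groupoid as a transformation groupoid for the $G$-action on $R_\Delta$ specified in the statement.

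First I would recall that the standard groupoid presentation of $[U/G]$ has $R = U \times G$ with source and target maps $s(u,g) = u$ and $t(u,g) = g \cdot u$. Substituting this into the defining Cartesian square of $R_\Delta$ in Proposition~\ref{prop:presentation_inertia}, I can identify $R_\Delta$ as the stabilizer subscheme $\{(u,g) \in U \times G : g \cdot u = u\}$, i.e., the locus where the source and target of an equivalence relation coincide.

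Next I would unpack the morphism space $R_\Delta \tensor[_t]{\times}{_s} R$ in the groupoid presentation of $\Ic(\Xc)$ furnished by the Proposition. Its $T$-points consist of pairs $((u,h),(u',g))$ satisfying $h \cdot u = u$ and $u' = t(u,h) = u$, so forgetting the redundant middle $u$ yields a natural isomorphism $R_\Delta \tensor[_t]{\times}{_s} R \cong R_\Delta \times G$. Under this identification, the source map sends $((u,h),g)$ to $(u,h)$, while the target map, determined by the conjugation formula $r_2 = \tau \circ r_1 \circ \tau^{-1}$ established in the proof of Proposition~\ref{prop:presentation_inertia}, sends $((u,h),g)$ to $(g \cdot u, ghg^{-1})$. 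This can be verified by composing the arrows $\tau^{-1} : g \cdot u \to u$, $h : u \to u$, and $\tau : u \to g \cdot u$ in the groupoid $R$, noting that composition corresponds to multiplication of the group-element labels.

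The final step is to recognize these source and target maps as precisely those of the transformation groupoid for the $G$-action $g \cdot (u,h) = (g \cdot u, ghg^{-1})$ on $R_\Delta$. One must check that this action is well-defined on $R_\Delta$: if $h \cdot u = u$, then $(ghg^{-1}) \cdot (g \cdot u) = g \cdot (h \cdot u) = g \cdot u$, so indeed $(g \cdot u, ghg^{-1}) \in R_\Delta$. This yields $\Ic(\Xc) \cong [R_\Delta/G]$, completing the proof. There is no serious obstacle here; the argument is a direct unwinding of the groupoid structure already furnished by Proposition~\ref{prop:presentation_inertia}, and the only points requiring care are verifying that the conjugation arising as the target map matches the stated $G$-action formula and that this action preserves the stabilizer locus $R_\Delta$.
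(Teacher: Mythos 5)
Your proof is correct and follows essentially the same route as the paper's: both apply Proposition~\ref{prop:presentation_inertia} to the standard atlas $R = U \times G$ of $[U/G]$, exhibit the isomorphism $R_{\Delta} \tensor[_t]{\times}{_s} R \cong R_{\Delta} \times G$ by forgetting the redundant $U$-coordinate, and identify the target/action map with conjugation. Your extra checks (that $\tau \circ r \circ \tau^{-1}$ unwinds to $(g\cdot u, ghg^{-1})$ and that the conjugation action preserves the stabilizer locus $R_\Delta$) are sound and make explicit what the paper leaves implicit.
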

    \begin{proof}
        By the proof of Proposition~\ref{prop:presentation_inertia}, it suffices to show that $R_{\Delta} \tensor[_t]{\times}{_s} R \cong R_{\Delta} \times G$ and that the action map 
        \begin{align*}
        R_{\Delta} \tensor[_t]{\times}{_s} R &\rightarrow R_{\Delta}\\
        ((u,r),\tau) &\mapsto (t(r),\tau \circ r \circ \tau^{-1}),
        \end{align*}
        which coincides with the second projection of $R_{\Delta} \times_{\Ic(\Xc)} R_{\Delta}$, coincides with the conjugate $G$-action described above. The isomorphism $R_{\Delta} \tensor[_t]{\times}{_s} R \cong R_{\Delta} \times G$ is given by:
        \begin{align*}
            R_{\Delta} \times G &\rightarrow R_{\Delta} \tensor[_t]{\times}{_s} R\\
            ((u,g),h) & \mapsto ((u,g),(g \cdot u=u, h))
        \end{align*}
        By the description of the action map above, $G$ acts on $R_{\Delta}$ by the conjugation.
    \end{proof}
    
    \par Now assume that a quotient stack $\Xc \cong [U/G]$ of finite type has the affine diagonal. Then, $R_{\Delta}$ is not irreducible in general; in fact, not even connected. Since the image of the second projection $\pi_2: U \times G \supset R_{\Delta} \rightarrow G$ can have many irreducible components $G_i$, we have the decomposition $R_{\Delta}=\cup \pi_2^{-1}(G \cdot G_i)$ (where $G$ acts on itself by conjugation). Note that when $\pi_2(R_{\Delta})$ is disconnected, so is $R_{\Delta}$. 
    
    \medskip
    
    \par Thus, assume furthermore that $\Xc$ is a Deligne-Mumford (DM) stack. Since the diagonal of $\Xc$ is affine (by the previous assumption) and formally unramified (by DM), the diagonal must be finite; this implies that $\pi_2(R_{\Delta})$ lies in torsion subset of $G$. Instead of stratifying $\pi_2(R_{\Delta})$ by $G$-orbits of its irreducible components as above, Abramovich-Graber-Vistoli in \cite[Definition 3.1.5]{AGV} stratify $\Ic(\Xc)$ by looking at orders of automorphism elements: in our language, this induces a coarser stratification of $R_{\Delta}$:
    \begin{equation}\label{eq:decomp_order}
    \begin{split}
        R_{\Delta}(\Xc)&=\bigsqcup_{r \in \Zb_{>0}} R_{\Delta, \mu_r}(\Xc)\\
        \Ic(\Xc)&= \bigsqcup_{r \in \Zb_{>0}} \Ic_{\mu_r}(\Xc)=\bigsqcup_{r \in \Zb_{>0}} [R_{\Delta,\mu_r}(\Xc)/G]
    \end{split}
    \end{equation}
    where $R_{\Delta,\mu_r}(\Xc)$ is the preimage under $\pi_2$ of the subscheme of order $r$ elements of $G$. However, $R_{\Delta,\mu_r}(\Xc)$ can still be disconnected with many components of different dimensions.

    \medskip
    
    \par Instead, assume that we have chosen a nice presentation of $\Xc$ into a quotient stack $[U/G]$ such that the support of $\pi_2(R_{\Delta})$ consists of finitely many closed points of $G$. In this case, $\pi_2(R_{\Delta})$ is, as a set, a disjoint union of conjugate classes of some closed points in $\pi_2(R_{\Delta})$. Let's use our initial decomposition of $R_{\Delta}$ as above by $G$-orbits of connected components of $\pi_2(R_{\Delta})$. This induces the following stratification:
    
    \begin{defn}\label{def:decomp_conjugate}
    Let $\Xc \cong [U/G]$ be a Deligne-Mumford quotient stack of finite type with affine diagonal and let $R_{\Delta}$ be as in Corollary~\ref{cor:presentation_inertia} such that the support of the second projection $\pi_2(R_{\Delta})$ in $G$ consists of finitely many closed points of $G$. Then the decomposition of the inertia stack $\Ic(\Xc)$ via the conjugacy classes is as follows:
    \begin{equation*}
    \begin{split}
        R_{\Delta}(\Xc)&=\bigsqcup_{\alpha \in Cl(G)} R_{\Delta, \alpha}(\Xc),\\
        \Ic(\Xc)&= \bigsqcup_{\alpha \in Cl(G)} \Ic_{\alpha}(\Xc)=\bigsqcup_{\alpha \in Cl(G)} [R_{\Delta,\alpha}(\Xc)/G],
    \end{split}
    \end{equation*}
    where $R_{\Delta,\alpha}(\Xc)$ is the preimage under $\pi_2$ of a conjugate class $\alpha \in Cl(G)$, as a finite subset of $G$.
    \end{defn}
    
    Note that $R_{\Delta,\alpha} = \sqcup_{g \in \alpha} R_{\Delta,g}$ where $R_{\Delta,g}$ is the preimage under $\pi_2$ of $g \in G$; it is the base change by $\kappa(g)/K$ of the fixed locus in $U$ of $g \in G$ (i.e., every point is fixed under the action of $g$). Observe that $R_{\Delta,hgh^{-1}}=h \cdot R_{\Delta,g}$, which is itself $R_{\Delta,g}$ (then $h \in C_G(g)$) or is disjoint from $R_{\Delta,g}$ by the finiteness of $\pi_2(R_{\Delta})$. Therefore, $\Ic_{\alpha}(\Xc)=[R_{\Delta,g}/C_G(g)]$ for any generator $g \in \alpha$, i.e. $\alpha=G \cdot g$. 

    \medskip
    
    As a summary, the decomposition in Definition~\ref{def:decomp_conjugate} is finer than \eqref{eq:decomp_order} when it exists, but assumes \textit{the finiteness of $\pi_2(R_{\Delta}) \subset G$ as a subset}. We will see that weighted projective stacks (and Hom stacks) defined in \S 3 satisfy this condition.
    
    \begin{rmk}\label{rmk:finiteness_pi2_into_G}
        When $\Xc \cong [U/G],~U,~G$ in Definition~\ref{def:decomp_conjugate} are defined over a perfect field $K$, the condition, that the support of $\pi_2(R_{\Delta})$ in $G$ consists of finitely many closed points of $G$, is equivalent to the finiteness of the following set:
        \[
            \left\{ g \text{ is a geometric point of } G \;|\; g\cdot u=u \text{ for some geometric point } u \text{ of } U \right\} .
        \]
        When $\Xc$ is Deligne-Mumford and $G$ is an abelian group (such as $\Gb_m$), this is easy to check. However, when $G$ is a non-abelian group (examples are GIT constructions of moduli of smooth/stable curves), this condition puts restriction on what kind of $g$ can fix an element of $U$, even when $\Xc$ is a Deligne-Mumford stack. If $g \cdot u=u$, then $hgh^{-1} \cdot hu=hu$, so that this set above is a union of conjugacy classes as sets. Whenever the centralizer subgroup scheme $C_G(g)$ has lower dimension than $G$, the conjugacy class (i.e., the orbit of $g$ under conjugation) forms a positive dimensional subscheme, contained in the set above. Since $K$ is perfect, the algebraic closure $\overline{K}$ is infinite, implying that such positive dimensional subschems have infinitely many geometric points by Bertini's Theorem. 
    \end{rmk}

    %\medskip

    \section{Hom stack $\Hom_n(\Pb^1,\Pcv)$ of rational curves on a weighted projective stack}
    \label{sec:ModuliofHom}

    \par In this section, we formulate the Hom stack $\Hom_n(\Pb^1,\Pcv)$ over a base field $K$. First, we recall the definition of a weighted projective stack $\Pcv$ with the weight $\vec{\lambda}$ over $K$.
    
    \begin{defn}\label{def:wtproj} 
        Fix a tuple of nondecreasing positive integers $\vec{\lambda} = (\lambda_0, \dotsc, \lambda_N)$. The $N$-dimensional weighted projective stack $\Pcv = \Pc(\lambda_0, \dotsc, \lambda_N)$ with the weight $\vec{\lambda}$ is defined as a quotient stack
        \[
        \Pcv := \left[(\Ab_{x_0, \dotsc, x_N}^{N+1} \setminus 0) / \Gb_m\right]
        \]
        where $\zeta \in \Gb_m$ acts by $\zeta \cdot (x_0, \dotsc, x_N)=(\zeta^{\lambda_0} x_0, \dotsc, \zeta^{\lambda_N} x_N)$. In this case, the degree of $x_i$'s are $\lambda_i$'s respectively. A line bundle $\Oc_{\Pcv}(m)$ is defined to be the line bundle associated with the sheaf of degree $m$ homogeneous rational functions without poles on $\Ab_{x_0, \dotsc, x_N}^{N+1} \setminus 0$.
    \end{defn}
    
    \par Note that $\Pcv$ is not an (effective) orbifold when $\gcd(\lambda_0, \dotsc, \lambda_N) \neq 1$. In this case, the finite cyclic group scheme $\mu_{\gcd(\lambda_0, \dotsc, \lambda_N)}$ is the generic stabilizer of $\Pcv$. When we need to emphasize the field $K$ of definition of $\Pcv$, we instead use the notation $\Pc_{K}(\vec\lambda)$.

    \begin{lem}\label{lem:wtproj_prop}
        The $N$-dimensional weighted projective stack $\Pcv = \Pc(\lambda_0, \dotsc, \lambda_N)$ over any field $K$ is of finite type with finite type affine diagonal.
    \end{lem}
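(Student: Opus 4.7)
The plan is to deduce both assertions directly from the quotient presentation $\Pcv \cong [U/\Gb_m]$ with $U = \Ab^{N+1}_K \setminus 0$. For finite type, the canonical atlas $U \to \Pcv$ is smooth surjective (because $\Gb_m$ is smooth) and $U$ is a finite type $K$-scheme, so $\Pcv$ is of finite type over $K$ by definition.

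For the diagonal, I would use the standard Cartesian square associated to a quotient stack: writing $R := U \times \Gb_m$ together with the two maps $s,t : R \to U$ given by the projection $s(u,g)=u$ and the action $t(u,g) = g \cdot u$, the diagonal $\Delta : \Pcv \to \Pcv \times \Pcv$ sits in the Cartesian square
\[
\begin{tikzcd}
R \arrow[d] \arrow[r,"{(s,t)}"] & U \times U \arrow[d] \\
\Pcv \arrow[r,"\Delta"] & \Pcv \times \Pcv
\end{tikzcd}
\]
where the right vertical arrow is smooth surjective. It therefore suffices to verify that the top horizontal arrow $(s,t): R \to U \times U$ is affine and of finite type, since both properties are stable under (and can be checked after) smooth base change on the target, and $\Delta$ is already known to be representable by algebraic spaces.

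This last verification is essentially immediate: $R = (\Ab^{N+1}_K \setminus 0) \times \Gb_m$ and $U \times U = (\Ab^{N+1}_K \setminus 0) \times (\Ab^{N+1}_K \setminus 0)$ are both quasi-affine (in fact, open in affine) $K$-schemes of finite type, and $(s,t)$ is the restriction of a morphism of affine schemes of finite type, hence itself affine of finite type. Descending along the smooth cover $U \times U \to \Pcv \times \Pcv$ then yields that $\Delta$ is affine and of finite type.

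There is no real obstacle here; the content of the lemma is entirely formal once one has the quotient presentation. The only mild subtlety is making sure that affineness and finite-typeness genuinely descend along the smooth cover $U \times U \to \Pcv \times \Pcv$ for the representable morphism $\Delta$, which is a standard application of smooth (or fppf) descent for properties of morphisms as recorded, for example, in \cite{Stacks} and \cite{Olsson2}.
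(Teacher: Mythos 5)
Your proof is correct and takes a more direct route than the paper's. Both handle the finite-type claim identically. For the diagonal, the paper examines the fibers of $R_\Delta \to U$, i.e., the stabilizer subgroup schemes of $T$-points of $U$ inside $\Gb_m$, and concludes from their being affine of finite type. You instead base change $\Delta$ along the smooth cover $U \times U \to \Pcv \times \Pcv$, which by the standard Cartesian square for a quotient stack reduces the claim to the single morphism $(s,t)\colon U \times \Gb_m \to U \times U$. This is arguably the cleaner reduction: it is exactly the diagonal pulled back to a smooth atlas of $\Pcv \times \Pcv$, and avoids having to pass from automorphism groups (the fibers over the diagonal of $U\times U$) to general $\underline{\mathrm{Isom}}$ spaces. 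One small point to tighten: ``restriction of a morphism of affine schemes'' is not automatically affine, so you should say either that $U \times \Gb_m$ is precisely the full preimage of $U \times U$ under the affine map $\Ab^{N+1} \times \Gb_m \to \Ab^{N+1}\times\Ab^{N+1}$, $(u,g) \mapsto (u, g\cdot u)$, making $(s,t)$ a genuine base change of an affine morphism along the open immersion $U \times U \hookrightarrow \Ab^{N+1}\times\Ab^{N+1}$; or, equivalently, that $(s,t)$ factors as the closed graph embedding $(u,g)\mapsto(u,g\cdot u,g)$ into $(U\times U)\times\Gb_m$ followed by the affine projection to $U\times U$. With that small polish, your argument is complete.
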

    
    \begin{proof}
        Since the smooth schematic cover $\Ab_{x_0, \dotsc, x_N}^{N+1} \setminus 0$ of $\Pcv$ is of finite type over $K$, $\Pcv$ is of finite type over $K$ as well. It remains to prove the properties of the diagonal of $\Pcv$. Choose any $T$-point $x=(x_0,\dotsc,x_N)$ of $U:=\Ab^{N+1}_K \setminus 0$. The fiber over $x$ of $R_{\Delta} \rightarrow U$ as in Corollary~\ref{cor:presentation_inertia} is a proper subgroup scheme of $\Gb_m$ (over $T$), which is always affine of finite type over $T$. Henceforth, the diagonal of $\Pcv$ satisfies the desired properties.
    \end{proof}
    
    However, when $K=\Fb_p$ for some prime $p$, $\Pc(1,p)$ is not Deligne-Mumford, as $\underline{\Aut}_{[0:1]} \cong \mu_p$, which is not formally unramified over $\Fb_p$. Nevertheless, the following proposition shows that any $\Pcv$ behaves well in most characteristics as a tame Deligne--Mumford stack:
    
    \begin{prop}\label{prop:wtprojtame}
        The weighted projective stack $\Pcv = \Pc(\lambda_0, \dotsc, \lambda_N)$ is a tame Deligne--Mumford stack over $K$ if $\mathrm{char}(K)$ does not divide $\lambda_i \in \N$ for every $i$.
    \end{prop}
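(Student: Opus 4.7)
The plan is to verify the Deligne--Mumford property and tameness directly from the quotient presentation $\Pcv \cong [(\Ab_{x_0,\dotsc,x_N}^{N+1}\setminus 0)/\Gb_m]$ by computing the stabilizer subgroup schemes of every geometric point. Since $\Gb_m$ is smooth and the finite type affine diagonal condition is already established in Lemma~\ref{lem:wtproj_prop}, what remains is to check that every geometric stabilizer is a finite étale group scheme whose order is invertible in $K$; this is precisely the condition for $\Pcv$ to be a tame Deligne--Mumford stack over $K$.

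Given a geometric point $x = (x_0,\dotsc,x_N)$ of $\Ab^{N+1}_K\setminus 0$, set $I_x := \{i : x_i \neq 0\}$, which is nonempty by hypothesis on $x$. Since $\zeta \in \Gb_m$ acts by $\zeta \cdot x = (\zeta^{\lambda_0}x_0,\dotsc,\zeta^{\lambda_N}x_N)$, the condition $\zeta \cdot x = x$ translates to $\zeta^{\lambda_i} = 1$ for each $i \in I_x$, with no constraint for $i \notin I_x$. Hence the stabilizer group scheme at $x$ is
\[
\Stab(x) \;=\; \bigcap_{i \in I_x}\mu_{\lambda_i} \;=\; \mu_{d_x}, \qquad d_x := \gcd_{i \in I_x}\lambda_i,
\]
and in particular $d_x$ divides $\lambda_i$ for at least one index $i$. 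Under the hypothesis that $\mathrm{char}(K) \nmid \lambda_i$ for every $i$, it follows that $\mathrm{char}(K) \nmid d_x$, so $\mu_{d_x}$ is a finite étale $K$--group scheme whose order is prime to $\mathrm{char}(K)$, and is therefore linearly reductive.

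Invoking the standard characterizations---a quotient $[U/G]$ with $G$ smooth is Deligne--Mumford iff all geometric stabilizers are unramified (equivalently étale), and a Deligne--Mumford stack is tame iff every geometric stabilizer has order coprime to $\mathrm{char}(K)$---the proposition follows. The only subtle point is ensuring that the Deligne--Mumford property can indeed be tested on geometric stabilizers via the smooth presentation $\Ab^{N+1}\setminus 0 \to \Pcv$, which is standard. The hypothesis on $\mathrm{char}(K)$ is sharp: it precisely excludes the pathology illustrated by $\Pc_{\Fb_p}(1,p)$, where the infinitesimal stabilizer $\mu_p$ at $[0:1]$ violates both properties in characteristic $p$.
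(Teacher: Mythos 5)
Your proof is correct and takes essentially the same route as the paper: compute geometric stabilizers from the $\Gb_m$-quotient presentation, observe they are finite cyclic of order dividing some $\lambda_i$, and then invoke the standard characterizations of tameness (linear reductivity of stabilizers, via \cite[Theorem 3.2]{AOV}) and the Deligne--Mumford property (unramified diagonal, via \cite[Theorem 8.3.3]{Olsson2}). Your identification of the stabilizer as precisely $\mu_{d_x}$ with $d_x=\gcd_{i\in I_x}\lambda_i$ is a slight sharpening of the paper's phrasing, but the argument is the same.
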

    
    \begin{proof}
        For any algebraically closed field extension $\overline{K}$ of $K$, any point $y \in \Pcv(\overline{K})$ is represented by the coordinates $(y_0,\dotsc,y_N) \in \Ab_{\overline{K}}^{N+1}$ with its stabilizer group as the subgroup of $\Gb_m$ fixing $(y_0,\dotsc,y_N)$. Hence, any stabilizer group of such $\overline{K}$-points is $\Zb/u\Zb$ where $u$ divides $\lambda_i$ for some $i$. Since the characteristic of $K$ does not divide the orders of $\Zb/\lambda_i\Zb$ for any $i$, the stabilizer group of $y$ is $\overline{K}$-linearly reductive. Hence, $\Pcv$ is tame by \cite[Theorem 3.2]{AOV}. Note that the stabilizer groups constitute fibers of the diagonal $\Delta: \Pcv \rightarrow \Pcv \times_K \Pcv$. Since $\Pcv$ is of finite type and $\Zb/u\Zb$'s are unramified over $K$ whenever $u$ does not divide $\lambda_i$ for some $i$, $\Delta$ is unramified as well. Therefore, $\Pcv$ is also Deligne--Mumford by \cite[Theorem 8.3.3]{Olsson2}.
    \end{proof}
    
    \par The tameness is analogous to flatness for stacks in positive/mixed characteristic as it is preserved under base change by \cite[Corollary 3.4]{AOV}. Moreover, if a stack $\mathcal X$ is tame and Deligne--Mumford, then the formation of the coarse moduli space $c: \mathcal X \rightarrow X$ commutes with base change as well by \cite[Corollary 3.3]{AOV}.
    
    \begin{exmp}
        When the characteristic of the field $K$ is not equal to 2 or 3, \cite[Proposition 3.6]{Hassett2} shows that one example is given by the proper Deligne--Mumford stack of stable elliptic curves $(\Me)_K \cong [ (\Spec~K[a_4,a_6]-(0,0)) / \Gb_m ] = \Pc_K(4,6)$ by using the short Weierstrass equation $y^2 = x^3 + a_4x + a_6x$, where $\zeta \cdot a_i=\zeta^i \cdot a_i$ for $\zeta \in \Gb_m$ and $i=4,6$. Thus, $a_i$'s have degree $i$'s respectively. Note that this is no longer true if characteristic of $K$ is 2 or 3, as the Weierstrass equations are more complicated. 
    \end{exmp}
    
    In the proof of Lemma~\ref{lem:wtproj_prop}, we have shown that $R_{\Delta} \rightarrow U$ is proper, implying that $\pi_2(R_{\Delta}) \subset \Gb_m$ is a proper subgroup scheme, i.e., supported on finitely many closed points. Thus, we can apply the decomposition in Definition~\ref{def:decomp_conjugate} to the inertia stack $\Ic(\Pcv)$:
    \begin{prop}\label{prop:wtproj_inertia_decomp}
        For any $N$-dimensional weighted projective stack $\Pc_K(\vec\lambda)$, Definition~\ref{def:decomp_conjugate} describes connected components of $\Ic(\Pc_K(\vec\lambda))$:
        \[
            \Ic(\Pc_K(\vec\lambda)) \cong \bigsqcup_{g \in |(\Gb_m)_K|} \Pc_{\kappa(g)}(\vec\lambda_{I_g})
        \]
        where $|(\Gb_m)_K|$ is set of closed points of $(\Gb_m)_K$, $I_g$ is the largest subset of $\{0,\dotsc,N\}$ such that $\mathrm{ord}(g)$ divides $\gcd_{i \in I_g}(\lambda_i)$, and $\vec\lambda_{I_g}$ is the subtuple of $\vec{\lambda}$ indexed by $I_g \subset \{0,\dotsc,N\}$.
    \end{prop}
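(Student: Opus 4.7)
The plan is to apply Corollary~\ref{cor:presentation_inertia} to the tautological presentation $\Pcv = [U/\Gb_m]$ with $U := \Ab_K^{N+1} \setminus 0$ and the weighted $\Gb_m$-action. Since $\Gb_m$ is abelian, conjugation is trivial, and the set $Cl(\Gb_m)$ of conjugacy classes simply equals the set $|(\Gb_m)_K|$ of closed points; moreover every centralizer $C_{\Gb_m}(g)$ is the full group $\Gb_m$. Before invoking Definition~\ref{def:decomp_conjugate}, I will verify its finiteness hypothesis: a closed point $g \in \Gb_m$ belongs to $\pi_2(R_\Delta)$ iff some nonzero $(x_0,\dotsc,x_N) \in U$ satisfies $g^{\lambda_i}x_i = x_i$ for all $i$, which forces $g^{\lambda_j} = 1$ for at least one index $j$. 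Hence $\pi_2(R_\Delta) \subset \bigcup_{i=0}^N \mu_{\lambda_i} \subset \Gb_m$, a finite union of zero-dimensional group schemes, which has only finitely many closed points as required.

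Next I will compute the fixed locus $R_{\Delta,g}$ for an arbitrary closed point $g$ of order $r$. From the equation $g^{\lambda_i} x_i = x_i$, a point $(x_0,\dotsc,x_N)$ is fixed by $g$ exactly when $x_i = 0$ for every index $i$ with $g^{\lambda_i} \neq 1$, i.e., for every $i$ with $r \nmid \lambda_i$. Setting $I_g := \{i : r \mid \lambda_i\}$, this fixed subscheme, base-changed to the residue field $\kappa(g)$, is canonically the complement of the origin in the coordinate subspace $\Ab^{|I_g|}_{\kappa(g)}$ with coordinates $(x_i)_{i \in I_g}$. Observe that $I_g$ is tautologically maximal with the property that $r = \mathrm{ord}(g)$ divides $\gcd_{i \in I_g}(\lambda_i)$.

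Finally, combining Corollary~\ref{cor:presentation_inertia} with Definition~\ref{def:decomp_conjugate}, the component $\Ic_g(\Pcv)$ is the quotient stack $[R_{\Delta,g}/\Gb_m]$, where $\Gb_m$ acts on the $\kappa(g)$-scheme $\Ab^{|I_g|}_{\kappa(g)} \setminus 0$ through the restricted weight tuple $\vec\lambda_{I_g}$ inherited from $U$. By Definition~\ref{def:wtproj}, this quotient is precisely $\Pc_{\kappa(g)}(\vec\lambda_{I_g})$, and summing over $g \in |(\Gb_m)_K|$ yields the claimed disjoint union. The main subtlety I anticipate is keeping careful track of fields of definition: a single closed point $g$ of $\Gb_m$ may correspond to a Galois orbit of geometric roots of unity, and the component it indexes is naturally defined over $\kappa(g)$ rather than over $K$, which is exactly the base change encoded in the statement. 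A minor secondary point is that Definition~\ref{def:decomp_conjugate} is phrased for Deligne--Mumford quotients, but the argument above uses only the finiteness of $|\pi_2(R_\Delta)|$, which holds for $\Pcv$ in arbitrary characteristic, so the decomposition extends verbatim to the non-tame cases excluded by Proposition~\ref{prop:wtprojtame}.
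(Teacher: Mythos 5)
Your proof matches the paper's: both compute $R_{\Delta,g}$ as the coordinate subspace $\{x_i = 0 : g^{\lambda_i} \neq 1\}$, identify $I_g$ with the indices $i$ satisfying $\mathrm{ord}(g) \mid \lambda_i$, and use commutativity of $\Gb_m$ (so $Cl(\Gb_m) = |(\Gb_m)_K|$ and $C_{\Gb_m}(g) = \Gb_m$) to produce the weighted projective stacks after quotienting. Your explicit check of the finiteness hypothesis via $\pi_2(R_\Delta) \subset \bigcup_i \mu_{\lambda_i}$ and your care with fields of definition are worthwhile elaborations that the paper relegates to the surrounding discussion.

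Your closing remark, however, is wrong: finiteness of the closed-point set $|\pi_2(R_\Delta)|$ does \emph{not} suffice to carry the decomposition into the non-tame case. If $\mathrm{char}(K) = p$ divides some $\lambda_j$, then $R_\Delta \subset U \times \Gb_m$ is not reduced. Concretely, for $\Pc(1,p)$ over $\Fb_p$ the restriction of $R_\Delta$ to $\{x_0 = 0\}$ is $\Gb_m \times \mu_p$ with $\mu_p = \Spec\bigl(\Fb_p[g]/(g-1)^p\bigr)$ an infinitesimal fat point supported at $g = 1$; the decomposition indexed by closed points of $\Gb_m$ recovers only the reduced locus $U \times \{1\}$ and misses the nilpotent thickening entirely, so $\Ic(\Pc(1,p)) \not\cong \Pc(1,p)$, contradicting what your extension would assert. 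The hypothesis you actually need is that $R_\Delta$ be reduced (equivalently, that all stabilizers be \'etale), which is precisely what the tame Deligne--Mumford condition in Definition~\ref{def:decomp_conjugate} and Proposition~\ref{prop:wtprojtame} supplies; finiteness of $\pi_2(R_\Delta)$ as a set is necessary but not sufficient.
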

    Note that $I_{g}=I_{g'}$ when $\mathrm{ord}(g)=\mathrm{ord}(g')$, as any subgroup of $\Gb_m$ is cyclic. Also, when $|I_g|=0$, then $\Pc(\vec\lambda_{I_g}) = \emptyset$ vacuously.
    \begin{proof}[Proof of Proposition~\ref{prop:wtproj_inertia_decomp}]
        It suffices to show that $R_{\Delta,g}$ is the subspace 
        \[
        \{(x_0,\dotsc,x_N,g) \in (\Ab^{N+1}\setminus 0) \times \Gb_m \; | \; x_i = 0 \; \mathrm{if} \; \mathrm{ord}(g) \text{ does not divide } \lambda_i\}
        \]
        as commutativity of $\Gb_m$ implies that $C_G(g)=\Gb_m$ for any $g \in \Gb_m$ (here, $g$ as a closed point of $\Gb_m$ in above coordinates is equivalent to taking a Galois orbit of a representative of $g$ as a $\kappa(g)$-point of $\Gb_m$). Note that this space is a $\kappa(g)$-variety as its projection onto $\Gb_m$ maps to $g \in \Gb_m$. For any $g \in |(\Gb_m)_K|$, $x=(x_0,\dotsc,x_N) \in \Ab^{N+1}$ is fixed by $g$ iff $g^{\lambda_i}x_i=x_i$ for all $i$. Whenever $g^{\lambda_i} \neq 1$, $x_i$ must be zero. Thus, $x$ lies in the closed subscheme $\{x_i=0 \; : \; \forall i, \; g^{\lambda_i} \neq 1\}$, which is exactly the desired subspace.
    \end{proof}
    
    \par We now generalize the Hom stack formulation to $\Pcv$ as follows:

    \begin{prop}\label{prop:DMstack}
    Hom stack $\Hom_n(\Pb^1,\Pcv)$ with weight $\vec{\lambda} = (\lambda_0, \dotsc, \lambda_N)$, which parameterize degree $n \in \Nb$ $K$-morphisms $f:\Pb^1 \rightarrow \Pcv$ with $f^*\Oc_{\Pcv}(1) \cong \Oc_{\Pb^1}(n)$ over a base field $K$ with $\mathrm{char}(K)$ not dividing $\lambda_i \in \N$ for every $i$, is a smooth separated tame Deligne--Mumford stack of finite type with $\mathrm{dim_K}\left(\Hom_n(\Pb^1,\Pcv)\right) = |\vec{\lambda}|n + N$ where $|\vec{\lambda}|:=\sum\limits_{i=0}^{N} \lambda_i$ .
    \end{prop}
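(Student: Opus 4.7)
The plan is to give an explicit presentation of $\Hom_n(\Pb^1,\Pcv)$ as a quotient stack and then verify each property from it. Using the presentation $\Pcv = [(\Ab^{N+1}\setminus 0)/\Gb_m]$, a $K$-morphism $f: \Pb^1 \to \Pcv$ with $f^*\Oc_{\Pcv}(1) \cong \Oc_{\Pb^1}(n)$ corresponds (in the usual $\Gb_m$-torsor interpretation) to a $\Gb_m$-equivariant map from the total space of $\Oc_{\Pb^1}(n)^\times$ to $\Ab^{N+1}\setminus 0$, which in turn is the data of a tuple $(s_0,\dotsc,s_N) \in \bigoplus_{i=0}^{N} H^0(\Pb^1, \Oc(\lambda_i n))$ with no common zero on $\Pb^1$. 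Promoting this to families and tracking the $\Gb_m$-ambiguity, we obtain
\[
\Hom_n(\Pb^1,\Pcv) \cong [U_n/\Gb_m],
\]
where $U_n \subset \bigoplus_{i=0}^{N} H^0(\Pb^1, \Oc(\lambda_i n))$ is the open subscheme parameterizing tuples with no common vanishing, and $\zeta \in \Gb_m$ acts by $\zeta \cdot (s_0,\dotsc,s_N) = (\zeta^{\lambda_0}s_0,\dotsc,\zeta^{\lambda_N}s_N)$.

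From this presentation, the dimension count is straightforward: each $H^0(\Pb^1,\Oc(\lambda_i n))$ has dimension $\lambda_i n + 1$, so $\dim U_n = |\vec{\lambda}|n + (N+1)$, and quotienting by the one-dimensional $\Gb_m$ gives $\dim \Hom_n(\Pb^1,\Pcv) = |\vec{\lambda}|n + N$. Smoothness and finite type follow from $U_n$ being an open subscheme of an affine space and $\Gb_m$ being smooth of finite type. For the tame Deligne--Mumford property, note that the stabilizer of $(s_0,\dotsc,s_N) \in U_n$ under $\Gb_m$ is exactly $\mu_d \subset \Gb_m$ where $d = \gcd\{\lambda_i : s_i \neq 0\}$; under the hypothesis that $\mathrm{char}(K) \nmid \lambda_i$ for every $i$, each such $\mu_d$ is étale and linearly reductive over $K$, so the same combination of \cite[Theorem 3.2]{AOV} and \cite[Theorem 8.3.3]{Olsson2} used in Proposition~\ref{prop:wtprojtame} yields that $[U_n/\Gb_m]$ is tame and Deligne--Mumford.

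The remaining point, and the main obstacle, is separatedness, which amounts to properness of the action map $\Gb_m \times U_n \to U_n \times U_n$, $(\zeta,u) \mapsto (\zeta\cdot u, u)$. Since all weights $\lambda_i$ are strictly positive, every $\Gb_m$-orbit on $\Ab^{N+1}\setminus 0$ has the origin as its unique additional limit, and the origin is excluded; hence every orbit is closed in $U_n$. Combined with the finiteness of stabilizers established above, the valuative criterion for properness applied to the action map goes through (any $\Gb_m$-valued test family extending over a generic point extends uniquely because the only obstruction is approaching $0 \in \Ab^{N+1}$, which lies outside $U_n$). Thus $[U_n/\Gb_m]$ is separated. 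The trickiest step in executing this plan will be the careful bookkeeping in establishing the quotient presentation in families, since one must match $\Gb_m$-torsors with line bundle data compatible with pullback of $\Oc_{\Pcv}(1)$; once that is done, smoothness, tameness, and dimension are immediate, and separatedness reduces to the elementary positivity observation above.
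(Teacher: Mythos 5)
Your proof takes essentially the same route as the paper: establish the quotient presentation $[U_n/\Gb_m]$ with $U_n$ the no-common-zero tuples and $\Gb_m$ acting with weights $\lambda_i$, then read off dimension, tameness, and separatedness from it. The one stylistic difference is that you derive Deligne--Mumford-ness directly from the étaleness of the $\mu_d$ stabilizers, whereas the paper first cites \cite[Theorem 1.1]{Olsson} for smoothness and the DM property before writing down the same presentation; your version is more self-contained, but both hinge on the same quotient description and the same positivity-of-weights argument for separatedness.
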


    \begin{proof} 
    $\Hom_n(\Pb^1,\Pcv)$ is a smooth Deligne--Mumford stack by \cite[Theorem 1.1]{Olsson}. It is isomorphic to the quotient stack $[T/\Gb_m]$, admitting a smooth schematic cover $T \subset \left( \bigoplus\limits_{i=0}^{N} H^0(\Oc_{\Pb^1}(\lambda_i \cdot n))\right) \setminus 0$, parameterizing the set of tuples $(u_0, \dotsc, u_N)$ of sections with no common zero (here, we interpret $H^0(\Oc_{\Pb^1}(\lambda_i \cdot n))$ as an affine space over $K$ of appropriate dimension, induced by its $K$-vector space structure). The $\Gb_m$ action on $T$ is given by $\zeta \cdot (u_0, \dotsc, u_N)=(\zeta^{\lambda_0}u_0,\dotsc,\zeta^{\lambda_N}u_N)$ . Note that 
    \[\dim T=\sum_{i=0}^N h^0(\Oc_{\Pb^1}(\lambda_i \cdot n))=\sum_{i=0}^N (\lambda_i \cdot n+1)=|\vec{\lambda}|n+N+1,\]
    implying that $\dim \Hom_n(\Pb^1,\Pcv)=|\vec{\lambda}|n+N$ since $\dim \Gb_m=1$.
    
    As $\Gb_m$ acts on $T$ properly with positive weights $\lambda_i \in \N$ for every $i$, the quotient stack $[T/\Gb_m]$ is separated. It is tame as in \cite[Theorem 3.2]{AOV} since $\mathrm{char}(K)$ does not divide $\lambda_i$ for every $i$ .
    \end{proof}
    
    \begin{rmk}\label{rmk:Hom_substack_wtproj}
        In the proof of Proposition~\ref{prop:DMstack}, we showed that $\Hom_n(\Pb^1,\Pcv) \cong [T/\Gb_m]$ where $T$ is an open dense $\Gb_m$-invariant subscheme of $\oplus H^0(\Oc_{\Pb^1}(\lambda_i\cdot n))$ not containing zero, where for each $i$, $\Gb_m$ acts on $H^0(\Oc_{\Pb^1}(\lambda_i\cdot n))$ with weight $\lambda_i$. In fact, this remains true even when the characteristic assumption fails, as the arguments still follow. Since $h^0(\Oc_{\Pb^1}(\lambda_i\cdot n))=n\lambda_i+1$, $\Hom_n(\Pb^1,\Pcv)$ is an open substack of $\Pov$ where 
        \[
            \Lambdavec := (\underbrace{\lambda_0, \ldots, \lambda_0}_{n\lambda_0+1 \text{ times}}, \ldots, \underbrace{\lambda_N, \ldots, \lambda_N}_{n\lambda_N+1 \text{ times}} ) \,.
        \]
        Furthermore, $(u_0,\dotsc,u_N) \in \oplus H^0(\Oc_{\Pb^1}(\lambda_i\cdot n))$ lies in $T$ iff $u_i$'s have no common zero on $\Pb^1$. By Lemma~\ref{lem:wtproj_prop}, $\Hom_n(\Pb^1,\Pcv)$ is of finite type with finite type affine diagonal (without any condition on the base field $K$) $\qed$
    \end{rmk}

    \par Similar to $\Pcv$, the inertia stack $\Ic(\Hom_n(\Pb^1,\Pcv))$ also admits a clear decomposition (i.e., each summand is the Hom stack $\Hom_n(\Pb^1,\Pc(\vec\lambda_{I_g}))$) that will play a crucial role.

    \begin{prop}\label{prop:Hom_inertia_decomp}
        The inertia stack of the Hom stack $\Hom_n(\Pb^1,\Pcv)$ admits the following decomposition into connected components as in Definition~\ref{def:decomp_conjugate}, where $I_g$ and $\vec\lambda_{I_g}$ are the same as in Proposition~\ref{prop:wtproj_inertia_decomp}:
        \[
            \Ic(\Hom_n(\Pb_K^1,\Pc_K(\vec\lambda))) \cong \bigsqcup_{g \in |(\Gb_m)_K|} \Hom_n(\Pb_{\kappa(g)}^1,\Pc_{\kappa(g)}(\vec\lambda_{I_g}))
        \]
    \end{prop}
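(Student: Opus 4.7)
The plan is to imitate the strategy used in Proposition~\ref{prop:wtproj_inertia_decomp}, replacing the cover $\Ab_{x_0,\dotsc,x_N}^{N+1}\setminus 0$ of $\Pcv$ by the corresponding cover of the Hom stack. Concretely, by Remark~\ref{rmk:Hom_substack_wtproj}, we have $\Hom_n(\Pb^1,\Pcv)\cong [T/\Gb_m]$, where $T$ is the open $\Gb_m$-invariant subscheme of $\bigoplus_{i=0}^N H^0(\Oc_{\Pb^1}(\lambda_i\cdot n))$ consisting of tuples $(u_0,\dotsc,u_N)$ with no common zero on $\Pb^1$, and $\Gb_m$ acts by $\zeta\cdot(u_0,\dotsc,u_N)=(\zeta^{\lambda_0}u_0,\dotsc,\zeta^{\lambda_N}u_N)$. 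By Corollary~\ref{cor:presentation_inertia}, $\Ic(\Hom_n(\Pb^1,\Pcv))\cong [R_\Delta/\Gb_m]$ where $R_\Delta\subset T\times\Gb_m$ is the locus of pairs $((u_i),g)$ with $g\cdot(u_i)=(u_i)$.

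Next I would check that Definition~\ref{def:decomp_conjugate} applies. Lemma~\ref{lem:wtproj_prop} and Remark~\ref{rmk:Hom_substack_wtproj} already give finite type with affine diagonal. For the finiteness of $\pi_2(R_\Delta)\subset\Gb_m$, observe that if $((u_i),g)\in R_\Delta$ then $g^{\lambda_i}=1$ for every $i$ such that $u_i\neq 0$; since $T$ requires at least one $u_i$ to be nonzero, the order of $g$ must divide some $\lambda_i$, hence $\pi_2(R_\Delta)$ is supported on the finitely many closed points of $\Gb_m$ whose order divides some $\lambda_i$. Because $\Gb_m$ is commutative, every closed point is its own conjugacy class and its centralizer is all of $\Gb_m$, so Definition~\ref{def:decomp_conjugate} yields
\[
\Ic(\Hom_n(\Pb^1,\Pcv)) \cong \bigsqcup_{g\in|(\Gb_m)_K|} [R_{\Delta,g}/\Gb_m],
\]
where each quotient lives over $\kappa(g)$.

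The remaining step is to identify each stratum $[R_{\Delta,g}/\Gb_m]$ with the corresponding sub-Hom stack. Fix a closed point $g$ with residue field $\kappa(g)$. Over $\kappa(g)$, the fiber $R_{\Delta,g}$ parametrizes those $(u_0,\dotsc,u_N)\in T_{\kappa(g)}$ with $g^{\lambda_i}u_i=u_i$ for every $i$; equivalently, $u_i=0$ whenever $\mathrm{ord}(g)\nmid\lambda_i$, i.e.\ whenever $i\notin I_g$. Hence $R_{\Delta,g}$ is precisely the open subscheme of $\bigoplus_{i\in I_g}H^0(\Oc_{\Pb^1_{\kappa(g)}}(\lambda_i\cdot n))$ consisting of tuples with no common zero on $\Pb^1_{\kappa(g)}$, equipped with the $\Gb_m$-action of weight $\lambda_i$ on the $i$-th factor. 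Comparing with the presentation in Remark~\ref{rmk:Hom_substack_wtproj} applied to $\Pc_{\kappa(g)}(\vec\lambda_{I_g})$, this is exactly the smooth cover of $\Hom_n(\Pb^1_{\kappa(g)},\Pc_{\kappa(g)}(\vec\lambda_{I_g}))$, giving the desired isomorphism of strata.

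I expect the only subtle point to be the no-common-zero condition: one must verify that restricting to the coordinates indexed by $I_g$ preserves it, which holds since the coordinates outside $I_g$ are forced to vanish identically on $R_{\Delta,g}$, so a common zero of $(u_i)_{i\in I_g}$ would already be a common zero of $(u_0,\dotsc,u_N)$, contradicting membership in $T$. Everything else is a direct book-keeping exercise parallel to Proposition~\ref{prop:wtproj_inertia_decomp}, with Galois orbits over $K$ packaging closed points of $\Gb_m$ of a given order into the index set $|(\Gb_m)_K|$.
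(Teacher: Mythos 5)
Your proof is correct and is essentially the same computation as the paper's, differing only in packaging: the paper deduces the decomposition by restricting the already-established decomposition of $\Ic(\Pov)$ from Proposition~\ref{prop:wtproj_inertia_decomp} to the open substack $\Hom_n(\Pb^1,\Pcv) \subset \Pov$ (using that inertia commutes with open immersions), while you redo the argument directly from the quotient presentation $[T/\Gb_m]$ via Corollary~\ref{cor:presentation_inertia} and Definition~\ref{def:decomp_conjugate}, verifying the finiteness of $\pi_2(R_\Delta)$ explicitly. Both reduce to the same key observation — that $g$ fixes $(u_i)$ iff $u_i=0$ whenever $\mathrm{ord}(g) \nmid \lambda_i$, and that the no-common-zero condition is preserved upon restricting to coordinates indexed by $I_g$ — so there is no substantive gap.
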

    Note that $\Hom_n(\Pb^1,\Pc(\vec\lambda_{I_g})) = \emptyset$ whenever $|I_g| \le 1$, as there are no maps from $\Pb^1$ to $\Pc(\vec\lambda_{I_g})$ where the pullback of $\Oc(1)$ to $\Pb^1$ has degree $n$.
    \begin{proof}[Proof of Proposition~\ref{prop:Hom_inertia_decomp}]
        By Remark~\ref{rmk:Hom_substack_wtproj}, $\Hom_n(\Pb^1,\Pcv)$ is an open substack of $\Pov$. Restricting the decomposition of $\Ic(\Pov)$ as in Proposition~\ref{prop:wtproj_inertia_decomp} to $\Hom_n(\Pb^1,\Pcv)$,
        \[
            \Ic(\Hom_n(\Pb^1,\Pcv)) \cong \bigsqcup_{g \in |(\Gb_m)_K|} \left[T \cap \{\vec{u} \in \oplus H^0(\Oc_{\Pb^1}(\lambda_i\cdot n)) \; : \; u_i=0 \text{ if } g^{\lambda_i} \neq 1\}/\Gb_m \right]_{\kappa(g)}
        \]
        Since $\vec{u} \in T$ iff $u_i$'s have no common zeroes, each summand is isomorphic to $\Hom_n(\Pb^1,\Pc(\vec\lambda_{I_g}))$.
    \end{proof}

    %\medskip

    \section{Motive/Point count of $\mathrm{Hom}$ and inertia stacks}
    \label{sec:count}

    \par In this section, we use the idea of cut-and-paste by Grothendieck and acquire the motive $\{\Xc\} \in K_0(\mathrm{Stck}_\mathrm{K})$ of moduli stack $\Xc$ in the Grothendieck ring of $K$--stacks; in fact, we show that $\{\Xc\}$ is a polynomial in the Lefschetz motive $\Lb:=\{\Ab^1_K\}$. Particularly, we acquire the motive $\{\mathrm{Poly}_1^{(d_1,\dotsc,d_m)}\}$ of the space of monic coprime polynomials through filtration which in turn provides the motive $\left\{\Hom_n(\Pb^1,\Pcv)\right\}$ of the Hom stack through stratification. In the end, we acquire the weighted point count of the inertia stack $\left\{\Ic\left(\Hom_n(\Pb^1,\Pcv)\right)\right\}$ of the Hom stack over $\Fb_q$ through the decomposition of Proposition~\ref{prop:Hom_inertia_decomp}.

    \medskip

    \par First, we recall the definition of the Grothendieck ring of algebraic stacks following \cite{Ekedahl}.

    \begin{defn}\label{defn:GrothringStck}
        \cite[\S 1]{Ekedahl}
        Fix a field $K$. Then the \emph{Grothendieck ring $K_0(\mathrm{Stck}_K)$ of algebraic stacks of finite type over $K$ all of whose stabilizer group schemes are affine} is an abelian group generated by isomorphism classes of $K$-stacks $\{\Xc\}$ of finite type, modulo relations:
        \begin{itemize}
            \item $\{\Xc\}=\{\Zc\}+\{\Xc \setminus \Zc\}$ for $\Zc \subset \Xc$ a closed substack,
            \item $\{\Ec\}=\{\Xc \times \Ab^n \}$ for $\Ec$ a vector bundle of rank $n$ on $\Xc$.
        \end{itemize}
        Multiplication on $K_0(\mathrm{Stck}_K)$ is induced by $\{\Xc\}\{\Yc\}:=\{\Xc \times_K \Yc\}$. There is a distinguished element $\Lb:=\{\A^1\} \in K_0(\mathrm{Stck}_K)$, called the \emph{Lefschetz motive}.
    \end{defn}
    
    \par Given an algebraic $K$-stack $\Xc$ of finite type with affine diagonal, the \emph{motive} of $\Xc$ refers to $\{\Xc\} \in K_0(\text{Stck}_K)$.

    \medskip

    \par As the Grothendieck ring $K_0(\mathrm{Stck}_K)$ is the universal object for \textit{additive invariants}, it is easy to see that when $K=\Fb_q$, the assignment $\{X\} \mapsto \#_q(X)$ gives a well-defined ring homomorphism $\#_q: K_0(\mathrm{Stck}_{\Fb_q}) \rightarrow \Q$ (c.f. \cite[\S 2]{Ekedahl}) rendering the weighted point count of a stack $\Xc$ over $\Fb_q$. Note that $\#_q(\Xc) < \infty$ when $\Xc$ is of finite type (see discussion right below Definition~\ref{def:wtcount}).

    \begin{comment}

    \begin{prop}[Proposition 5.3 of \cite{FLNU}]\label{prop:Ring_Map_Inertia}
        The inertia map $\Xc \mapsto \Ic(\Xc)$ induces a ring map
        \[
            I: K_0(\mathrm{Stck}_K) \to K_0(\mathrm{Stck}_K)
        \]
    \end{prop}
    \begin{proof}
        By the Proof of \cite[Proposition 5.3.]{FLNU}, the inertia map $I$
    \end{proof}

    \par Discuss the subtlety \cite[Proposition 5.3.]{HP}.

    \medskip

    \par This above implies that we can study the enumerations of families $N(\Bc) := |\hat \Xc_{\Bc}(\Fb_q)/\sim|$ over finite fields in a natural functorial manner through the theory of motives of $\{\Ic(\hat \Xc_{\Bc})\}$ using the idea of cut-and-paste.

    \medskip

    \par Combined with Theorem~\ref{thm:ptcounts} and the discussion afterwards with $\hat \Xc_{\Bc}:=\bigsqcup_{\Bc' \le \Bc} \Xc_{\Bc'}$ , we have that
    $$ \{\Ic(\hat \Xc_{\Bc})\} \mapsto \#_q(\Ic(\hat \Xc_{\Bc})) = |\hat \Xc_{\Bc}(\Fb_q)/\sim| $$
    which leads to a rather interesting conclusion that we can use the idea of geometric decompositions of the relevant inertia stack $\Ic(\hat \Xc_{\Bc})$ of an arithmetic moduli stack $\hat \Xc_{\Bc}$ as in Definition~\ref{def:decomp_conjugate} and directly extract number theoretically relevant countings $N(\Bc)$ with bounded height $\Bc$ as demonstrated in the rest of the paper. 

    %This direction warrants further explorations where we invite the interested readers to work on.

    %  \sum_{\Bc' \le \Bc} \left\{\Ic(\Xc_{\Bc'})\right\} = 

    \end{comment}

    \medskip
    
    \par Since many algebraic stacks can be written locally as a quotient of a scheme by an algebraic group $\Gb_m$, the following lemma (a special case of \cite[\S 1]{Ekedahl}) is very useful:
    
    \begin{lem}\label{lem:Gm_quot}
        \cite[Lemma 15]{HP} For any $\Gb_m$-torsor $\mathcal X \rightarrow \mathcal Y$ of finite type algebraic stacks, we have $\{\mathcal Y\}=\{\mathcal X\}\{\Gb_m\}^{-1}$.
    \end{lem}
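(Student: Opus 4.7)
The plan is to realize the $\Gb_m$-torsor geometrically as the complement of the zero section of a line bundle on $\mathcal{Y}$, and then apply the two defining relations of $K_0(\mathrm{Stck}_K)$ (cut-and-paste and the vector bundle relation). Since $\Gb_m$-torsors on an algebraic stack $\mathcal{Y}$ are classified by $H^1(\mathcal{Y},\Gb_m)=\mathrm{Pic}(\mathcal{Y})$ in the fppf (or equivalently étale) topology, the torsor $\mathcal{X}\to\mathcal{Y}$ corresponds to a unique line bundle $\mathcal{L}$ on $\mathcal{Y}$, and $\mathcal{X}$ is canonically identified with the complement $\mathcal{L}\setminus\mathcal{Y}$ of the zero section inside the total space of $\mathcal{L}$.

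First I would spell out this identification: the total space $\mathcal{L}$ is a rank $1$ vector bundle over $\mathcal{Y}$, the zero section $s_0:\mathcal{Y}\hookrightarrow\mathcal{L}$ is a closed substack, and the $\Gb_m$-action by fiberwise scaling on $\mathcal{L}\setminus s_0(\mathcal{Y})$ makes $\mathcal{L}\setminus s_0(\mathcal{Y})\to\mathcal{Y}$ into a $\Gb_m$-torsor isomorphic to $\mathcal{X}\to\mathcal{Y}$. Next I would apply the vector bundle relation in Definition~\ref{defn:GrothringStck} to get $\{\mathcal{L}\}=\{\mathcal{Y}\times\A^1\}=\{\mathcal{Y}\}\cdot\Lb$, and then the cut-and-paste relation along the closed substack $s_0(\mathcal{Y})\subset\mathcal{L}$ to get $\{\mathcal{L}\}=\{\mathcal{Y}\}+\{\mathcal{X}\}$. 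Combining these yields
\[
\{\mathcal{X}\}=\{\mathcal{Y}\}\cdot(\Lb-1)=\{\mathcal{Y}\}\cdot\{\Gb_m\}.
\]

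Finally, I would invoke the fact that $\{\Gb_m\}=\Lb-1$ is invertible in $K_0(\mathrm{Stck}_K)$ by Ekedahl's presentation (where $\Lb$ and all $\Lb^n-1$ are inverted when passing from $K_0(\mathrm{Var}_K)$ to $K_0(\mathrm{Stck}_K)$), so that we may divide through to obtain $\{\mathcal{Y}\}=\{\mathcal{X}\}\{\Gb_m\}^{-1}$ as required.

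The main conceptual step, and the only place where any care is needed, is the identification of a $\Gb_m$-torsor with the punctured total space of an associated line bundle; this requires the classification of $\Gb_m$-torsors via $\mathrm{Pic}$ for algebraic stacks, which is standard but should be cited carefully. Once this geometric picture is in place, the rest is a direct application of the two generating relations of $K_0(\mathrm{Stck}_K)$ together with the invertibility of $\Lb-1$.
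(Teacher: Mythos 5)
Your proof is correct and follows the same route as the cited \cite[Lemma 15]{HP}: realize the $\Gb_m$-torsor as the punctured total space of the associated line bundle, apply the vector-bundle relation and cut-and-paste along the zero section to obtain $\{\mathcal X\}=\{\mathcal Y\}(\Lb-1)$, and divide by the unit $\{\Gb_m\}=\Lb-1$ (invertible in $K_0(\mathrm{Stck}_K)$ by Ekedahl's localization). You have already flagged the one step needing a citation, namely the identification of $\Gb_m$-torsors on an algebraic stack with $\mathrm{Pic}$, so nothing further is missing.
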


    \par The subsequent proofs involves the following variety of its own interest (a slight generalization of \cite[Definition 1.1]{FW}) :
    
    \begin{defn}\label{def:poly}
        Fix $m \in \Zb_{>0}$ and $d_1,\dotsc,d_m \ge 0$. Define $\mathrm{Poly}_1^{(d_1,\dotsc,d_m)}$ as the set of tuples $(f_1,\dotsc,f_m)$ of monic polynomials in $K[z]$ so that
        \begin{enumerate}
            \item $\deg f_i=d_i$ for each $i$, and
            \item $f_1,\dotsc,f_m$ have no common roots in $\overline{K}$.
        \end{enumerate}
    \end{defn}

    \par Since the set $\mathrm{Poly}_1^{(d_1,\dotsc,d_m)}$ is open inside the affine space (complement of the resultant hypersurface) parameterizing the tuples of monic coprime polynomials of degrees $(d_1,\dotsc,d_m)$, we can endow $\mathrm{Poly}_1^{(d_1,\dotsc,d_m)}$ with a structure of affine variety defined over $\Zb$.

    \medskip
    
    \par Generalizing the proof of \cite[Theorem 1.2]{FW} with the correction from \cite[Proposition 3.1.]{PS}, we find the motive of $\mathrm{Poly}_1^{(d_1,\dotsc,d_m)}$ :
    
    \begin{prop}[Motive of the Poly space $\mathrm{Poly}_1^{(d_1, \cdots, d_m)}$ over $K$]
        \label{prop:poly_m}
        Fix $0 \le d_1 \le d_2 \le \cdots \le d_m$. Then, 
        \[ \left\{\mathrm{Poly}_1^{(d_1, \cdots, d_m)}\right\}=
        \begin{cases}
        \Lb^{d_1 + \cdots + d_m}-\Lb^{d_1 + \cdots + d_m - m+1} & \text{ if } d_1 \neq 0\\
        \Lb^{d_1 + \cdots + d_m} & \text{ if } d_1=0
        \end{cases}
        \]
    \end{prop}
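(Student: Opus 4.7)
\par The plan is to stratify the ambient affine space of all monic tuples by the degree of their gcd, produce a recursive identity in $K_0(\mathrm{Stck}_K)$, and invert it by induction on $d_1$. The base case $d_1 = 0$ is immediate: a monic polynomial of degree zero is forced to equal $1$, so the coprimality condition of Definition~\ref{def:poly} becomes vacuous. Hence $\mathrm{Poly}_1^{(0, d_2, \dotsc, d_m)} \cong \Ab^{d_2 + \cdots + d_m}$, giving motive $\Lb^{d_1 + \cdots + d_m}$ as stated.

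\par For $d_1 \geq 1$, set $D := d_1 + \cdots + d_m$ and regard $\Ab^D$ as the variety of tuples $(f_1, \dotsc, f_m)$ of monic polynomials of degrees $d_1, \dotsc, d_m$. I stratify $\Ab^D$ by the degree $k$ of $h := \gcd(f_1, \dotsc, f_m)$, where $0 \le k \le d_1$. The stratum $S_k$ corresponding to $\deg h = k$ is isomorphic as a variety to $\mathrm{Monic}_k \times \mathrm{Poly}_1^{(d_1 - k, \dotsc, d_m - k)}$ via the multiplication morphism $(h, (g_i)) \mapsto (h g_1, \dotsc, h g_m)$, with inverse obtained by dividing each $f_i$ by $h$. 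Since $\{\mathrm{Monic}_k\} = \Lb^k$, summing the motives of the strata yields the identity
\[
\Lb^D \;=\; \sum_{k=0}^{d_1} \Lb^k \cdot \left\{\mathrm{Poly}_1^{(d_1 - k, \dotsc, d_m - k)}\right\}
\]
in $K_0(\mathrm{Stck}_K)$.

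\par I then induct on $d_1$, with the case $d_1 = 0$ above as the base. For $1 \le k \le d_1 - 1$ the inductive hypothesis gives $\{\mathrm{Poly}_1^{(d_1-k, \dotsc, d_m-k)}\} = \Lb^{D - mk} - \Lb^{D - mk - m + 1}$, while the $k = d_1$ term is $\Lb^{D - m d_1}$ by the base case. Substituting, each inner contribution becomes $\Lb^{D - (m-1)k} - \Lb^{D - (m-1)(k+1)}$, so the sum over $1 \le k \le d_1 - 1$ telescopes and the $k = d_1$ contribution $\Lb^{D - (m-1)d_1}$ cancels the tail, leaving $\Lb^{D - m + 1}$. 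Rearranging the recursion isolates the $k = 0$ term and yields $\{\mathrm{Poly}_1^{(d_1, \dotsc, d_m)}\} = \Lb^D - \Lb^{D - m + 1}$. The main point that requires care is that the stratification $\Ab^D = \bigsqcup_k S_k$ is genuinely a stratification of varieties (not merely a set-theoretic partition of $K$-points): on each $S_k$ the degree of the gcd is constant, so polynomial division by the gcd is algebraic and the trivialization $S_k \cong \mathrm{Monic}_k \times \mathrm{Poly}_1^{(d_1-k, \dotsc, d_m-k)}$ is an isomorphism over $K$. Everything else is formal telescoping in $K_0(\mathrm{Stck}_K)$.
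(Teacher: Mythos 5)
Your overall strategy --- stratify the affine space of monic tuples by the exact degree of their gcd, realize each stratum via the multiplication map, and unwind the resulting telescoping recursion --- is essentially the paper's (and Farb--Wolfson's), and the inductive algebra in your last paragraph is correct. The gap is in the assertion that the multiplication map $\Psi \colon \Ab^k \times \mathrm{Poly}_1^{(d_1-k,\dotsc,d_m-k)} \to S_k$ is an isomorphism of varieties with algebraic inverse ``divide by the gcd''; in your words, ``on each $S_k$ the degree of the gcd is constant, so polynomial division by the gcd is algebraic.'' That is exactly the implicit step in Farb--Wolfson that Park--Spink identified as unjustified and corrected: for $k \ge 1$, $\Psi$ is only shown to be a \emph{piecewise isomorphism}, i.e.\ source and target admit compatible finite decompositions into locally closed pieces on which $\Psi$ restricts to isomorphisms (this is \cite[Proposition 3.1]{PS}, which the paper explicitly invokes as a correction to \cite[Theorem 1.2]{FW}). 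Even at constant gcd-degree, extracting $\gcd(f_1,\dotsc,f_m)$ as a morphism $S_k \to \Ab^k$ is not given by a single regular formula in the coefficients --- the Euclidean algorithm branches --- and one must refine $S_k$ further to get honest inverses piece by piece. Only the $k=0$ stratum is a genuine isomorphism (there $\Psi$ is the identity).

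Fortunately this does not sink your argument: a piecewise isomorphism already implies $\{S_k\} = \Lb^k \cdot \left\{\mathrm{Poly}_1^{(d_1-k,\dotsc,d_m-k)}\right\}$ in $K_0(\mathrm{Stck}_K)$, since the class in the Grothendieck ring is additive over locally closed decompositions. So the recursion and the telescoping computation go through verbatim once you replace ``isomorphism'' by ``piecewise isomorphism'' and either cite or reprove the relevant lemma from \cite{PS}.
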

    
    \begin{proof}
        The proof is analogous to \cite[Theorem 1.2 (1)]{FW}, with the correction from \cite[Proposition 3.1.]{PS}, and is a direct generalization of \cite[Proposition 18]{HP}. Here, we recall the differences to the work in \cite{FW, HP, PS}. \\
        
        \noindent \textbf{Step 1}: The space of $(f_1,\dotsc,f_m)$ monic polynomials of degree $d_1,\dotsc,d_m$ is instead the quotient $\A^{d_1} \times \dotsb \times \A^{d_m}/(S_{d_1} \times \dotsm \times S_{d_m}) \cong \A^{d_1 + \dotsb + d_m}$. We have the same filtration of $\A^{\sum d_i}$ by $R_{1,k}^{(d_1,\dotsc,d_m)}$: the space of monic polynomials $(f_1,\dotsc,f_m)$ of degree $d_1,\dotsc,d_m$ respectively for which there exists a monic $h \in K[z]$ with $\deg (h) \ge k$ and monic polynomials $g_i \in K[z]$ so that $f_i=g_ih$ for any $i$. The rest of the arguments follow analogously, keeping in mind that the group action is via $S_{d_1} \times \dotsb \times S_{d_m}$. \\

        \noindent \textbf{Step 2}: Here, we prove that $\{R_{1,k}^{(d_1,\dotsc,d_m)}-R_{1,k+1}^{(d_1,\dotsc,d_m)}\} = \{\mathrm{Poly}_1^{(d_1-k,\dotsc,d_m-k)}\times \A^k\}$. Just as in \cite{FW}, the base case of $k=0$ follows from the definition (in fact, loc.cit. shows that the two schemes are indeed isomorphic). For $k \ge 1$, \cite[Proposition 3.1]{PS} proves that the map
        \[\Psi: \mathrm{Poly}_1^{(d_1-k,\dotsc,d_m-k)} \times \Ab^k \rightarrow R_{1,k}^{(d_1,\dotsc,d_m)}\setminus R_{1,k+1}^{(d_1,\dotsc,d_m)}\]
        induces a piecewise isomorphism (where each piece is a locally closed subset, see \cite[Proposition 3.1]{PS} for more details); this immediately implies the claim by the definition of the Grothendieck Ring.\\
        
        \noindent \textbf{Step 3}: By combining Step 1 and 2 as in \cite{FW}, we obtain
        \[ \left\{\mathrm{Poly}_1^{(d_1,\dotsc,d_m)}\right\}=\Lb^{d_1+\dotsb+d_m}-\sum_{k \ge 1}\left\{\mathrm{Poly}_1^{(d_1-k,\dotsc,d_m-k)}\right\}\Lb^k \]

        \par For the induction on the class $\left\{\mathrm{Poly}_1^{(d_1,\dotsc,d_m)}\right\}$, we use lexicographic induction on the pair $(d_1,\dotsc,d_m)$. For the base case, consider when $d_1=0$. Here the monic polynomial of degree 0 is nowhere vanishing, so that any tuple of polynomials of degree $d_i$ for $i>1$ constitutes a member of $\mathrm{Poly}_1^{(0,d_2,\dotsc,d_m)}$, so that $\mathrm{Poly}_1^{(0,d_2,\dotsc,d_m)} \cong \mathbb{A}^{d_2+\dotsb+d_m}$. 
        
        \smallskip
        
        \par Now assume that $d_1>0$. Then, we obtain
        
        \begingroup
        \allowdisplaybreaks
        \begin{align*}
            &\left\{\mathrm{Poly}_1^{(d_1,\dotsc,d_m)}\right\}\\
            &=\Lb^{d_1+\dotsb+d_m}-\sum_{k \ge 1} \left\{\mathrm{Poly}_1^{(d_1-k,\dotsc,d_m-k)}\right\}\Lb^k\\
            &=\Lb^{d_1+\dotsb+d_m}-\left(\sum_{k=1}^{d_1-1}(\Lb^{(d_1-k)+\dotsb+(d_m-k)}-\Lb^{(d_1-k)+\dotsb+(d_m-k)-m+1})\Lb^k+\Lb^{(d_2-d_1)+\dotsb+(d_m-d_1)}\Lb^{d_1}\right)\\
            &=\Lb^{d_1+\dotsb+d_m}-\left(\sum_{k=1}^{d_1-1}(\Lb^{d_1+\dotsb+d_m-(m-1)k}-\Lb^{d_1+\dotsb+d_m-(m-1)(k+1)})+\Lb^{d_1+\dotsb+d_m-(m-1)d_1}\right)\\
            &=\Lb^{d_1+\dotsb+d_m}-\Lb^{d_1+\dotsb+d_m-m+1}
        \end{align*}
        \endgroup
    \end{proof}

    \subsection{Motive of Hom stack}\label{subsec:motive_Hom}
    
    Now we are ready to find the class in Grothendieck ring of the Hom stack $\Hom_n(\Pb^1,\Pcv)$:
    
    \begin{prop}
        \label{prop:motivecount_Hom} 
        Fix the weight $\vec{\lambda} = (\lambda_0, \dotsc, \lambda_N)$ with $|\vec{\lambda}|:=\sum\limits_{i=0}^{N} \lambda_i$. Then the motive of the Hom stack $\Hom_n(\Pb^1,\Pcv)$ in the Grothendieck ring of $K$--stacks $K_0(\mathrm{Stck}_{K})$ is equivalent to

        \[
        \begin{array}{ll}
        \left\{\Hom_n(\Pb^1,\Pcv)\right\} &= \left(\sum\limits_{i=0}^{N} \Lb^{i}\right) \cdot  \left(\Lb^{|\vec{\lambda}|n}-\Lb^{|\vec{\lambda}|n - N}\right) \\
        &\\
        &=\Lb^{|\vec{\lambda}|n - N} \cdot \left(\Lb^{2N} + \dotsb + \Lb^{N+1} - \Lb^{N-1} - \dotsb - 1\right)

        %\\
        %&\\
        %&=\Lb^{|\vec{\lambda}|n + N} + \dotsb + \Lb^{|\vec{\lambda}|n + 1} - \Lb^{|\vec{\lambda}|n - 1} - \dotsb - \Lb^{|\vec{\lambda}|n - N}

        \end{array}
        \]
        where $\Lb^1:=\{\Ab^1_K\}$ is the Lefschetz motive.
    \end{prop}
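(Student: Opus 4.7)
The plan is to realize $\Hom_n(\Pb^1,\Pcv)$ as a quotient stack and then compute the motive of its smooth cover. By Remark~\ref{rmk:Hom_substack_wtproj}, $\Hom_n(\Pb^1,\Pcv) \cong [T/\Gb_m]$ where $T \subset V := \bigoplus_{i=0}^N H^0(\Pb^1,\Oc(\lambda_i n))$ is the open subscheme parameterizing tuples $(u_0,\dotsc,u_N)$ of sections with no common zero on $\Pb^1$. By Lemma~\ref{lem:Gm_quot}, it suffices to compute $\{T\}$ and divide by $\Lb - 1$.

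I would stratify $T = \bigsqcup_{\emptyset \neq S \subseteq \{0,\dotsc,N\}} T_S$ according to the subset $S$ of indices $i$ for which $u_i$ does not vanish at $\infty \in \Pb^1$. On each stratum, factoring out the $|S|$ leading coefficients $c_i \in \Gb_m$ of $u_i$ at $\infty$ for $i \in S$ yields $\{T_S\} = (\Lb-1)^{|S|}\{F_S\}$, where $F_S$ is the variety of tuples of monic polynomials $\tilde u_i$ of degree $\lambda_i n$ (for $i \in S$) together with polynomials $u_j$ of degree at most $\lambda_j n - 1$ (for $j \in S^c$), with no common root in $\Ab^1$. A further substratification of $F_S$ by the exact degrees $d_j := \deg u_j$ for $j \in S^c$ expresses each substratum as a product of $\Gb_m$-factors (from the leading coefficients of the $u_j$'s of positive degree) and a Poly-space motive $\{\mathrm{Poly}_1^{(\cdots)}\}$ given by Proposition~\ref{prop:poly_m}; the case $u_j \equiv 0$ is ruled out automatically by coprimeness with the nonempty family $(\tilde u_i)_{i \in S}$ of monic polynomials of positive degree.

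The key technical claim is that $\{F_S\} = \Lb^{|\vec\lambda|n} - \Lb^{|\vec\lambda|n-N}$ is independent of the nonempty choice of $S$. For $S = \{0,\dotsc,N\}$ this holds immediately by definition since $F_S = \mathrm{Poly}_1^{(\lambda_0 n,\dotsc,\lambda_N n)}$, and for other $S$ it would be deduced by a telescoping summation of the substratum contributions using Proposition~\ref{prop:poly_m} together with the geometric series identity $\sum_{k=0}^{m-1}\Lb^k = (\Lb^m - 1)/(\Lb - 1)$. Granting this uniformity, the binomial theorem yields
\[
\{T\} = (\Lb^{|\vec\lambda|n} - \Lb^{|\vec\lambda|n-N})\sum_{s=1}^{N+1}\binom{N+1}{s}(\Lb-1)^s = (\Lb^{|\vec\lambda|n} - \Lb^{|\vec\lambda|n-N})(\Lb^{N+1} - 1),
\]
and dividing by $\Lb - 1$ produces the stated formula.

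The main obstacle will be verifying the uniformity of $\{F_S\}$; the calculation must carefully treat the dichotomy in Proposition~\ref{prop:poly_m} between the case of all positive Poly-space degrees (contributing $\Lb^{D} - \Lb^{D-N}$) and the case in which at least one degree is zero (contributing $\Lb^{D}$), which arises whenever some $u_j$ for $j \in S^c$ specializes to a nonzero constant. An induction on $|S^c|$ reducing the general case to $S = \{0,\dotsc,N\}$ should control the bookkeeping and complete the proof.
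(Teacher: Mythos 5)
Your overall strategy is the same as the paper's: realize the Hom stack as $[T/\Gb_m]$, stratify $T$ by behavior at $\infty \in \Pb^1$ (your $S$ is the complement of the paper's index set $J$), factor out leading coefficients to isolate a fiber variety $F_S$, prove $\{F_S\}$ is independent of $S$ (exactly the content of the paper's Proposition~\ref{prop:fiber_phi}), and finish with the identity $\sum_{s\ge 1}\binom{N+1}{s}(\Lb-1)^s=\Lb^{N+1}-1$. So the architecture is correct and matches the paper.

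However, your sketch of the key uniformity claim contains a concrete error: you assert that the case $u_j\equiv 0$ for $j\in S^c$ is ``ruled out automatically by coprimeness with the nonempty family $(\tilde u_i)_{i\in S}$.'' This is false. The monic polynomials $\tilde u_i$ ($i\in S$) of positive degree need not share a common root, so for $|S|\ge 2$ one can take $u_j=0$ for \emph{every} $j\in S^c$; more generally $u_j=0$ for some $j$ is admissible whenever the remaining entries already have no common root. This locus is not a harmless edge case: in the paper's inductive computation of $\{F_{\langle m+1\rangle}(\vec d)\}$, the term coming from $f_m=0$ (namely $\{F_{\langle m\rangle}(d_0,\dotsc,\widehat{d_m},\dotsc,d_N)\}$) is essential for the telescoping to close, and omitting it gives $-\Lb^{D-d_m}+\Lb^{D}-\Lb^{D-N}+\Lb^{D-d_m-N+1}$ instead of the correct $\Lb^{D}-\Lb^{D-N}$. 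So your substratification must include a separate stratum for $u_j=0$ (alongside $\deg u_j=0$ and $\deg u_j=\ell$ for $1\le\ell<\lambda_j n$). A second, smaller slip: the $\Gb_m$-factors arise from the leading coefficients of \emph{every nonzero} $u_j$ with $j\in S^c$, including those of degree $0$ (a nonzero constant), not only those of positive degree; otherwise you undercount by factors of $\Lb-1$.
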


    %\textbf{Proof of Theorem~\ref{thm:hom-inertia}}\label{subsec:motive_Hom} 
    \begin{proof}
    Let $\vec{\lambda} = (\lambda_0, \dotsc, \lambda_N)$ and $\lambda_i \in \N$ for every $i$ with $|\vec{\lambda}|:=\sum\limits_{i=0}^{N} \lambda_i$. Then the Hom stack $\Hom_{n}(\Pb^{1}, \Pcv) \cong [T/\Gb_m]$ is the quotient stack by the proof of Proposition~\ref{prop:DMstack}. By Lemma~\ref{lem:Gm_quot}, we have $\{\Hom_{n}(\Pb^{1}, \Pcv)\}=(\Lb-1)^{-1}\{T\}$. Henceforth, it suffices to find the motive $\{T\}$, and not worry about the original $\Gb_m$-action on $T$. To do so, we need to reinterpret $T$ as follows.
    
    \par Fix a chart $\A^1 \hookrightarrow \Pb^1$ with $x \mapsto [1:x]$, and call $0=[1:0]$ and $\infty=[0:1]$. It comes from a homogeneous chart of $\Pb^1$ by $[Y:X]$ with $x:=X/Y$ away from $\infty$. Then for any $u \in H^0(\Oc_{\Pb^1}(d))$ with $d \ge 0$, $u$ is a homogeneous polynomial of degree $d$ in $X$ and $Y$. By substituting in $Y=1$, we obtain a representation of $u$ as a polynomial in $x$ with degree at most $d$. For instance, $\deg u < d$ as a polynomial in $x$ if and only if $u(X,Y)$ is divisible by $Y$ (i.e., $u$ vanishes at $\infty$). From now on, $\deg u$ means the degree of $u$ as a polynomial in $x$. Conventionally, set $\deg 0:=-\infty$.
    
    \par Therefore, $T$ parameterizes a $(N+1)$-tuple $(f_0,\dotsc,f_N)$ of polynomials in $K[x]$ with no common roots in $\overline{K}$, where $\deg f_i \le n\lambda_i$ for each $i$ with equality for some $i$. We use this interpretation to construct $\Phi: T \rightarrow \Ab^{N+1}\setminus 0$,
    \begin{comment}
    \begin{equation*}
    \xymatrix{
    { T } \ar@{->}[r]^-{\Phi} \ar[d]^{/\Gb_m} & { \Ab^{N+1} \setminus 0 }   \\
    {\Hom_{n}(\Pb^{1}, \Pcv)}
    }
    \end{equation*}
    \end{comment}
    %where 
    $\Phi(f_0, \dotsc, f_N)= (a_0,\dotsc,a_N)$, where $a_i$ is the coefficient of degree $n\lambda_i$ term of $f_i$. 
    
    \par Now, we stratify $T$ by taking preimages under $\Phi$ of a stratification of $\Ab^{N+1}\setminus 0$ by $\sqcup E_J$, where $J$ is any proper subset of $\{0,\dotsc,N\}$ and
    \[ E_{J} = \{ (a_0, \dotsc , a_N)\; | \; a_j = 0 ~~\; \forall j \in J \} \iso \Gb_m^{N+1-|J|} \]
    Observe that $E_{J}$ has the natural free $\Gb_m^{N+1-|J|}$-action, which lifts to $\Phi^{-1}(E_{J})$ via multiplication on $\Gb_m$-scalars on $f_i$ for $i \notin J$. The action is free on $\Phi^{-1}(E_{J})$ as well, so that $\Phi|_{\Phi^{-1}(E_J)}$ is a Zariski-locally trivial fibration with base $E_J$. Each fiber is isomorphic to $F_{J}(n\vec{\lambda})$ defined below:
    \begin{defn}\label{def:Poly_modif}
    Fix $m \in \Nb$ and $\vec{d}:=(d_0,\dotsc,d_N) \in \Zb_{\ge 0}^{N+1}$. Given $J \subsetneq \{0,\dotsc,N\}$, $F_J(\vec{d})$ is defined as a variety consisting of tuples $(f_0,\dotsc,f_N)$ of $K$-polynomials without common roots such that
    \begin{itemize}
        \item for any $j \notin J$, then $f_j$ is monic of degree $n\lambda_j$, and
        \item for any $j \in J$, then $\deg f_j<n\lambda_j$ ($f_j$ is not necessarily monic).
    \end{itemize}
    If instead $J=\{0,\dotsc,N\}$, then we define $F_{J}(\vec{d}):=\emptyset$
    \end{defn}
    
    \par This implies that $\{\Phi^{-1}(E_J)\}=\{E_J\}\{F_J(n\vec{\lambda})\}=(\Lb-1)^{N+1-|J|}\{F_J(n\vec{\lambda})\}$. Since
    \begin{equation}\label{eq:sum_stratif}
        \{T\}=\sum_{J \subsetneq \{0,\dotsc,N\}} \{\Phi^{-1}(E_J)\}=\sum_{J \subsetneq \{0,\dotsc,N\}}\{E_J\}\{F_J(n\vec{\lambda})\} \; ,
    \end{equation}
    it suffices to find $\{F_J(n\vec{\lambda})\}$ as a polynomial of $\Lb$.
    
    \begin{prop}\label{prop:fiber_phi}
    $ \{F_{J}(n\vec{\lambda})\} = \{\mathrm{Poly}_1^{(n \lambda_0, \cdots ,n \lambda_N)}\} =  \left(\Lb^{|\vec{\lambda}| \cdot n}-\Lb^{|\vec{\lambda}| \cdot n - N}\right)$, where $|\vec{\lambda}|:=\sum_i \lambda_i$ . In other words, $ \{F_{J}(n\vec{\lambda})\}$ only depends on $n\vec{\lambda}$ .
    \end{prop}
    \begin{proof}
        Set $d_i:=n \lambda_i > 0$ for the notational convention. Upto $S_{N+1}$-action on $\{0,\dotsc,N\}$ (forgetting that $\lambda_0 \le \cdots \le \lambda_N)$, consider instead $F_{\left<m\right>}(\vec{d})$ with $\left<m\right> = \{ 0, \cdots, m-1 \}$ and $\vec{d} = (d_0,\cdots, d_N)$ with $|\vec{d}|:=\sum\limits_{i=0}^{N} d_i$. We now want to show that
        \begin{equation}\label{eq:Poly_modif}
        \{F_{\left<m\right>}(\vec{d})\} = \{\mathrm{Poly}_1^{(d_0, \cdots ,d_N)}\} =  \left(\Lb^{|\vec{d}|}-\Lb^{|\vec{d}|- N}\right).
        \end{equation}
        
        \medskip
        
        \par In order to prove this, we first check that if we set $d_i = 0$ for some $i \ge m$, then 
        \[
            \{F_{\left<m\right>}(\vec{d})\} = \{\mathrm{Poly}_1^{(d_0, \cdots ,d_N)}\} =  \Lb^{|\vec{d}|}.
        \]
        To see this, note that $i \not \in \left<m\right>$, so that $f_i$ is monic of degree $d_i=0$ for any $(f_0,\dotsc,f_N) \in F_{\left<m\right>}(\vec{d})$; so $f_i=1$. Therefore, the common root condition from Definition~\ref{def:Poly_modif} is vacuous, so that $\{F_{\left<m\right>}(\vec{d})\}=\Lb^{|\vec{d}|}$ (as the space of monic polynomials of degree $d$ is isomorphic to $\Ab^d$ and so is the space of polynomials of degree $<d$).
        
        \medskip

        \par We prove equation~\eqref{eq:Poly_modif} by lexicographical induction on the ordered pairs $(N,m)$ such that $N>0$ and $0 \le m<N+1$. There are two base cases to consider:
        \begin{enumerate}
            \item If $m=0$, then $\left<0\right>=\emptyset$, so that $F_{\emptyset}(\vec{d}) \cong \mathrm{Poly}_1^{(d_0,\dotsc,d_N)}=:\mathrm{Poly}_1^{\vec{d}}$ by Definition~\ref{def:poly}.
            \item If $N=1$, then $m$ is $0$ or $1$. $m=0$ follows from above. Now assume $m=1$. Then $(f_0,f_1) \in F_{\left<1\right>}(\vec{d})$ if and only if $\deg f_0 <d_0$ and $\deg f_1=d_1>0$ with $f_1$ monic. Observe that $f_0$ cannot be 0, otherwise $f_1$ has no roots while having positive degree, which is a contradiction. Since $f_0$ can be written as $a_0g_0$ for $g_0$ monic  of degree $\deg f_0$ and $a_0 \in \Gb_m$, $F_{\left<1\right>}(\vec{d})$ decomposes into the following locally closed subsets:
            \[
            F_{\left<1\right>}(\vec{d})=\bigsqcup_{l=0}^{d_0-1}\Gb_m \times F_{\emptyset}(l,d_1)=\Gb_m \times \bigsqcup_{l=0}^{d_0-1} \mathrm{Poly}_1^{(l,d_1)}.
            \]
            Therefore,
            \begingroup
            \allowdisplaybreaks
            \begin{align*}
            \{F_{\left<1\right>}(\vec{d})\}&=\{\Gb_m\}\sum_{l=0}^{d_0-1}\left\{\mathrm{Poly}_1^{(l,d_1)}\right\}=(\Lb-1)\left(\Lb^{d_1}+\sum_{l=1}^{d_0-1}(\Lb^{l+d_1}-\Lb^{l+d_1-1})\right)\\
            &=(\Lb-1)(\Lb^{d_1}+\Lb^{d_0+d_1-1}-\Lb^{d_1})=(\Lb-1)\Lb^{d_0+d_1-1}\\
            &=\Lb^{d_0+d_1}-\Lb^{d_0+d_1-1}
            \end{align*}
            \endgroup
        \end{enumerate}
        
        \medskip
        
        In general, assume that the statement is true for any $(N',m')$ whenever $N'<N$ or $N'=N$ and $m' \le m$. If $m+1<N+1$, then we want to prove the assertion for $(N,m+1)$. We can take the similar decomposition as the base case $(1,1)$, except that we vary the degree of $f_m$, which is the ($m+1$)-st term of $(f_0,\dotsc,f_N) \in F_{\left<m+1\right>}(\vec{d})$, and $f_m$ can be $0$. If $f_m=0$, then $(f_0,\dotsc,\widehat{f_m},\dotsc,f_N)$ have no common roots, so that $(f_0,\dotsc,\widehat{f_m},\dotsc,f_N) \in F_{\left<m\right>}(d_0,\dotsc,\widehat{d_m},\dotsc,d_N)$ (and vice versa). Henceforth, as a set,
        \begingroup
        \allowdisplaybreaks
        \begin{align*}
        F_{\left<m+1\right>}(\vec{d}) &= F_{\left<m\right>}(d_0,\dotsc, \widehat{d_m} ,\dotsc, d_N)\: \bigsqcup \:(\Gb_m \times F_{\left<m\right>}(d_0,\dotsc, 0 ,\dotsc, d_N))\\
        &\phantom{=}\:\:\bigsqcup \left( \Gb_m \times  \bigsqcup_{\ell=1}^{d_m-1}F_{\left<m\right>}(d_0,\dotsc, \ell ,\dotsc, d_N) \right).
        \end{align*}
        \endgroup

        \par By induction,
        
        \begingroup
        \allowdisplaybreaks
        \begin{align*}
        \left\{F_{\left<m+1\right>}(\vec{d})\right\}&=\left\{F_{\left<m\right>}(d_0,\cdots, \widehat{d_m} ,\cdots, d_N)\right\}+(\Lb-1)\left\{F_{\left<m\right>}(d_0,\cdots, 0 ,\cdots, d_N)\right\}\\
        &\phantom{=}\text{ }+(\Lb-1)\sum_{\ell=0}^{d_m-1}\left\{F_{\left<m\right>}(d_0,\cdots, \ell ,\cdots, d_N)\right\}\\
        &= \Lb^{|\vec{d}|-d_m} - \Lb^{\vec{d}-d_m-N+1}+(\Lb-1) \cdot \Lb^{|\vec{d}|-d_m}\\
        &\phantom{=}\text{ }+(\Lb-1)\sum_{\ell=1}^{d_m-1}\left( \Lb^{|\vec{d}|-d_m+\ell} - \Lb^{|\vec{d}|-d_m+\ell-N} \right)\\
        &= \Lb^{|\vec{d}|-d_m} - \Lb^{|\vec{d}|-d_m-N+1}+\Lb^{|\vec{d}|-d_m+1} - \Lb^{|\vec{d}|-d_m} \\
        &\phantom{=}\text{ }+(\Lb-1)\Lb(\Lb^{|\vec{d}|-d_m}-\Lb^{|\vec{d}|-d_m-N})(1+\Lb+\dotsb+\Lb^{d_m-2})\\
        &=\Lb^{|\vec{d}|-d_m+1}-\Lb^{|\vec{d}|-d_m-N+1}+\Lb(\Lb^{|\vec{d}|-d_m}-\Lb^{|\vec{d}|-d_m-N})(\Lb^{d_m-1}-1) \\
        &=\Lb^{|\vec{d}|-d_m+1}-\Lb^{|\vec{d}|-d_m-N+1}+\Lb^{|\vec{d}|}-\Lb^{|\vec{d}|-d_m+1}-\Lb^{|\vec{d}|-N}+\Lb^{|\vec{d}|-d_m-N+1}\\
        &=\Lb^{|\vec{d}|}-\Lb^{|\vec{d}|-N}
        \end{align*}
        \endgroup
        
    \end{proof}
    
    \par Combining \eqref{eq:sum_stratif} and Proposition~\ref{prop:fiber_phi} with $\sum\limits_{J \subsetneq \{0,\dotsc,N\}}E_J=(\Ab^{N+1}\setminus 0)$, we finally acquire
    \begingroup
    \allowdisplaybreaks
    \begin{align*}
    \{\Hom_{n}(\Pb^{1}, \Pcv)\}&=\{\Gb_m\}^{-1}\{T\}=(\Lb-1)^{-1}\sum_{J \subsetneq \{0,\dotsc,N\}}\{E_J\}\{\mathrm{Poly}_1^{(n\vec{\lambda})}\}\\
    &=(\Lb-1)^{-1}(\Lb^{N+1}-1)\{\mathrm{Poly}_1^{(n\vec{\lambda})}\}=\left(\sum\limits_{i=0}^{N} \Lb^{i}\right) \cdot  \left(\Lb^{|\vec{\lambda}| \cdot n}-\Lb^{|\vec{\lambda}| \cdot n - N}\right)
    \end{align*}
    \endgroup
    
    \par This finishes the proof of Proposition~\ref{prop:motivecount_Hom}.
    %Theorem~\ref{thm:hom-inertia}. 

    \end{proof}

    \subsection{Point count of Hom stack}\label{subsec:ptcount_Hom}

    Using Proposition~\ref{prop:motivecount_Hom}, we immediately obtain the weighted point count $ \#_q\left(\Hom_n(\Pb^1,\Pcv)\right)$:

    \begin{cor}\label{cor:maincount}
        Fix the weight $\vec{\lambda} = (\lambda_0, \dotsc, \lambda_N)$ with $|\vec{\lambda}|:=\sum\limits_{i=0}^{N} \lambda_i$. Then the weighted point count of the Hom stack $\Hom_n(\Pb^1,\Pcv)$ over $\Fb_q$ is
        \[
        \begin{array}{ll}
        \#_q\left(\Hom_n(\Pb^1,\Pcv)\right) &= \left(\sum\limits_{i=0}^{N} q^{i}\right) \cdot  \left(q^{|\vec{\lambda}|n}-q^{|\vec{\lambda}|n - N}\right) \\
        &\\
        &=q^{|\vec{\lambda}|n - N} \cdot \left(q^{2N} + \dotsb + q^{N+1} - q^{N-1} - \dotsb - 1\right)
        \end{array}
        \]
        Denote $\delta:=\gcd (\lambda_0,\dotsc,\lambda_N)$ and $\omega:= \max \gcd(\lambda_i,\lambda_j)$ for $ 0 \le i,j \le N$. Then the number $|\Hom_n(\Pb^1,\Pcv)(\Fb_q)/\sim|$ of $\Fb_q$--isomorphism classes of $\Fb_q$--points (i.e., the non--weighted point count over $\Fb_q$) of $\Hom_n(\Pb^1,\Pcv)$ satisfies \[\delta \cdot \#_q\left(\Hom_n(\Pb^1,\Pcv)\right) \le \left|\Hom_n(\Pb^1,\Pcv)(\Fb_q)/\sim\right| \le \omega \cdot \#_q\left(\Hom_n(\Pb^1,\Pcv)\right)\;.\]
    \end{cor}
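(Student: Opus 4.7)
The weighted-count identity is immediate: the assignment $\Lb \mapsto q$ extends to a ring homomorphism $\#_q: K_0(\mathrm{Stck}_{\Fb_q}) \to \Q$ (as remarked right after Definition~\ref{defn:GrothringStck}), so applying $\#_q$ to the motivic formula of Proposition~\ref{prop:motivecount_Hom} yields the first displayed equality on the nose, and the second displayed form is a trivial algebraic rearrangement of $\left(\sum_{i=0}^N q^i\right)(q^{|\vec\lambda|n} - q^{|\vec\lambda|n - N})$.

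For the two-sided bound, my plan is to use the quotient presentation $\Hom_n(\Pb^1,\Pcv) \cong [T/\Gb_m]$ from Proposition~\ref{prop:DMstack} together with Definition~\ref{def:wtcount}. An $\Fb_q$-isomorphism class $x$ is represented by a tuple $(u_0,\dotsc,u_N) \in T(\Fb_q)$ of sections without common zero on $\Pb^1_{\Fb_q}$, and the scheme-theoretic stabilizer of such a tuple in $\Gb_m$ is the intersection $\bigcap_{i :\, u_i \ne 0} \mu_{\lambda_i} \cong \mu_{d(x)}$, where $d(x) := \gcd_{i :\, u_i \ne 0} \lambda_i$. Since $\delta \mid \lambda_i$ for every $i$, we have $\delta \mid d(x)$; conversely, because $n \ge 1$ forces every nonzero section of $\Oc_{\Pb^1}(\lambda_i n)$ to have zeros on $\Pb^1$, the no-common-zero requirement forces at least two coordinates of $(u_0,\dotsc,u_N)$ to be nonzero, giving $d(x) \le \max_{i \ne j} \gcd(\lambda_i,\lambda_j) = \omega$.

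Plugging the uniform bound $\delta \le |\Aut(x)| \le \omega$ into the defining sum $\#_q(\Xc) = \sum_x |\Aut(x)|^{-1}$ and comparing with $|\Xc(\Fb_q)/{\sim}| = \sum_x 1$ then yields the claimed chain of inequalities. The one point I expect to handle with some care, and the main (mild) obstacle, is the distinction between the order of $\mu_{d(x)}$ as a finite group scheme and the cardinality $|\mu_{d(x)}(\Fb_q)| = \gcd(d(x), q-1)$ of its actual $\Fb_q$-rational automorphisms: the upper bound $|\Aut(x)| \le \omega$ is robust, but the lower bound $|\Aut(x)| \ge \delta$ is the sharp statement only when $\delta \mid q-1$, and otherwise should be read with $\delta$ (resp.\ $\omega$) replaced by $\gcd(\delta,q-1)$ (resp.\ $\gcd(\omega,q-1)$). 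I would make this convention explicit at the start of the proof so that the rest of the argument is a short, purely combinatorial manipulation.
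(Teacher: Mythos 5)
Your proof takes the same route as the paper: apply the ring homomorphism $\#_q$ to the motive in Proposition~\ref{prop:motivecount_Hom} for the first identity, then use the quotient presentation $[T/\Gb_m]$ from Proposition~\ref{prop:DMstack} to compute the scheme-theoretic stabilizer as $\mu_{d(\varphi)}$ with $d(\varphi)=\gcd_{i : f_i \neq 0}\lambda_i$, and bound $d(\varphi)$ between $\delta$ and $\omega$ via the observation that at least two coordinate sections must be nonzero.

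The subtlety you flag about group schemes versus groups of rational points is genuine, and it is one the paper's own proof glosses over: the paper asserts that $\Aut(\varphi)$ ``is a finite cyclic group of order $\gcd(\lambda_i : i \in I)$,'' but by Definition~\ref{def:wtcount} and the stated conventions, $\Aut(\varphi)=\underline{\Aut}_\varphi(\Fb_q)=\mu_{d(\varphi)}(\Fb_q)$, whose order is $\gcd(d(\varphi),q-1)$ and can be strictly smaller. Consequently the lower bound as written really requires $\delta \mid q-1$; your proposed weakening $\delta \mapsto \gcd(\delta,q-1)$ is the correct unconditional statement, since $\delta \mid d(\varphi)$ forces $\gcd(\delta,q-1)\mid\gcd(d(\varphi),q-1)$. (In the paper's downstream uses $\delta \in \{1,2\}$ and the characteristic hypotheses force $q$ odd, so the error never surfaces, but your version is the one that should actually be recorded.)

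The one place to be careful is your parenthetical suggestion to also replace $\omega$ by $\gcd(\omega,q-1)$ in the upper bound: that replacement would make it false. The issue is that $d(\varphi)\le\omega$ does not imply $d(\varphi)\mid\omega$, so $\gcd(d(\varphi),q-1)$ can exceed $\gcd(\omega,q-1)$. For example, $\vec\lambda=(3,4,6,8)$ has $\omega=\gcd(4,8)=4$, yet a tuple with exactly $f_0,f_2$ nonzero has $d(\varphi)=\gcd(3,6)=3$, and over $\Fb_4$ one gets $|\Aut(\varphi)|=\gcd(3,3)=3 > \gcd(4,3)=1$. As you yourself note, $\gcd(d(\varphi),q-1)\le d(\varphi)\le \omega$ already makes $\omega$ a valid upper bound unconditionally; it should simply be left as is.
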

    
    \begin{proof}
        The first part of the Corollary follows as $\#_q:K_0(\mathrm{Stck}_{\Fb_q}) \rightarrow \Q$ is a ring homomorphism with $\#_q(\Lb)=q$ as $\Lb=\{\Ab_{\Fb_q}^1\}$ . For the second part, notice that for each $\varphi \in \Hom_n(\Pb^1,\Pcv)(\Fb_q)/\sim$ , it contributes 1 towards $|\Hom_n(\Pb^1,\Pcv)(\Fb_q)/\sim|$ instead of $\frac{1}{|\mathrm{Aut}(\varphi)|}$ for $\#_q(\Hom_n(\Pb^1,\Pcv))$. 
        Thus, we need to check that for any $\varphi \in \Hom_n(\Pb^1,\Pcv)(\Fb_q)$ with $\delta:=\gcd (\lambda_0,\dotsc,\lambda_N)$ and $\omega:= \max \gcd(\lambda_i,\lambda_j)$ for $ 0 \le i,j \le N$, the automorphism group satisfies the following :
        \[
        \delta \le |\mathrm{Aut}(\varphi)| \le \omega \; .
        \]
        By Proposition~\ref{prop:DMstack}, we can represent $\varphi$ as a tuple $(f_0,\dotsc,f_N)$ of sections $f_i \in H^0(\Oc_{\Pb^1_{\Fb_q}}(n\lambda_i))$, with equivalence relation given by a $\Gb_m$--action. Since the automorphism group of $\varphi$ is identified with the subgroup of $\Gb_m$ fixing $(f_0,\dotsc,f_N)$, $\mathrm{Aut}(\varphi)$ consists of $u \in \Gb_m(\Fb_q)$ such that $u^{\lambda_i}f_i=f_i$ for any $i$ . Since $f_i$'s have no common root and the degree of the morphism $\varphi$ is $n \in \N$, at least two of those are nonzero; call $I$ to be the set of $i$'s with $f_i \neq 0$ . Then, $u^{\lambda_i}=1$ for any $i \in I$, so that $u$ is a $\mathrm{gcd}(\lambda_i \; : \; i \in I)^{\mathrm{th}}$ root of unity. This shows that $\mathrm{Aut}(\varphi)$ is a finite cyclic group of order $\mathrm{gcd}(\lambda_i \; : \; i \in I)$, proving the second part of the Corollary.
    \end{proof}
    
    Above proof shows that computing automorphism groups of $\Fb_q$--points of $\Hom_n(\Pb^1,\Pcv)$ is the key ingredient for comparing between weighted and non--weighted point counts over $\Fb_q$. Instead, we bypass this issue by using properties of the inertia stack $\Ic\left(\Hom_n(\Pb^1,\Pcv)\right)$, particularly by using Theorem~\ref{thm:ptcounts} and Proposition~\ref{prop:Hom_inertia_decomp}.

    \medskip

    \subsection{Point count of Inertia of Hom stack}\label{subsec:motive_inertia}

    \par We compute the class $\left\{ \Ic\left(\Hom_n(\Pb_K^1,\Pc_K(\vec\lambda))\right) \right\}$, which renders the non--weighted point count of the moduli stack $\Lc_{g, |\Delta_{g}| \cdot n}$ over $\Fb_q$.

    \begin{prop}\label{prop:motive_inertia}
        Take the same notation as in Proposition~\ref{prop:wtproj_inertia_decomp}. Then,
        \begin{align*}
            \left\{ \Ic\left(\Hom_n(\Pb_K^1,\Pc_K(\vec\lambda))\right) \right\} = \sum_{g \in |(\Gb_m)_K|} \left\{ \Hom_n(\Pb_{\kappa(g)}^1,\Pc_{\kappa(g)}(\vec\lambda_{I_g})) \right\}
        \end{align*}
    \end{prop}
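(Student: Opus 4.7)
The plan is to derive this class equality as a formal consequence of the geometric decomposition established in Proposition~\ref{prop:Hom_inertia_decomp}, combined with the additivity relation defining $K_0(\mathrm{Stck}_K)$. First, I would recall the decomposition
\[
\Ic(\Hom_n(\Pb_K^1,\Pc_K(\vec\lambda))) \cong \bigsqcup_{g \in |(\Gb_m)_K|} \Hom_n(\Pb_{\kappa(g)}^1,\Pc_{\kappa(g)}(\vec\lambda_{I_g}))
\]
into connected components indexed by the closed points of $(\Gb_m)_K$, where each summand is the Hom stack with reduced weight tuple $\vec\lambda_{I_g}$ defined over the residue field $\kappa(g)$ (viewed as a $K$-stack via the inclusion $\Spec\,\kappa(g) \to \Spec\,K$).

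Second, I would verify that the apparently infinite sum on the right-hand side is actually finite inside $K_0(\mathrm{Stck}_K)$. Each summand $\Hom_n(\Pb_{\kappa(g)}^1,\Pc_{\kappa(g)}(\vec\lambda_{I_g}))$ is empty (hence has class zero) whenever $|I_g| \le 1$, since a morphism $\Pb^1 \to \Pc(\vec\lambda_{I_g})$ with pullback of $\Oc(1)$ of degree $n$ cannot exist. Now $I_g$ is nonempty only when $\mathrm{ord}(g) \mid \lambda_i$ for some $i$, and in particular $\mathrm{ord}(g) \le \max_i \lambda_i$. Since there are only finitely many closed points of $(\Gb_m)_K$ with bounded order (they correspond to roots of unity in $\overline{K}$ modulo Galois orbits), the sum over $g \in |(\Gb_m)_K|$ with nontrivial contribution is finite.

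Third, I would apply the relation $\{\Xc\} = \{\Zc\} + \{\Xc \setminus \Zc\}$ from Definition~\ref{defn:GrothringStck} iteratively to the finite disjoint union decomposition. Each summand of the decomposition is of finite type with affine diagonal (by Remark~\ref{rmk:Hom_substack_wtproj} applied over $\kappa(g)$), so its class in $K_0(\mathrm{Stck}_K)$ is well-defined. Writing the inertia stack as a finite chain of complements of closed open components and applying the additive relation yields
\[
\left\{ \Ic\left(\Hom_n(\Pb_K^1,\Pc_K(\vec\lambda))\right) \right\} = \sum_{g \in |(\Gb_m)_K|} \left\{ \Hom_n(\Pb_{\kappa(g)}^1,\Pc_{\kappa(g)}(\vec\lambda_{I_g})) \right\} .
\]

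The only mild subtlety is the bookkeeping: one must interpret each summand as a $K$-stack (via restriction of scalars along $\kappa(g)/K$) so that the equation lives in $K_0(\mathrm{Stck}_K)$ rather than in a disjoint union of Grothendieck rings over varying base fields. Once this convention is fixed, the proof reduces essentially to citing Proposition~\ref{prop:Hom_inertia_decomp} and unpacking the additivity axiom, with no further computation required.
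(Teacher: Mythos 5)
Your proof is correct and takes essentially the same approach as the paper, which simply cites Proposition~\ref{prop:Hom_inertia_decomp} together with the additivity relation of the Grothendieck ring; you have spelled out the two implicit points (finiteness of the sum, and the bookkeeping of viewing each $\kappa(g)$-stack as a $K$-stack of finite type so the class lives in $K_0(\mathrm{Stck}_K)$) in a careful but equivalent way. One small remark: the displayed statement in the paper writes $\vec\lambda_{\kappa(g)}$, which should be read as $\vec\lambda_{I_g}$ as in Proposition~\ref{prop:wtproj_inertia_decomp}; your proposal correctly uses $\vec\lambda_{I_g}$.
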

    \begin{proof}
        This directly follows from Proposition~\ref{prop:Hom_inertia_decomp} and the definition of the Grothendieck Ring.
    \end{proof}

    \par Above definition combined with the proof of Corollary~\ref{cor:maincount} gives an algorithm for computing $|\Hom_n(\Pb^1,\Pcv)(\Fb_q)/\sim|$:
    
    \begin{prop}\label{prop:algo_ptcount}
        Take the same notation as in Proposition~\ref{prop:Hom_inertia_decomp} for $K=\Fb_q$, and let $R$ be the set of positive integers $r$ (including $r=1$) that divides $q-1$. Then,
        \[
            |\Hom_n(\Pb^1,\Pcv)(\Fb_q)/\sim|=\sum_{r \in R} \varphi(r)\cdot \#_q(\Hom_n(\Pb^1,\Pc(\vec\lambda_r)))
        \]
        where $\varphi$ is the Euler $\varphi$-function and $\vec\lambda_r:=\vec\lambda_{I_{\zeta_r}}$ for some primitive $r^{\mathrm{th}}$ root of unity $\zeta_r$ in $\Gb_m$.
    \end{prop}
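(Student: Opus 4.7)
The plan is to chain together Theorem~\ref{thm:ptcounts}, Proposition~\ref{prop:Hom_inertia_decomp}, and the basic arithmetic of $\Fb_q^*$, with the central observation being that $\#_q$ is \emph{blind} to summands whose residue field is strictly larger than $\Fb_q$.

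First, I would apply Theorem~\ref{thm:ptcounts} to the algebraic stack $\Hom_n(\Pb^1_{\Fb_q},\Pcv)$, which is of finite type with finite type affine diagonal by Remark~\ref{rmk:Hom_substack_wtproj} and Lemma~\ref{lem:wtproj_prop}, obtaining
\[
    |\Hom_n(\Pb^1,\Pcv)(\Fb_q)/\sim|=\#_q\bigl(\Ic(\Hom_n(\Pb^1,\Pcv))\bigr).
\]
Then I would invoke the explicit decomposition of Proposition~\ref{prop:Hom_inertia_decomp}, and use that $\#_q$ is a ring homomorphism (so additive on disjoint unions) to rewrite
\[
    \#_q\bigl(\Ic(\Hom_n(\Pb^1,\Pcv))\bigr)=\sum_{g \in |(\Gb_m)_{\Fb_q}|} \#_q\bigl(\Hom_n(\Pb^1_{\kappa(g)},\Pc_{\kappa(g)}(\vec\lambda_{I_g}))\bigr),
\]
where the $g$-th summand is viewed as an $\Fb_q$-stack via the composition of structure maps $\Hom_n(\Pb^1_{\kappa(g)},\Pc_{\kappa(g)}(\vec\lambda_{I_g})) \to \Spec \kappa(g) \to \Spec \Fb_q$.

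The key reduction is the observation that if $[\kappa(g):\Fb_q]>1$, then a $\kappa(g)$-stack has no $\Fb_q$-points when regarded as an $\Fb_q$-stack, since any $\Fb_q$-morphism $\Spec \Fb_q \to \Spec \kappa(g)$ would require an $\Fb_q$-algebra embedding $\kappa(g) \hookrightarrow \Fb_q$, which does not exist. Consequently $\#_q$ of such a summand is zero, and only closed points $g \in |(\Gb_m)_{\Fb_q}|$ with $\kappa(g)=\Fb_q$ contribute; these correspond bijectively to $\Fb_q$-rational points of $\Gb_m$, i.e.\ to elements of $\Fb_q^*$.

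Finally, I would group these contributions by the multiplicative order of $g \in \Fb_q^*$. Since $\Fb_q^*$ is cyclic of order $q-1$, the orders of its elements are precisely the divisors $r$ of $q-1$, and there are exactly $\varphi(r)$ elements of order $r$. By Proposition~\ref{prop:wtproj_inertia_decomp} the subset $I_g$ depends only on the order of $g$, so $\vec\lambda_{I_g}=\vec\lambda_r$ for every $g \in \Fb_q^*$ of order $r$; each such $g$ therefore contributes the same term $\#_q(\Hom_n(\Pb^1,\Pc(\vec\lambda_r)))$. Summing yields the claimed identity. The only conceptually delicate step is the base-change/residue-field bookkeeping in the previous paragraph, which I would verify by unwinding the definition of the $\Fb_q$-points functor for a stack over $\kappa(g)$; the rest is a direct combinatorial grouping.
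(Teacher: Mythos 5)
Your proposal is correct and follows essentially the same route as the paper: apply Theorem~\ref{thm:ptcounts}, decompose the inertia stack via Proposition~\ref{prop:Hom_inertia_decomp}, observe that only closed points $g$ with $\kappa(g)=\Fb_q$ (equivalently, elements of $\Fb_q^*$, equivalently roots of unity of order $r \mid q-1$) contribute to $\#_q$, and group by order using the Euler $\varphi$-function. The paper's proof is terser, compressing the residue-field argument into the single sentence that $\zeta_r \in \Gb_m(\Fb_q)$ iff $r \mid q-1$, whereas you spell out why a $\kappa(g)$-stack with $[\kappa(g):\Fb_q]>1$ has no $\Fb_q$-points; this extra unwinding is accurate and fills in the step the paper leaves implicit.
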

    \begin{proof}
        Recall that the multiplicative group $\Fb_q^*$ of a finite field $\Fb_q$ is a cyclic group of order $q-1$. By the primitive root condition, we see that $\zeta_r \in \Gb_m(\Fb_q)$ iff $r | (q-1)$. This implies that the substack $\Hom_n(\Pb^1_{\kappa(\zeta_r)},\Pc_{\kappa(\zeta_r)}(\vec\lambda_r))$ contributes $\Fb_q$--rational points iff $r$ divides $q-1$, hence the definition of the set $R$. As $\vec\lambda_r$ is independent of the choice of a primitive $r^{\mathrm{th}}$ root of unity and there are $\varphi(r)$ number of them, simplifying $\#_q\left\{ \Ic\left(\Hom_n(\Pb_K^1,\Pc_K(\vec\lambda))\right) \right\}$ gives the desired formula by Theorem~\ref{thm:ptcounts}.
    \end{proof}

    \begin{rmk}\label{rmk:modular_arithmetic}
    Note that writing a closed-form formula for $|\Hom_n(\Pb^1,\Pcv)(\Fb_q)/\sim|$ is difficult in general, as Euler $\varphi$-function is used, the sum is over all possible positive factors of $q-1$, and the length of $\vec\lambda_r$ can vary. Nevertheless, it is possible to obtain a closed-form formula by hand for special cases with mild assumptions on $q$ (Theorem~\ref{thm:low_genus_count} is a good example).
    \end{rmk}

    \section{Moduli stack $\Lc_{g, |\Delta_{g}| \cdot n}$ of quasi--admissible odd--degree hyperelliptic genus $g$ fibrations over $\Pb^{1}$}\label{sec:qadm_fib}
    
    %\par The study of \textit{fibration}, a flat proper surjective morphism $f : X \to C$ of an algebraic surface $X$ over a smooth complete curve $C$, lies at the heart of the birational classifications of surfaces over any algebraically closed fields. In particular, any given general type surface $X$ (i.e., the Kodaira dimension $\kappa(X) = 2$) is a projective surface $X \subset \Pb^N$ which admits a genus $g \ge 2$ pencil that, when blown up at the base locus, becomes a semistable genus $g \ge 2$ fibration over $\Pb^1$.

    \par In this section, we first define a rational fibration with a marked section, which allows us to define a hyperelliptic genus $g$ fibration with a marked Weierstrass section as a double cover fibration. Subsequently, we focus on a quasi--admissible hyperelliptic genus $g$ fibration over $\Pb^1$ with a marked Weierstrass section which extends a family of odd--degree hyperelliptic genus $g$ curves over $\Fb_q(t)$ with a marked Weierstrass point. For detailed references on hyperelliptic fibrations or fibered algebraic surfaces (over an algebraically closed field), we refer the reader to \cite{Liu, Liedtke}.

    \medskip

    \par Recall that a hyperelliptic curve $C$ is a separable morphism $\phi : C \to \Pb^1$ of degree 2. In order to extend the notion of hyperelliptic curve $C$ into family, we first generalize the notion of rational curve $\Pb^1$ into family.
    
    \begin{defn}\label{def:rational_fib}
        A \emph{rational fibration with a marked section} is given by a flat proper morphism $h:H \rightarrow \Pb^1$ of pure relative dimension 1 with a marked section $s':\Pb^1 \rightarrow H$ such that
        \begin{enumerate}
            \item any geometric fiber $h^{-1}(c)$ is a connected rational curve (so that arithmetic genus is 0),
            \item $s'(\Pb^1)$ is away from the non-reduced locus of any geometric fiber, and
            \item $s'(\Pb^1)$ is away from the singular locus of $H$.
        \end{enumerate}
        If the geometric generic fiber of $h$ is a smooth rational curve, then we call $(H,h,s')$ a \emph{$\Pb^1$-fibration}.
    \end{defn}
    
    \par We will occationally call $(H,h,s')$ a \emph{rational fibration} when there is no ambiguity on the marked section $s'$. Note that we allow a rational fibration $H$ to be reducible (when generic fiber is a nodal chain), and the total space of a $\Pb^1$-fibration can be singular. Certain double cover of the rational fibration gives us the hyperelliptic genus $g$ fibration with a marked Weierstrass section.

    \begin{defn}\label{def:hypell_fib}
        A \emph{hyperelliptic genus $g$ fibration with a marked Weierstrass section} consists of a tuple $(X,H,h,f,s,s')$ of a rational fibration $h: H \rightarrow \Pb^1$, a flat proper morphism $f : X \to H$ of degree 2 with $X$ connected and reduced, and sections $s: \Pb^1 \to X$ and $s':\Pb^1 \to H$ such that
        \begin{enumerate}
            \item Each geometric fiber $(h \circ f)^{-1}(c)$ is a connected 1--dimensional scheme of arithmetic genus $g$,
            \item $s(\Pb^1)$ is contained in the smooth locus of $h \circ f$ and is away from the non-reduced locus of any geometric fiber,
            \item $s'=f \circ s$ and $s(\Pb^1)$ is a connected component of the ramification locus of $f$ (i.e., $s'(\Pb^1)$ is a connected component of the branch locus of $f$), and
            \item if $p$ is a node of a geometric fiber $h^{-1}(c)$, then any $q \in f^{-1}(p)$ is a node of the fiber $(h\circ f)^{-1}(c)$,
            \item if the branch divisor of $f$ contains a node $e$ of a fiber $h^{-1}(t)$ with $t$ a closed geometric point of $\Pb^1$, then the branch divisor contains either an irreducible component of $h^{-1}(t)$ containing $e$ or an irreducible component of the singular locus of $H$ containing $e$.
        \end{enumerate}
        The \emph{underlying genus $g$ fibration} is a tuple $(\pi:=h \circ f, s)$ with $\pi:X \to \Pb^1$ a flat proper morphism with geometric fibers of arithmetic genus $g$ with a marked Weierstrass point given by $s$.
    \end{defn}
    
    \begin{note}\label{note:aut_fib}
        An isomorphism between hyperelliptic genus g fibrations $(X_1,H_1,h_1,f_1,s_1,s_1')$ and $(X_2,H_2,h_2,f_2,s_2,s_2')$ is given by a pair of isomorphisms $\alpha:X_1 \rightarrow X_2$ and $\beta:H_1 \rightarrow H_2$ such that
        \begin{enumerate}
            \item $h_2 \circ \beta = h_1$ and $f_2 \circ \alpha = \beta \circ f_1$ ($\Pb^1$--isomorphism criteria), and
            \item $\beta \circ s=s'$ (compatibility with sections).
        \end{enumerate}
    \end{note}
    
    \par From now on, we only consider non-isotrivial hyperelliptic fibrations, i.e., the underlying genus $g$ fibrations must be non-isotrivial. Thus, non-isotrivialness will be assumed on every statement and discussions below.
    
    \medskip

    \par Recall that a fibration with a section is said to be \textit{stable} if all of its fibers are stable pointed curves. This leads to the following definition in the hyperelliptic case:

    \begin{defn}\label{def:qstb_hypfib}
        A \emph{stable hyperelliptic genus $g$ fibration with a marked Weierstrass section} is a hyperelliptic genus $g$ fibration $(X,H,h,f,s,s')$ with $K_X+s(\Pb^1)$ is $\pi$-ample. We assume that $X$ is not isotrivial, i.e., the trivial hyperelliptic fiber bundle over $\Pb^{1}$ with no singular fibers.
        
        Moreover, if the geometric generic fiber is smooth, then $(X,H,h,f,s,s')$ is called a \emph{stable odd hyperelliptic genus $g$ model over $\Pb^1$}.
    \end{defn}
    
    \par Conditions in the above definition implies that $(X,s(\Pb^1))/\Pb^1$ is log canonical. In classical language, this means that there are no smooth rational curves of self-intersection $-1$ and $-2$ in a fiber without meeting $s(\Pb^1)$.

    \begin{exmp}\label{eg:sstab_hyp_sing_P1}
        Suppose that $(X,H,h,f,s,s')$ is a stable odd hyperelliptic genus $g$ model with a marked Weierstrass section. Then, it is possible that $f:X \rightarrow H$ in a \'etale local neighborhood of $p \in H$ is the map $\Ab^2_{x,y} \rightarrow \Ab^2_{x,y}/\mu_2$, where $\mu_2$ acts on $\Ab^2_{x,y}$ by $(x,y) \mapsto (-x,-y)$ . In this case, $\pi$ can be given by $\Ab^2_{x,y} \rightarrow \Ab^1_{z}$ by $z=xy$ . Note that $H$ admits an $A_1$--singularity at $p$, $f^{-1}(p)$ is a node of a fiber of $\pi$, but $X$ is nonsingular. In general, $X$ and $H$ admit at worst $A_l$--singularities for some $l$ (because geometric fibers of $X$ are nodal curves), where $A_u$--singularities of surfaces are \'etale locally given by $w^2+x^2+y^{u+1}=0$. This follows from the fact that 1--parameter deformation of nodes create such singularities. Note that on the neighborhood of such an isolated singular point of $H$, the branch locus of $f$ is concentrated at the point if it contains the point, which only appears possibly at singular points of the fibers of $h:H \rightarrow \Pb^1$.
    \end{exmp}
    
    \begin{exmp}\label{eg:branch_div}
        Suppose that $(X,H,h,f,s,s')$ is a stable odd hyperelliptic genus $g$ model with a marked Weierstrass section over a field $K$. The goal is to classify singularities of the branch divisor of $f$. By the definition, the branch divisor decomposes into $B \sqcup s'(\Pb^1_K)$, which is contained in the smooth locus of $H$ by the definition. First, consider a geometric point $c$ in the intersection $B \cap H_t$, where $t$ is a geometric point of $\Pb^1_K$ and $H_t$ is the fiber $h^{-1}(t)$. Since the corresponding double cover $X_t$ (which is a fiber over $t$ of $h \circ f$) only admits nodes as singularities, the multiplicity $m$ of $B \cap H_t$ at $c$ is at most 2, as $f_t : X_t \rightarrow Y_t$ \'etale locally near $c$ is given by the equation
        \[
            \Spec(\overline{K}[y,z]/(z^2-y^m)) \rightarrow \Spec(\overline{K}[y]) \text{, where $y$ is the uniformizer of $c \in H_t$.}
        \]
        Since $B$ does not contain any irreducible component of geometric fibers of $h$ (as any geometric fiber of $h \circ f$ is reduced), above implies that the multiplicity of $B$ at any geometric point is at most 2. Thus, the support of $B$ possibly admits plane double point curve singularities, \'etale locally of the form $y^2-x^m=0$ with $m \in \N_{\ge 2}$, on the geometrically reduced locus of $B$, and is smooth elsewhere. Those singularities are in fact $A_{m-1}$ (curve) singularities.
    \end{exmp}
    
    \par Example~\ref{eg:sstab_hyp_sing_P1} and \ref{eg:branch_div} illustrate that a general stable odd hyperelliptic genus $g$ model often gives a mildly singular $\Pb^1$-fibration and mildly singular branch divisor on it. On the other hand, we could instead consider the hyperelliptic fibrations with smooth $\Pb^1$-bundle $H$, but with $X$ and the branch divisor having worse singularities. Then, each fiber of $X$ is irreducible and is a double cover of $\Pb^1$ branched over $2g+2$ number of points, where many of these points could collide. For instance, if $l$ branch points collide, then the preimage has $A_{l-1}$-singularity on the fiber, given \'etale locally by an equation $y^2-x^l=0$ . Such a curve is called the quasi--admissible hyperelliptic curve, defined in Definition~\ref{def:qadm_hyp}. Quasi--admissible hyperelliptic curves over $\Pb^1_K$ (which are non-isotrivial) are equivalent to the following:
    
    \begin{defn}\label{def:qadm_fib}
        A hyperelliptic fibration $(X,H,h,f,s,s')$ is \emph{quasi--admissible} if for every geometric point $c \in C$, $f$ restricted to the fibers of $X$ and $H$ is quasi--admissible.
        We assume that $X$ is not isotrivial over $\Pb^1$, i.e., all geometric fibers are isomorphic.
    \end{defn}
    
    \begin{rmk}\label{rmk:extension_scheme}
        Observe that the Definitions~\ref{def:rational_fib}, \ref{def:hypell_fib}, \ref{def:qstb_hypfib}, and \ref{def:qadm_fib} should be interpreted as rational / hyperelliptic / stable  / quasi--admissible curves over $\Pb^1_K$, instead of a point $\Spec~K$ (just as in Definition~\ref{def:qadm_hyp}). Thus, these definitions can be extended to corresponding curves over a general scheme $T$, assuming that any geometric point $t$ of $T$ has the property that the characteristic of the residue field is 0 or larger than $2g+1$ (when instead $g=1$, the standard definition of semistable over $T$ is more delicate whenever the characteristic of geometric point is 2 or 3, and is not analogous to the definitions proposed in this paper).
    \end{rmk}
    
    \par In particular, a quasi--admissible hyperelliptic fibration $(X,H,h,f,s,s')$ has the property that $H$ is a $\Pb^1$-bundle, and on each geometric fiber of $H$, each point of the branch divisor away from $s'$ has the multiplicity at most $2g$ . Moreover, $X$ is the double cover of $H$ branched along the branch divisor (which coincides with the branch locus).

   \medskip
    
    \par To parameterize such fibrations, we first consider the moduli stack $\Hc_{2g}[2g-1]$ of quasi--admissible hyperelliptic genus $g$ curves characterized by \cite[Proposition 4.2(1)]{Fedorchuk} :
    
    \begin{prop}\label{prop:moduli_qadm_curve}
        If $p:=\mathrm{char}(K)$ is $0$ or $> 2g+1$, then the moduli stack $\Hc_{2g}[2g-1]$ of quasi--admissible hyperelliptic genus $g$ curves is a tame Deligne--Mumford stack isomorphic to $\Pc(4,6,8,\dotsc,4g+2)$, where a point $(a_4,a_6,a_8,\dotsc,a_{4g+2})$ of $\Pc(4,6,8,\dotsc,4g+2)$ corresponds to the quasi--admissible hyperelliptic genus $g$ curve with the Weierstrass equation
        \begin{equation}\label{eqn:qadm}
        y^2=x^{2g+1}+a_4x^{2g-1}+a_6x^{2g-2}+a_8x^{2g-3}+\dotsb+a_{4g+2}
        \end{equation}
    \end{prop}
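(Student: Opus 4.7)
The plan is to reinterpret Fedorchuk's classification via an explicit parametrization of quasi-admissible hyperelliptic curves by the coefficients of a normalized Weierstrass equation. Given a quasi-admissible hyperelliptic genus $g$ curve $C$ over a scheme $T$ (with the standing characteristic hypothesis on geometric fibers), the marked Weierstrass section $\infty$ together with the degree $2$ morphism $\phi \colon C \to H$ (where $H \cong \Pb^1_T$) presents $C$ on the affine chart away from $\infty$ by an equation
\[
y^2 + g(x) y = f(x), \qquad \deg_x f = 2g+1, \qquad \deg_x g \le g.
\]
Since $\mathrm{char}(K) \neq 2$, completing the square via $y \mapsto y - g(x)/2$ yields $y^2 = F(x)$ with $F$ monic of degree $2g+1$. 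A Tschirnhaus transformation $x \mapsto x - c$ with $c = a_{2g}/(2g+1)$ --- well defined because $2g+1$ is invertible by the characteristic hypothesis --- eliminates the $x^{2g}$ term and produces exactly the form \eqref{eqn:qadm}.

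Next, I would analyze the residual $\Gb_m$-symmetry of this normal form. Any isomorphism between two such normalized quasi-admissible curves fixing the marked Weierstrass point at $\infty$ and respecting the $\phi$-structure must be of the form $(x,y) \mapsto (\alpha^2 x, \alpha^{2g+1} y)$ for some $\alpha \in \Gb_m$, under which the coefficients transform as $a_i \mapsto \alpha^i a_i$; thus $a_i$ has weight $i$. Globalizing this observation realizes $\Hc_{2g}[2g-1]$ as the quotient of an open subscheme of the affine space $\Ab^{2g}_{a_4,\dotsc,a_{4g+2}}$ by this weighted $\Gb_m$-action.

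The key remaining step is to identify the forbidden locus with the origin. The ``at-worst-$A_{2g-1}$'' condition translates into the requirement that $F(x)$ does not vanish to order $\ge 2g+1$ at any point; since $F$ is monic of degree $2g+1$, such vanishing would force $F(x) = (x-c)^{2g+1}$. Expanding, the coefficient of $x^{2g}$ equals $-(2g+1)c$, which the Tschirnhaus normalization forces to be zero, so $c = 0$ and $F(x) = x^{2g+1}$, i.e.\ $(a_4,\dotsc,a_{4g+2}) = 0$. Conversely, $(a_4,\dotsc,a_{4g+2}) \neq 0$ automatically ensures that the singularities of $C$ are at worst $A_{2g-1}$. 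Hence $\Hc_{2g}[2g-1] \iso [(\Ab^{2g}_{a_4,\dotsc,a_{4g+2}} \setminus 0)/\Gb_m] = \Pc(4,6,8,\dotsc,4g+2)$ by Definition~\ref{def:wtproj}, and the tameness together with the Deligne--Mumford property follow immediately from Proposition~\ref{prop:wtprojtame}, since $\mathrm{char}(K)$ does not divide any weight in $\{4,6,\dotsc,4g+2\}$.

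The main obstacle will be carrying out the Weierstrass normalization in families over an arbitrary base $T$: one must verify that the half-section $g(x)/2$ and the Tschirnhaus shift $a_{2g}/(2g+1)$ descend to well-defined global sections with the expected $\Gb_m$-equivariance, which relies precisely on the characteristic hypothesis. The residual singularity analysis is the classical $A_n$-theory for double covers of $\Pb^1$ and is subsumed by Fedorchuk's \cite[Proposition 4.2(1)]{Fedorchuk}.
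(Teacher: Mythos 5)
Your proposal follows the same route as the paper: normalize the Weierstrass equation by completing the square and applying the Tschirnhaus shift $x \mapsto x - a_{2g}/(2g+1)$ (valid because $2g+1$ is invertible), identify the residual weighted $\Gb_m$-action with weights $4,6,\dots,4g+2$ and the excluded locus as the origin, and invoke Proposition~\ref{prop:wtprojtame} for tameness and the Deligne--Mumford property. The paper's proof is terser---it simply records that Fedorchuk's characteristic-$0$ argument transfers verbatim once the shift is available in characteristic $>2g+1$---whereas you spell out the intermediate steps, but the mathematical content and the source of the characteristic hypothesis are the same.
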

    
    \begin{proof}
        Proof of \cite[Proposition 4.2(1)]{Fedorchuk} is originally done when $p=0$, so it suffices to show that the proof in loc.cit. extends to the case when $p>2g+1$ .
        
        When $p=0$, the proof of loc.cit. shows that the quasi--admissible hyperelliptic curves are characterized by the base $\Pb^1$ with the branch locus of degree $2g+1$ on $\Ab^1=\Pb^1\setminus \infty$, of the form
        \[
            x^{2g+1}+a_2x^{2g}+a_4x^{2g-1}+a_6x^{2g-2}+a_8x^{2g-3}+\dotsb+a_{4g+2}=0
        \]
        where $a_2=0$ and not all of the rest of $a_i$'s vanish. When $p>2g+1$, any monic polynomial of degree $2g+1$ with not all roots being identical can be written in the same way (via same method) by replacing $x$ by $x-\frac{a_2}{(2g+1)}$ (this is allowed as $2g+1<p$ is invertible). Hence, the moduli stack is indeed isomorphic to $\Pc(4,6,8,\dotsc,4g+2)$, with $a_{2i}$'s referring to the standard coordinates of $\Pc(4,6,8,\dotsc,4g+2)$ of degree $2i$ .
        
        Since $p>2g+1$. $\Pc(4,6,8,\dotsc,4g+2)$ is tame Deligne--Mumford by Proposition~\ref{prop:wtprojtame} .
    \end{proof}
    
    We are now ready to prove Proposition~\ref{prop:moduli_qadm_fib}.

    \begin{proof}[Proof of Proposition~\ref{prop:moduli_qadm_fib}]

        By the definition of the universal family $p$, any quasi--admissible hyperelliptic genus $g$ fibration $f: Y \rightarrow \Pb^1$ comes from a morphism $\varphi_f:\Pb^1 \rightarrow \Hc_{2g}[2g-1]$ and vice versa. As this correspondence also works in families, the moduli stack $\Lc_g$ is a substack of $\Hom(\Pb^1,\Hc_{2g}[2g-1])$. As $\Hc_{2g}[2g-1]$ is tame Deligne--Mumford by Proposition~\ref{prop:moduli_qadm_curve}, the Hom stack $\Hom(\Pb^{1},\Hc_{2g}[2g-1])$ is Deligne--Mumford by \cite{Olsson}. Tameness follows from \cite{AOV}, as $\Hc_{2g}[2g-1]$ itself is tame. Thus, $\Lc_g$ is tame Deligne--Mumford as well.
        
        Since any quasi--admissible hyperelliptic genus $g$ fibration $f$ is not isotrivial, $\varphi_f$ must be a non-trivial morphism, i.e., the image of $f$ in $\Hc_{2g}[2g-1])$ is 1-dimensional. Since non-trivialness of a morphism is an clopen condition, the corresponding clopen locus (consisting of the union of connected components) $\Hom_{>0}(\Pb^1,\Hc_{2g}[2g-1])$ is indeed isomorphic to $\Lc_g$.
    \end{proof}

    \par We now have the following arithmetic invariant of the moduli stack $\Lc_{g, |\Delta_{g}| \cdot n}$ over $\Fb_q$.

    \begin{cor} [Motive and weighted point count of $\Lc_{g, |\Delta_{g}| \cdot n}$ over $\Fb_q$]
        \label{cor:pointcount}
        If $\mathrm{char}(K) = 0$ or $> 2g+1$, then the motive of $\Lc_{g, |\Delta_{g}| \cdot n}$ in the Grothendieck ring of $K$--stacks $K_0(\mathrm{Stck}_{K})$ is equivalent to
        \begingroup
        \allowdisplaybreaks
        \begin{align*}
        \left[\Lc_{g, |\Delta_{g}| \cdot n}\right] &= \left( \sum\limits_{i=0}^{2g-1} \Lb^{i} \right) \cdot \left(\Lb^{|\vec{\lambda_g}|n}-\Lb^{|\vec{\lambda_g}|n - 2g+1} \right)\\
        &= \Lb^{2g(2g+3)n} \cdot (\Lb^{2g-1} + \Lb^{2g-2} + \dotsb + \Lb^2 + \Lb^1 -\Lb^{-1}-\Lb^{-2}- \dotsb - \Lb^{-2g+2} - \Lb^{-2g+1}).
        \end{align*}
        \endgroup
        If $K=\Fb_q$ with $\mathrm{char}(\Fb_q)> 2g+1$, then \[\#_q\left(\Lc_{g, |\Delta_{g}| \cdot n}\right) = q^{2g(2g+3)n} \cdot (q^{2g-1} + q^{2g-2} + \dotsb + q^2 + q^1 -q^{-1} -q^{-2} - \dotsb - q^{-2g+2} - q^{-2g+1}) \;.\]
    \end{cor}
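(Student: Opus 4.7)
The plan is to directly chain together the three main preparatory results from Sections~\ref{sec:ModuliofHom}, \ref{sec:count}, and \ref{sec:qadm_fib} and then apply the ring homomorphism $\#_q$ at the very end. First, by Proposition~\ref{prop:moduli_qadm_fib}, the moduli stack $\Lc_g$ of (non-isotrivial) quasi-admissible hyperelliptic genus $g$ fibrations over $\Pb^1$ with a marked Weierstrass section is isomorphic to the open substack $\Hom_{>0}(\Pb^1, \Hc_{2g}[2g-1])$. By Proposition~\ref{prop:moduli_qadm_curve}, $\Hc_{2g}[2g-1] \cong \Pc(\vec{\lambda_g})$ with $\vec{\lambda_g} = (4,6,8,\dotsc,4g+2)$, so that $\Lc_g \cong \Hom_{>0}(\Pb^1, \Pc(\vec{\lambda_g}))$.

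Next, I would pin down which connected component corresponds to the fixed discriminant degree $|\Delta_g|\cdot n = 4g(2g+1)n$. As explained in the introduction, a morphism $f : \Pb^1 \to \Pc(\vec{\lambda_g})$ with $f^*\Oc_{\Pc(\vec{\lambda_g})}(1) \cong \Oc_{\Pb^1}(n)$ pulls back the homogeneous hyperelliptic discriminant $\Delta_g$ (of weighted degree $4g(2g+1)$) to a section on $\Pb^1$ of degree exactly $4g(2g+1)n$. This identifies $\Lc_{g, |\Delta_g|\cdot n}$ with the Hom stack $\Hom_n(\Pb^1, \Pc(\vec{\lambda_g}))$, which by Proposition~\ref{prop:DMstack} is a single smooth tame Deligne--Mumford component of the correct dimension under the hypothesis $\mathrm{char}(K) = 0$ or $> 2g+1$.

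Now I would apply Proposition~\ref{prop:motivecount_Hom} with $N = 2g-1$ (the length of $\vec{\lambda_g}$ minus one) and compute
\[
    |\vec{\lambda_g}| \;=\; \sum_{i=0}^{2g-1}(4+2i) \;=\; 8g + 2 \cdot \frac{(2g-1)(2g)}{2} \;=\; 4g^2 + 6g \;=\; 2g(2g+3),
\]
so that
\[
    \{\Lc_{g, |\Delta_g|\cdot n}\} \;=\; \left(\sum_{i=0}^{2g-1}\Lb^i\right)\cdot\left(\Lb^{2g(2g+3)n} - \Lb^{2g(2g+3)n - 2g+1}\right).
\]
Factoring out $\Lb^{2g(2g+3)n}$ and re-indexing the negative sum via $j = i - 2g + 1$ yields the claimed symmetric expression
\[
    \Lb^{2g(2g+3)n}\cdot\left(\Lb^{2g-1} + \Lb^{2g-2} + \dotsb + \Lb - \Lb^{-1} - \dotsb - \Lb^{-2g+1}\right),
\]
since the constant terms $1$ cancel. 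Finally, because $\#_q : K_0(\mathrm{Stck}_{\Fb_q}) \to \Q$ is a ring homomorphism sending $\Lb = \{\A^1_{\Fb_q}\}$ to $q$, the weighted point count formula follows by substitution when $K = \Fb_q$ with $\mathrm{char}(\Fb_q) > 2g+1$.

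There is really no hard step: the entire proof is a bookkeeping exercise, with the only point requiring care being the identification of the discriminant-degree component with the fixed-degree Hom stack, and verifying the arithmetic sum $|\vec{\lambda_g}| = 2g(2g+3)$ so that Proposition~\ref{prop:motivecount_Hom} produces the stated exponent $2g(2g+3)n$.
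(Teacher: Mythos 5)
Your proposal is correct and takes essentially the same route as the paper's own (very terse) proof, which simply combines Propositions~\ref{prop:moduli_qadm_curve} and \ref{prop:moduli_qadm_fib} with the motive formula for the Hom stack. You have just spelled out the bookkeeping: the arithmetic $|\vec{\lambda_g}|=\sum_{i=0}^{2g-1}(4+2i)=2g(2g+3)$, the re-indexing that produces the symmetric Laurent polynomial in $\Lb$, and the identification of the discriminant-degree-$4g(2g+1)n$ component with $\Hom_n(\Pb^1,\Pc(\vec{\lambda_g}))$, which the paper states separately in Proposition~\ref{prop:qadm_fib}.
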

    
    \begin{proof}
    This follows from combining Proposition~\ref{prop:moduli_qadm_curve} and Proposition~\ref{prop:moduli_qadm_fib} with Corollary~\ref{cor:maincount}.        
    \end{proof}

    Explicitly via the birational transformation from one family of curves to another, we construct a geometric transformation from $\mathcal{S}_g(K)$ the $K$--points of the moduli functor $\mathcal{S}_g$ of the stable odd hyperelliptic genus $g \ge 2$ models (see Definition~\ref{def:qstb_hypfib}) over $\Pb^1$ with a marked Weierstrass point to $\Lc_g(K)$ the $K$--points of the moduli functor $\Lc_g \coloneqq \mathrm{Hom}(\Pb^1,\Hc_{2g}[2g-1] \cong \Pc(4,6,8,\dotsc,4g+2))$. In fact, this transformation is injective as Theorem~\ref{thm:hypell_surf_and_qadm_fib} shows.

    \begin{proof}[Proof of Theorem~\ref{thm:hypell_surf_and_qadm_fib}]

    %\subsection{Proof of Theorem~\ref{thm:hypell_surf_and_qadm_fib}}\label{subsec:proof_thm_hypell_qadm}
    The key idea of proof is to construct $\Fc$ by using relative canonical model, a particular birational transformation from the subject of relative minimal model program. We prove this in a few steps, beginning with a preliminary step. We construct and verify properties of $\Fc$ in the other steps:
    
    \medskip
    
    \noindent \textbf{Step 1. Log canonical singularities and log canonical models}. The main reference here is \cite{Fujino} when $\mathrm{char}(K)=0$, and \cite[\S 5--6]{Tanaka} when $\mathrm{char}(K) \neq 0$, noting that both references deal with algebraically closed fields instead.
    
    \par First, we need the following definition for types of singularities of a pair $(S,D)$ of a normal $\overline{K}$--surface $S$ and an effective $\Rb$--divisor $D$ on $S$:
    
    \begin{defn}(\cite[\S 2.4]{Fujino}, \cite[Definition 5.1]{Tanaka}) \label{def:lc_sing}
        A pair $(S, D)$ is log canonical if
        \begin{enumerate}
            \item the log canonical divisor $K_S + D$ is $\Rb$--Cartier,
            \item for any proper birational morphism $\pi : W \rightarrow S$ and the divisor $D_W$ defined by
            \[
                K_W+D_W=\pi^*(K_S+D),
            \]
            then $D_W \le 1$, i.e., when writing $D_W=\sum_i a_iE_i$ as a sum of distinct irreducible divisors $E_i$, $a_i \le 1$ for every $i$.
        \end{enumerate}
        Moreover, if a pair $(S, D)$ is defined over a non-algebraically closed field $K$, then it is called log canonical if its base-change to $\overline{K}$ is.
    \end{defn}
    
    For instance, if $S$ is smooth and $D$ is a reduced simple normal crossing divisor, then $(S, D)$ is log canonical. Similarly, if $w \in \Rb \cap [0,1]$, then $(S, wD)$ is log canonical under the same assumptions. Note that we cannot consider $w>1$ under the same assumptions, as the weight on each irreducible component of $D$ must be at most 1.
    
    For example, consider a stable odd hyperelliptic genus $g$ model $(X,H,h,f,s,s')$ over $K$, consider the pair $(H_{\overline{K}}, wB_{\overline{K}} + (s'(\Pb^1_K))_{\overline{K}})$ defined over $\overline{K}$ where the branch divisor of $h$ decomposes as $B \sqcup s'(\Pb^1_K)$ and $w \in \Rb \cap (0,1/2]$ is a weight (since $B$ can have components of multiplicity 2 by Example~\ref{eg:branch_div}, we consider weights at most $1/2$). To claim that this pair is log canonical under additional condition on $w$, it suffices to consider neighborhoods of singular points of $H_{\overline{K}}$ and support of $B_{\overline{K}}$ by the above observation.
    
    First, recall that the isolated singularities of $H_{\overline{K}}$ away from the support of $wB_{\overline{K}} + (s'(\Pb^1_K))_{\overline{K}}$ is of type $A_{l'}$ for some $l'$ by Example~\ref{eg:sstab_hyp_sing_P1}. Hence, the pair is log canonical at neighborhoods of such points (in fact, those points are called canonical singular points of $H_{\overline{K}}$). Also, at a singular point $c$ of the support of $B_{\overline{K}}$, $H_{\overline{K}}$ is smooth and $B_{\overline{K}}$ is reduced at $c$ but $B_{\overline{K}}$ admits $A_l$--singularities by Example~\ref{eg:branch_div}. Therefore, the pair has log canonical singularities whenever $w \le \frac12 + \frac{1}{l+1}$ by \cite{Jarvilehto} (summariezd in \cite[Introduction]{GHM}, where the log canonical threshold is the supremum of values $w$ that makes the pair log canonical).
    
    \medskip
    
    To construct a log canonical model, consider a pair $(S, D)$ as the beginning of this step with projective $\overline{K}$--morphism $f : S \rightarrow C$ into a $\overline{K}$--variety $C$, and assume that $D$ is $\Qb$--divisor and $S$ is $\Qb$--factorial. If $(S,D)$ is log canonical with $K_S+D$ not $f$--antinef, then \cite[Pages 1750--1751]{HP} uses key birational geometry results from \cite{Fujino, Tanaka} to construct the $f$--log canonical model, defined below. In fact, analogous arguments from \cite[Proof of Proposition 11]{HP} implies that the same procedure can be applied to $f:(S, D) \rightarrow C$ over a field $K$, leading to the following definition:
    
    \begin{defn}\label{def:rel_lc_program}
        Suppose that $(S, D)$ is a log canonical pair over a field $K$ where $S$ is a normal projective $\Qb$--factorial surface and $D$ is a $\Qb$--divisor. Assume that $f: S \rightarrow C$ is a projective morphism into a $K$--variety $C$ with $K_S+D$ not $f$--antinef. If $K$ is algebraically closed, then the $f$--log canonical model is a pair $(S',D')$ with a projective morphism $f':S' \rightarrow C$, where
        \[
            S':=\underline{\Proj}\bigoplus_{m \in \N}f_*\Oc_{S}(m(K_S+D))
        \]
        and $D':=\phi_*D$ where $\phi:S \rightarrow S'$ is the induced birational morphism.
        
        If $K$ is not algebraically closed, then the $f$--log canonical model is the $\Gal(\overline{K}/K)$--fixed locus of the $f_{\overline{K}}$--log canonical model of $(S_{\overline{K}},D_{\overline{K}})$.
    \end{defn}

    \medskip
    
    \noindent \textbf{Step 2. Construction of faithful $\mathcal F:\mathcal{S}_g(K) \rightarrow \Lc_g(K)$}. Fix any member of $\mathcal{S}_g(K)$, i.e., a stable odd hyperelliptic genus $g$ model $(X,H,h,f,s,s')$ . Denote $B\sqcup s'(\Pb^1_K)$ to be the divisorial part of the branch locus of $f:X \rightarrow H$ ($B$ is also called branch divisor in literature). Notice that $h$ restricted to $B$ has degree $2g+1$. By Step 1 above, $(H,\frac{1}{2g}B+s'(C))$ is log canonical. Take $h$--log canonical model of $(H,\frac{1}{2g}B+s'(C))/\Pb^1_K$ to obtain a birational $\Pb_K^1$--morphism $\varphi: (H,\frac{1}{2g}B+s'(\Pb^1_K)) \rightarrow (H',D')$ where $H'$ is a rational fibration over $K$ and $D'$ is a $\Rb$--divisor of $H'$ defined over $K$ (c.f. Definition~\ref{def:rel_lc_program}). Since the only canonical rational curve, defined over an algebraically closed field with $\frac{1}{2g}$ weights on $(2g+1)$ points and weight 1 on another point, is a smooth rational curve where the point of weight 1 is distinct from the other points (of weight $\frac{1}{2g}$), $H'$ is a $\Pb^1$--bundle (given by $h':H' \rightarrow \Pb^1_K$). This description shows that $D'$ decomposes into $\frac{1}{2g}A'+T'$ where $A'$ is a divisor of $H'$ and $T'$ consists of weight 1 points on each geometric fiber of $H'/\Pb_K^1$ . Thus, $T'$ comes from a section $t'$ of $h'$ . We will show that $H'$ is the $\Pb^1$--fibration associated to the desired quasi--admissible hyperelliptic genus $g$ fibration.
        
    To finish the construction of the quasi--admissible fibration, take Stein factorization on $\varphi \circ f$. This gives a finite morphism $f':X' \rightarrow H'$ and a morphism $\psi:X \rightarrow X'$ with geometrically connected fibers such that $\varphi \circ f=f' \circ \psi$ . Since $f$ is finite of degree 2 and $\varphi$ is birational, $f'$ is finite of degree 2 and $\psi$ is birational. Moreover, $B':=A'+T'$ is the branch locus of $f'$ . By calling $t$ to be the unique lift of $t'$ on $h' \circ f'$, $(X',H',h',f',t,t')$ is the desired quasi--admissible hyperelliptic fibration. Define $\mathcal F(X,H,h,f,s,s'):=(X',H',h',f',t,t')$ .
        
    \medskip
        
    To see that $\mathcal F$ is faithful, suppose that there are two isomorphisms 
        \[
        (\alpha_i,\beta_i):(X_1,H_1,h_1,f_1,s_1,s_1') \rightarrow (X_2,H_2,h_2,f_2,s_2,s_2')
        \]
    between stable odd hyperelliptic genus $g$ models that induce the same isomorphism under $\mathcal F$ :
        \[
        (\alpha',\beta'):\mathcal F(X_1,H_1,h_1,f_1,s_1,s_1') \rightarrow \mathcal F(X_2,H_2,h_2,f_2,s_2,s_2')
        \]
    Denote $(X_j',H_j',h_j',f_j',t_j,t_j')=\mathcal F(X_j,H_j,h_j,f_j,s_j,s'_j)$ for $j=1,2$ . From the construction of $\mathcal F$ shown above, induced morphisms $X_j \rightarrow X_j'$ and $H_j \rightarrow H_j'$ are birational for each $j$. Since they are separated varieties over $K$, $(\alpha_1,\beta_1)$ must be equal to $(\alpha_2,\beta_2)$ , hence $\mathcal F$ is faithful.
        
    \medskip

    \noindent \textbf{Step 3. Fullness of $\mathcal F$}. Given any isomorphism $\psi$ between $( X_i' , H_i' , h_i' , f_i' , t_i , t_i')$'s in $\Lc_g(K)$ as images of $( X_i , H_i , h_i , f_i , s_i , s'_i ) \in \mathcal{S}_g(K)$ under $\Fc$, notice that $h_i'$'s and $h_i' \circ f_i'$'s have smooth geometric generic fibers for $i=1,2$ and $\psi$ comes in pairs of isomorphisms $\psi_1:X_1' \rightarrow X_2'$ and $\psi_2:H_1' \rightarrow H_2'$ (so denote $\psi=(\psi_1,\psi_2)$). Then, $\psi$ lifts to a pair of birational maps $\overline{\psi}=(\overline{\psi}_1,\overline{\psi}_2)$ between $X_i$'s and $H_i$'s which induce isomorphisms on geometric generic fibers and irreducible components of any geometric fiber meeting the sections $s_i$'s or $s_i'$'s. To claim that those extend to isomorphisms that induce $\psi_i$'s, it suffices to understand geometric properties of related moduli stacks, as we claim that $\psi_i$'s can be interpreted as an element of $Isom$ spaces of such stacks.
    
    \par Observe first that for each $i=1,2$, $H_i$ is a $\Zb/2\Zb$--quotient of $X_i$, and $K_{X_i}+s_i(\Pb^1_K)$ is ample over $\Pb^1_K$ by the defintion. Since the branch divisor of $f_i$ is $B_i \sqcup s_i'(\Pb^1_K)$, the log canonical divisor $K_{H_i}+\frac12B_i+ s'_i(\Pb^1_K)$ is also ample over $\Pb^1_K$ as $f_i^*(K_{H_i}+\frac12B_i+ s'_i(\Pb^1_K))=K_{X_i}+s_i(\Pb^1_K)$. Since $X_i$ admits nodes as the only singularities of geometric fibers, $B_i$ on each fiber has multiplicity at most $2$ at any $\overline{K}$--points in the support. Therefore, fibers of the pair $(H_i,\frac{1}{2}B_i+s'_i(\Pb^1_K))$ are $((\frac12,2g+1),(1,1))$--stable curves of genus $0$ in the sense of \cite[\S 2.1.3]{Hassett}, meaning that $H_i$ for each $i$ is a family of such curves over $\Pb^1_K$. Note that the moduli stack $\Mg_{0,((\frac12,2g+1),(1,1))}$ of $((\frac12,2g+1),(1,1))$--stable curves of genus $0$ is a proper (so separated) Deligne--Mumford stack (it easily follows from loc.cit. and \cite[Theorem 2.1]{Hassett}), and $H_i$ is realized as $\alpha_i:\Pb^1_K \rightarrow \Mg_{0,((\frac12,2g+1),(1,1))}$. Since there is a nonempty open subset $U \subset \Pb^1_K$ such that $\psi_2$ induces an isomorphism between $h_i^{-1}(U)$'s, $\overline{\psi}_2$ is an element of $Isom_{\Mg_{0,((\frac12,2g+1),(1,1))}}(\alpha_1,\alpha_2)(U)$. Then, separatedness of $\Mg_{0,((\frac12,2g+1),(1,1))}$ implies that $\overline{\psi_2}$ extends to an isomorphism between $H_i$'s.
    
    \par Similar argument shows that $\psi_1$ also extends to an isomorphism between $X_i$'s (as $\overline{H}_{g,\underline{1}} \subset \Mg_{g,\underline{1}}$ is a separated Deligne--Mumford stack by \cite{Knudsen}), so it suffices to show that $\overline{\psi}_i$'s commute with $f_i$'s and induce $\psi$. The commutativity of $\overline{\psi}_i$'s follows because $H_i$'s are $\Zb/2\Zb$--quotients of $X_i$'s and any isomorphism between families of stable hyperelliptic curves with marked Weierstrass points commute with $\Zb/2\Zb$--actions. Because the birational morphisms $X_i \rightarrow X_i'$ and $H_i \rightarrow H_i'$ for any $i$ contract all but irreducible components of fibers over $\Pb^1_K$ meeting marked sections, $\overline{\psi}:=(\overline{\psi}_1,\overline{\psi}_2)$ induce $\psi$ as well. Henceforth, $\overline{\psi}$ maps to $\psi$ under $\Fc$, proving that $\Fc$ is full. %\textbf{This finishes the proof of Theorem~\ref{thm:hypell_surf_and_qadm_fib}}.
    \end{proof}

    \begin{rmk}\label{rmk:nonextension_monomorphism}
        Due to log abundance being a conjecture for higher dimensions, which is a key ingredient of the existence of log canonical models (c.f. \cite[Remark 13]{HP}), it is unclear whether $\mathcal F$ in the proof above extends to a functor from the moduli of stable odd hyperelliptic genus $g$ models to $\Lc_g$. If it extends, we expect the functor to be still fully faithful, as opposed to \cite[Remark 13]{HP} for birational transformations between semistable elliptic surfaces and stable elliptic curves over $\Pb^1$. The key obstruction on \cite[Remark 13]{HP}, assuming that the functor discussed in loc.cit. (which is an equivalence) extends, is that the essential surjectivity may not hold on the extension, whereas the functor from Theorem~\ref{thm:hypell_surf_and_qadm_fib} is not even essentially surjective to begin with.
    \end{rmk}
    
    \subsection{Hyperelliptic discriminant $\Delta_{g}$ of quasi--admissible hyperelliptic genus $g$ fibration}\label{subsec:disc}
    
    \par As we consider the algebraic surfaces $X$ as fibrations in genus $g$ curves over $\Pb^1$, the discriminant $\Delta_{g}(X)$ is the basic invariant of $X$. For the quasi--admissible hyperelliptic genus $g$ fibrations over $\Pb^1$, we have the work of \cite{Lockhart, Liu2} which describes the hyperelliptic discriminant $\Delta_{g}(X)$.

    \begin{defn} \cite[Definition 1.6, Proposition 1.10]{Lockhart}\label{Hyp_Disc}
        The hyperelliptic discriminant $\Delta_{g}$ of the monic odd--degree Weierstrass equation $y^2 = x^{2g+1} + a_{4}x^{2g-1} + a_{6}x^{2g-2} + a_{8}x^{2g-3} + \cdots + a_{4g+2}$ over a base field $K$ with $\mathrm{char}(K) \neq 2$ is 
    \[\Delta_{g} = 2^{4g} \cdot \text{Disc}(x^{2g+1} + a_{4}x^{2g-1} + a_{6}x^{2g-2} + \cdots + a_{4g+2})  \]
        which has $\deg(\Delta_{g}):=|\Delta_{g}| = 4g(2g+1)$ formally when we associate each variable $a_i$ with degree $i$.
    \end{defn}
    
    \par Note that when $g=1$, the discriminant $\Delta_1$ of the short Weierstrass equation $y^2=x^3+a_4x+a_6$ coincides with the usual discriminant $-16(4a_4^3-27a_6^2)$ of an elliptic curve. We can now formulate the moduli stack $\Lc_{g,|\Delta_{g}| \cdot n}$ of quasi--admissible fibration over $\Pb^1$ with a fixed discriminant degree $|\Delta_{g}| \cdot n = 4g(2g+1)n$ and a marked Weierstrass point :

    \begin{prop}\label{prop:qadm_fib}
        Assume $\mathrm{char}(K) = 0$ or $> 2g+1$. Then, the moduli stack $\Lc_{g,|\Delta_{g}| \cdot n}$ of quasi--admissible hyperelliptic genus $g$ fibrations over $\Pb^{1}_{K}$ with a marked Weierstrass point and a hyperelliptic discriminant of degree $|\Delta_{g}| \cdot n = 4g(2g+1)n$ over a base field $K$  is the tame Deligne--Mumford Hom stack $\Hom_n(\Pb^1,\Hc_{2g}[2g-1])$ parameterizing the $K$-morphisms $f:\Pb^1 \rightarrow \Hc_{2g}[2g-1]$ with $\Hc_{2g}[2g-1] \iso \Pc(\vec{\lambda_g}) = \Pc(4,6,8,\dotsc,4g+2)$ such that $f^*\Oc_{\Pc(\vec{\lambda}_g)}(1) \cong \Oc_{\Pb^1}(n)$ with $n \in \N$ .
    \end{prop}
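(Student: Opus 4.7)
The plan is to realize $\Lc_{g, |\Delta_g| \cdot n}$ as the connected component of $\Lc_g$ cut out by the condition $f^*\Oc_{\Pc(\vec{\lambda}_g)}(1) \cong \Oc_{\Pb^1}(n)$ and then match the resulting degree stratification to the discriminant degree stratification. By Proposition~\ref{prop:moduli_qadm_fib}, $\Lc_g \cong \Hom_{>0}(\Pb^1, \Hc_{2g}[2g-1])$ as tame Deligne--Mumford stacks under the characteristic hypothesis, and under the identification $\Hc_{2g}[2g-1] \cong \Pc(\vec{\lambda}_g) = \Pc(4,6,\dotsc,4g+2)$ of Proposition~\ref{prop:moduli_qadm_curve} this Hom stack decomposes into the clopen union $\bigsqcup_{n \in \N} \Hom_n(\Pb^1, \Pc(\vec{\lambda}_g))$ indexed by the degree $n = \deg f^*\Oc_{\Pc(\vec{\lambda}_g)}(1)$. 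Each piece $\Hom_n(\Pb^1, \Pc(\vec{\lambda}_g))$ inherits tameness and the Deligne--Mumford property as a clopen substack, consistent with Proposition~\ref{prop:DMstack}.

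The remaining task is to verify that pinning down the degree $n$ is equivalent to fixing the discriminant degree at $|\Delta_g| \cdot n = 4g(2g+1)n$. By Definition~\ref{Hyp_Disc}, $\Delta_g$ is a polynomial in $a_4, a_6, \dotsc, a_{4g+2}$ which is weighted homogeneous of degree $4g(2g+1)$ with respect to the grading $\deg a_i = i$. Under the presentation $\Pc(\vec{\lambda}_g) = [(\Ab^{2g}_{a_4,\dotsc,a_{4g+2}} \setminus 0)/\Gb_m]$ from Definition~\ref{def:wtproj}, this precisely says that $\Delta_g$ descends to a global section of $\Oc_{\Pc(\vec{\lambda}_g)}(4g(2g+1))$. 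Consequently, for any $f : \Pb^1 \to \Pc(\vec{\lambda}_g)$ with $f^*\Oc_{\Pc(\vec{\lambda}_g)}(1) \cong \Oc_{\Pb^1}(n)$, the pullback $f^*\Delta_g$ is a nonzero section of $\Oc_{\Pb^1}(4g(2g+1)n)$ whose vanishing locus has degree $4g(2g+1)n$.

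The main thing to check carefully is that this pulled back section is genuinely the Lockhart--Liu hyperelliptic discriminant of the family corresponding to $f$, rather than just a section of the right line bundle. I would handle this by invoking the universal family on $\Pc(\vec{\lambda}_g) \cong \Hc_{2g}[2g-1]$: over the chart $\Ab^{2g} \setminus 0$, the family is given by the universal Weierstrass equation in \eqref{eqn:qadm} with coefficients the tautological $a_i$, whose hyperelliptic discriminant is the polynomial $\Delta_g$ itself. Pulling this family back along $f$ replaces each $a_i$ by $f^*a_i \in H^0(\Oc_{\Pb^1}(in))$, so naturality of discriminant formation (valid in characteristic $0$ or $>2g+1$, where the discriminant is a polynomial identity in the coefficients) identifies the discriminant of the pulled back family with $f^*\Delta_g$. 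Combining this identification with the clopen decomposition above yields the claimed characterization of $\Lc_{g, |\Delta_g| \cdot n}$, and the tame Deligne--Mumford structure is automatic from Proposition~\ref{prop:DMstack}.
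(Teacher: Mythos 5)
Your proposal is correct and follows essentially the same route as the paper: isolate $\Lc_{g,|\Delta_g|\cdot n}$ as the clopen piece of $\Hom_{>0}(\Pb^1,\Hc_{2g}[2g-1])$ where $\deg f^*\Oc_{\Pcv}(1)=n$, then match this degree to the discriminant degree via the weighted homogeneity of $\Delta_g$ and the interpretation of the $a_i$ as sections of $\Oc_{\Pb^1}(in)$. Your extra care in treating $\Delta_g$ as a section of $\Oc_{\Pc(\vec\lambda_g)}(4g(2g+1))$ and explicitly invoking naturality of discriminant formation under pullback is a cleaner way to phrase the paper's remark that ``it is straightforward to check,'' but it is the same underlying computation.
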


    \begin{proof}
        Since $\deg f^*\Oc_{\Pc(\vec{\lambda}_g)}(1)=n$ is an open condition, $\Hom_n(\Pb^1,\Hc_{2g}[2g-1])$ is an open substack of $\Hom(\Pb^{1},\Hc_{2g}[2g-1])$. Now, it suffices to show that $\deg f=n$ (i.e., $\deg f^*\Oc_{\Pc(\vec{\lambda_g})}(1)=n$) if and only if the discriminant degree of the corresponding quasi--admissible fibration is $4g(2g+1)n$. Note that $\deg f=n$ if and only if the quasi--admissible fibration is given by the Weierstrass equation
        \[
        y^2=x^{2g+1}+a_4x^{2g-1}+a_6x^{2g-2}+\dotsb+a_{4g+2}
        \]
        where $a_i$'s are sections of $\Oc(in)$, since $a_i$'s represent the coordinates of $\Pc(4,6,\dotsc,4g+2)$. Then by Definition~\ref{Hyp_Disc}, it is straightforward to check that $\Delta_g$ has the discriminant degree $4g(2g+1)n$. 
    \end{proof}
    
    Now we are ready to count the number $|\Lc_{g,|\Delta_g|\cdot n}(\Fb_q)/\sim|$ of $\Fb_q$-isomorphism classes of quasi--admissible genus $g$ fibrations over $\Pb^1_{\Fb_q}$:
    
    \begin{proof}[Proof of Theorem~\ref{thm:low_genus_count}]
    By Proposition~\ref{prop:algo_ptcount}, for a fixed $g$, it suffices to understand when a connected component $\Hom_n(\Pb^1,\Pc((\vec\lambda_g)_r))$ (indexed by $r$) of $\Ic(\Lc_{g,|\Delta_g|\cdot n})$ is nonempty for $\vec\lambda_g = (4,6,\dotsc,4g+2)$; this is equivalent to finding $r$ such that the subtuple $(\vec\lambda_g)_r$ has length at least two. Table~\ref{tab:list_subtuple_qadm} describes all such possible $r$'s for given low values of $g=2,3,4$:
    
    \begin{table}
        \centering
        \begin{tabular}{|c|c|c|c|c|}
            \hline
            $r$ & $\varphi(r)$ & $g=2$ & $g=3$ & $g=4$\\
            \hline
            $1,2$ & 1 & $(4,6,8,10)$ & $(4,6,8,10,12,14)$ & $(4,6,8,10,12,14,16,18)$\\
            $3$ & 2 & -- & $(6,12)$ & $(6,12,18)$\\
            $4$ & 2 & $(4,8)$ & $(4,8,12)$ & $(4,8,12,16)$\\
            $6$ & 2 & -- & $(6,12)$ & $(6,12,18)$\\
            $8$ & 4 & -- & -- & $(8,16)$\\
            \hline
        \end{tabular}
        \caption{Table of all tuples $(\vec\lambda_g)_r$ of length at least two for low genus $g=2,3,4$. Entry has -- when $(\vec\lambda_g)_r$ has length zero or one.}
        \label{tab:list_subtuple_qadm}
    \end{table}
    
    Summing the weighted point counts of $\Hom$ stacks from Proposition~\ref{prop:motivecount_Hom} into Proposition~\ref{prop:algo_ptcount} gives the desired formula, where we use the division function $\delta(r,q-1)$ (defined in Theorem~\ref{thm:low_genus_count}) to indicate that we take the contribution of $\#_q(\Hom_n(\Pb^1,\Pc(\vec\lambda_r)))$ whenever $r \in R$ (i.e., $r$ divides $q-1$).
    
    The same method directly applies when $g \ge 5$.
    \end{proof}

    \section{Counting odd--degree hyperelliptic curves over $\mathbb{P}_{\mathbb{F}_q}^{1}$ by $\Delta_{g}$}\label{sec:globfield}

    \par We first recall the definition of a \textit{global field}. Let $S$ be the set of places of a field $K$ and $|\cdot|_v$ be the normalized absolute value for each place $v \in S$. %(i.e., the set of equivalence classes of nontrivial absolute values) 

    \begin{defn}\label{def:globfield}
    A field $K$ is a global field if all completions $K_v$ of $K$ at each place $v \in S$ is a local field, and $K$ satisfies the product formula $\prod\limits_{v} |\alpha|_v = 1$ over all places $v \in S$ for all $\alpha \in K^*$ . 
    \end{defn}

    \par And we have the fundamental Artin-Whaples Theorem proven in 1945 \cite{AW, AW2} which emphasized the close analogy between the theory of algebraic number fields and the theory of function fields of algebraic curves over finite fields. The axiomatic method used in these papers unified the two global fields from a valuation theoretic perspective by clarifying the role of the product formula.

    \begin{thm}[Artin-Whaples]\label{thm:Artin_Whaples}
    Every global field is a finite extension of $\Qb$ or $\Fb_q(t)$ .
    \end{thm}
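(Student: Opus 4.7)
The plan is to split by $\mathrm{char}(K)$ and, in each case, identify the prime global subfield of $K$ together with a distinguished completion that controls the global degree.

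\textbf{Characteristic zero.} Since $\Qb\subset K$, Ostrowski's theorem identifies the restriction of each $|\cdot|_v$ to $\Qb$ as either the archimedean absolute value or a $p$-adic one, and the product formula on $K$ descends with multiplicities to the standard product formula on $\Qb$. This forces at least one archimedean $v\in S$ to exist, and its completion $K_v$ is a local field containing $\Rb$, hence $K_v\in\{\Rb,\Cb\}$. Since $K$ embeds densely in $K_v$, every $\alpha\in K$ is algebraic of degree at most $2$ over $\Rb$ and therefore algebraic over $\Qb$, so $K\subset\overline{\Qb}$. To promote algebraicity to a finite degree, I would pick any non-archimedean $v_0\in S$ and invoke the local-field hypothesis on $K_{v_0}$ together with the local-global identity $[K:\Qb]=\sum_{w\mid p}[K_w:\Qb_p]$; bounding the local degrees and the number of places above $p$ yields $[K:\Qb]<\infty$.

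\textbf{Characteristic $p$.} Here $\Fb_p\subset K$, every place is non-archimedean, and each completion satisfies $K_v\cong \Fb_{q_v}((\pi_v))$. Since $K$ is not a finite field (it admits a non-trivial valuation by the product formula applied to any $\alpha\in K^*$ with $|\alpha|\neq 1$ somewhere), fix a non-trivial place $v_0\in S$ and pick $t\in K$ with $|t|_{v_0}<1$. Such $t$ is necessarily transcendental over $\Fb_p$, because every element of $\overline{\Fb}_p$ has trivial absolute value at every place of $K$. Thus $\Fb_p(t)\hookrightarrow K$, and one verifies that the places of $\Fb_p(t)$ induced by restriction from $S$ are precisely the standard geometric ones, indexed by closed points of $\Pb^1_{\Fb_p}$. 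The inclusion $\Fb_p(t)_{v_0}\subset K_{v_0}$ is then a finite extension of local fields, and the same local-global degree comparison as in the number field case bounds $[K:\Fb_p(t)]$, hence $[K:\Fb_q(t)]<\infty$ for $\Fb_q$ the algebraic closure of $\Fb_p$ inside $K$.

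The main obstacle in both cases is the final degree-bounding step: transferring finiteness of a single local extension $K_{v_0}/F_{v_0}$, where $F$ is the prime global subfield, to finiteness of the global extension $K/F$. This is precisely where the product formula becomes indispensable, since it synchronizes valuations across all places simultaneously so that a bound on one completion propagates to a global bound; proving this transfer rigorously (and verifying that the induced places on $F$ really exhaust the standard set of places) is the technical core of the Artin-Whaples argument.
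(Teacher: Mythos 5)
The paper does not prove this theorem; it cites \cite{AW, AW2} and uses it as a black box. So there is nothing to compare against, and I will evaluate your argument on its own.

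Your characteristic-zero branch contains a decisive error. You assert that because $K$ embeds densely in $K_v \in \{\Rb,\Cb\}$, ``every $\alpha\in K$ is algebraic of degree at most $2$ over $\Rb$ and therefore algebraic over $\Qb$.'' The first clause is vacuously true (every complex number satisfies a monic quadratic over $\Rb$), and the second clause does not follow from it: $\pi$ has degree $1$ over $\Rb$ and is transcendental over $\Qb$. Density in an archimedean completion gives no algebraicity constraint over $\Qb$ whatsoever, so the conclusion $K\subset\overline{\Qb}$ is unsupported. The subsequent step compounds this: the identity $[K:\Qb]=\sum_{w\mid p}[K_w:\Qb_p]$ is a theorem \emph{about} finite extensions of $\Qb$; invoking it to \emph{prove} $[K:\Qb]<\infty$ is circular. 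The same circularity appears verbatim at the end of the characteristic-$p$ branch, where the ``same local-global degree comparison'' is cited without the extension yet being known to be finite, and the intermediate claim that the places of $\Fb_p(t)$ induced by restriction are exactly the geometric ones is asserted rather than derived (and is itself part of what must be established, since the product formula could a priori involve nonstandard multiplicities).

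More fundamentally, the route you are taking is not the one that works. The Artin--Whaples argument does not reduce to a single completion and then ``transfer'' a local degree bound; the product formula is used globally in the form of a counting/approximation argument: one shows that the set of elements with bounded ``height'' (product of max of local absolute values) is finite, which plays the role of Riemann--Roch/Minkowski and yields finite generation and finite degree over the prime global subfield simultaneously. Your closing paragraph concedes that the ``final degree-bounding step'' and the ``verification that the induced places exhaust the standard set'' are the technical core and remain undone -- but that core is essentially the entire content of the theorem. As written, what you have is a plausible outline of the two cases plus a false step in characteristic $0$, not a proof.
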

    
    \par Focusing upon the global function fields $\Fb_q(t)$, we need to fix an affine chart $\A^1_{\Fb_q} \subset \Pb^1_{\Fb_q}$ and its corresponding ring of functions $\Fb_q[t]$ interpreted as the ring of integers of the field of fractions $\Fb_q(t)$ of $\Pb^1_{\Fb_q}$. This is necessary since $\Fb_q[t]$ could come from any affine chart of $\Pb^1_{\Fb_q}$, whereas the ring of integers $\Oc_K$ for the number field $K$ is canonically determined. We denote $\infty \in \Pb^1_{\Fb_q}$ to be the unique point not in the chosen affine chart.

    \medskip

    \par Note that for a maximal ideal $\pf$ in $\Oc_K$, the residue field $\Oc_K / \pf$ is finite. One could think of $\pf$ as a point in $\Spec~\Oc_K$ and define the \textit{height} of a point $\pf$.

    %for both of our global fields

    \begin{defn}\label{def:height_pt}
        Define the height of a point $\pf$ to be $ht(\pf) := |\Oc_K / \pf|$ the cardinality of the residue field $\Oc_K / \pf$. 
    \end{defn}

    \par We recall the notion of \textit{bad reduction} \& \textit{good reduction}:
    
    \begin{defn}\label{def:reductions}
    Let $C$ be an odd--degree hyperelliptic genus $g$ curve over $K$ given by the odd--degree Weierstrass equation 
    \[
    y^2=x^{2g+1}+a_4x^{2g-1}+a_6x^{2g-2}+\dotsb+a_{4g+2},
    \]
    with $a_{2i+2} \in \Oc_K$ for every $1 \le i \le 2g$. Then $C$ has bad reduction at $\pf$ if the fiber $C_{\pf}$ over $\pf$ is a singular curve of degree $2g+1$. The prime $\pf$ is said to be of good reduction if $C_{\pf}$ is a smooth hyperelliptic genus $g$ curve.
    \end{defn}
    
    \par Consider the case when $K=\Fb_q(t)$, and a quasi--admissible model $f:X \rightarrow \Pb^1_{\Fb_q}$ (a quasi--admissible fibration with smooth geometric generic fiber). For simplicity, assume that $X$ does not have a singular fiber over $\infty \in \Pb^1_{\Fb_q}$. Note that the primes $\pf_i$ of bad reductions of $f$ are precisely points of the discriminant divisor $\Delta_g(X)=\sum k_i \cdot \pf_i$, as the fiber $X_{\pf_i}$ is singular over $\Delta_{g}(X)$. When $K = \Fb_q(t)$ the global function field, we have $\Delta_{g}(X) \in H^0(\Pb^{1}, \Oc(4g(2g+1)n))$ by the proof of Proposition~\ref{prop:qadm_fib}. We can define the height of $\Delta_g(X)$ as follows: 
    
    \begin{defn}\label{def:height_Disc}
        The height $ht(\Delta_{g}(X))$ of a discriminant divisor $\Delta_{g}(X)$ in $\Pb^1_{\Fb_q}$ is $q^{\deg \Delta_{g}(X)}$. As a convention, if a divisor $\Delta_{g}(X)$ is given as a zero section of any line bundle, then set $ht(\Delta_{g}(X))=\infty$.
    \end{defn}
    In general, the height of a hyperelliptic discriminant $\Delta_{g}(X)$ of any $X$ (without nonsingular fiber assumption over $\infty$) is defined as $q^{|\Delta_{g}(X)|}$ where $\deg(\Delta_{g}(X)):=|\Delta_{g}(X)|$ is equal to $4g(2g+1)n$.

    %are precisely the points of the discriminant divisor $\Delta_{g}$, as the fiber $X_{\pf}$ is singular over $\Delta_{g}$. When $K = \Fb_q(t)$ the global function field, we have $\Delta_{g}(X) \in H^0(\Pb^{1}, \Oc(4g(2g+1)n))$ by the proof of Proposition~\ref{prop:qadm_fib}. It has the following factorization for pairwise distinct maximal ideals $\pf_i \subset \Fb_q[t]$ and $\alpha \in \Fb^{*}_q$ over the affine chart:
    %\[\Delta_{g}(X) = 2^{4g} \cdot \text{Disc}(x^{2g+1} + a_{4}x^{2g-1} + a_{6}x^{2g-2} + \cdots + a_{4g+2}) = \alpha \prod\limits_{i=1}^{\mu} ht(\pf_i)^{k_i}\]

    %\medskip

    %\par This formula implies that given a general odd--degree hyperelliptic genus $g$ fibration over the number field $K$ has $4g(2g+1)n$ singular fibers (in fact, nodal). Note that $ht(\Delta_g(X))$ in this case should be thought of as the product of the cardinality of the residue fields (upto multiplicity) over the zero locus of $\Delta_g(X)$.

    \medskip
    
    \par As the hyperelliptic discriminant divisor $\Delta_{g}(X)$ is an invariant of a quasi--admissible model $f : X \to \Pb^1$, we count the number of $\Fb_q$--isomorphism classes of quasi--admissible hyperelliptic genus $g$ fibrations on the function field $\Fb_q(t)$ by the bounded height of $\Delta_{g}(X)$:

    \[\Zc_{g, \Fb_q(t)}(\Bc) := |\{\text{Quasi--admissible odd--degree hyperelliptic curves over } \Pb^1_{\Fb_q} \text{ with } 0<ht(\Delta_{g}) \le \Bc\}|\] 

    %\[\Zc_{g, \Fb_q(t)}(\Bc) := |\{\text{Quasi--admissible odd hyperelliptic genus } g \text{ curves over } \Pb^{1}_{\Fb_q} \text{ with } 0<ht(\Delta_{g}(X)) \le \Bc\}|\] 

    \medskip

    \par We now prove the sharp enumerations on $\Zc_{g,\Fb_q(t)}(\Bc)$ 

    %\[ht(\Delta_{g}(X)) = \prod\limits_{i=1}^{\mu} |\Fb_q[t]/\pf_i|^{k_i} = (q^{e_1})^{k_1} \cdots (q^{e_i})^{k_i} \cdots (q^{e_{\mu}})^{k_\mu} = q^{e_1k_1 + \cdots + e_{\mu}k_{\mu}} = q^{|\Delta_{g}(X)|}\]
    %

    \begin{thm} [Sharp enumeration on $\Zc_{g, \Fb_q(t)}(\Bc)$] \label{thm:g_sharper_asymp_full}
        If $\text{char} (\Fb_q) > 2g+1$, then the function $\Zc_{g, \Fb_q(t)}(\Bc)$, which counts the number of quasi--admissible odd--degree hyperelliptic genus $g \ge 2$ curves $X$ over $\Pb^1_{\Fb_q}$ ordered by $0< ht(\Delta_{g}(X)) = q^{4g(2g+1)n} \le \Bc$, satisfies:
        \begingroup
        \allowdisplaybreaks
        \begin{align*}
         \Zc_{2, \Fb_q(t)}(\Bc) &\le 2 \frac{(q^{31} + q^{30} + q^{29} - q^{27} - q^{26} - q^{25})}{(q^{28}-1)} \cdot \left( \Bc^{\frac{7}{10}} - 1 \right) + 2\delta(4,q-1) \frac{(q^{13} - q^{11})}{(q^{12}-1)} \cdot \left( \Bc^{\frac{3}{10}} - 1 \right)\\\\
         \Zc_{3, \Fb_q(t)}(\Bc) &\le 2 \frac{(q^{59} + q^{58} + \dotsb + q^{55} - q^{53} - q^{52} - \dotsb - q^{49})}{(q^{54}-1)} \cdot \left( \Bc^{\frac{9}{14}} - 1 \right) \\
         &\phantom{\le}\text{ }+ 2\delta(4,q-1) \frac{(q^{26} + q^{25} - q^{23} - q^{22})}{(q^{24}-1)} \cdot \left( \Bc^{\frac{2}{7}} - 1 \right) + 4\delta(6,q-1) \frac{({q^{19} - q^{17}})}{(q^{18}-1)} \cdot \left( \Bc^{\frac{3}{14}} - 1 \right) \\\\
         \Zc_{4, \Fb_q(t)}(\Bc) &\le 2 \frac{(q^{95} + q^{94} + \dotsb + q^{89} - q^{87} - q^{52} - \dotsb - q^{81})}{(q^{88}-1)} \cdot \left( \Bc^{\frac{11}{18}} - 1 \right) \\
         &\phantom{\le}\text{ }+ 2\delta(4,q-1) \frac{(q^{43} + q^{42} + q^{41} - q^{39} - q^{38} - q^{37})}{(q^{40}-1)} \cdot \left( \Bc^{\frac{5}{18}} - 1 \right) \\
         &\phantom{\le}\text{ }+ 4\delta(6,q-1) \frac{(q^{38} + q^{37} - q^{35} - q^{34})}{(q^{36}-1)} \cdot \left( \Bc^{\frac{1}{4}} - 1 \right) + 4\delta(8,q-1) \frac{({q^{25} - q^{23}})}{(q^{24}-1)} \cdot \left( \Bc^{\frac{1}{6}} - 1 \right)
        \end{align*}
        \endgroup
        which is an equality when $\Bc = q^{4g(2g+1)n}$ with $n \in \mathbb{Z}_{\geq 1}$ implying that each upper bound is the sharp enumeration, i.e., the upper bound is equal to the function at infinitely many values of $\Bc \in \Zb_{\geq 1}$. 
    \end{thm}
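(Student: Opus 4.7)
The plan is to reduce to the count over quasi-admissible fibrations using Theorem~\ref{thm:hypell_surf_and_qadm_fib}, and then to evaluate a finite geometric series built from the closed-form counts of Theorem~\ref{thm:low_genus_count}. First, the fully faithful functor $\mathcal{F}: \mathcal{S}_g(\Fb_q) \hookrightarrow \Lc_g(\Fb_q)$ yields inequality~\eqref{eq:stable_Qadm}, so it suffices to bound $\Zc'_{g,\Fb_q(t)}(\Bc)$. By Proposition~\ref{prop:qadm_fib}, each quasi-admissible fibration of hyperelliptic discriminant degree $4g(2g+1)n$ is represented by an $\Fb_q$-isomorphism class in $\Lc_{g,|\Delta_g|\cdot n}(\Fb_q)$, and the bounded height condition $q^{4g(2g+1)n} \le \Bc$ translates to $n \le N := \lfloor \log_q(\Bc)/(4g(2g+1)) \rfloor$. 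Hence I obtain the decomposition
\[
\Zc'_{g,\Fb_q(t)}(\Bc) = \sum_{n=1}^{N} \bigl|\Lc_{g,|\Delta_g|\cdot n}(\Fb_q)/\sim\bigr|.
\]

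Next, I substitute the explicit formulas for $|\Lc_{g,|\Delta_g|\cdot n}(\Fb_q)/\sim|$ from Theorem~\ref{thm:low_genus_count}. Each summand is a finite linear combination of terms $a_c(q)\cdot q^{cn}$, where the exponents $c$ arise from the weighted sub-projective stacks $\Pc(\vec{\lambda}_{I_g})$ indexing the inertia decomposition of Proposition~\ref{prop:Hom_inertia_decomp}, with each $a_c(q)$ carrying the appropriate multiplicity and $\delta(r,q-1)$ indicator from Proposition~\ref{prop:algo_ptcount}. Summing $n$ from $1$ to $N$ yields the finite geometric series
\[
\sum_{n=1}^{N} q^{cn} \;=\; \frac{q^c}{q^c-1}\bigl(q^{cN}-1\bigr) \;\le\; \frac{q^c}{q^c-1}\Bigl(\Bc^{\,c/(4g(2g+1))}-1\Bigr),
\]
where the inequality uses $q^{4g(2g+1)N} \le \Bc$ to conclude $q^{cN} = \bigl(q^{4g(2g+1)N}\bigr)^{c/(4g(2g+1))} \le \Bc^{c/(4g(2g+1))}$.

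It then remains to record, for each $g$, the relevant exponents. For $g=2$ the pairs $(c,|\Delta_g|)$ are $(28,40)$ and $(12,40)$, giving heights $\Bc^{7/10}$ and $\Bc^{3/10}$; for $g=3$ they are $(54,84)$, $(24,84)$, $(18,84)$, giving $\Bc^{9/14}$, $\Bc^{2/7}$, $\Bc^{3/14}$; and for $g=4$ they are $(88,144)$, $(40,144)$, $(36,144)$, $(24,144)$, giving $\Bc^{11/18}$, $\Bc^{5/18}$, $\Bc^{1/4}$, $\Bc^{1/6}$. Combining each $a_c(q)$ with the factor $q^c/(q^c-1)$ and simplifying reproduces the rational-function coefficients displayed in the statement. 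The only obstacle beyond invoking Theorem~\ref{thm:hypell_surf_and_qadm_fib} and Theorem~\ref{thm:low_genus_count} is clerical: verifying that every inertia component contributes its correct $\delta(r,q-1)$ factor and that the simplified numerators of the form $q^c \cdot a_c(q)/(q^c-1)$ match those shown term by term.
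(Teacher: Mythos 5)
Your proof is correct and follows exactly the same route as the paper: reduce $\Zc_{g,\Fb_q(t)}(\Bc)$ to $\Zc'_{g,\Fb_q(t)}(\Bc)$ via Theorem~\ref{thm:hypell_surf_and_qadm_fib}, expand $\Zc'$ as a sum of the non-weighted point counts from Theorem~\ref{thm:low_genus_count}, and bound each geometric series using $q^{4g(2g+1)N}\le\Bc$. Your formulation of the geometric-series bound, bounding $q^{cN}$ directly by $\Bc^{c/(4g(2g+1))}$, is slightly cleaner notationally than the paper's but is the same estimate.
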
 

    \begin{proof}[Proof of Main Theorem~\ref{Mthm:g_sharper_asymp} for $g=2$]
        
        \par Knowing the number of $\Fb_q$-isomorphism classes of quasi--admissible genus 2 fibrations of discriminant degree $40n$ over $\Fb_q$ is \[|\Lc_{2,40n}(\Fb_q)/\sim| = 2 q^{28n} \cdot (q^{3} + q^{2} + q^{1} - q^{-1} - q^{-2} - q^{-3}) + 2 \delta(4,q-1) q^{12n} \cdot (q^{1}-q^{-1})\] 
        by Theorem~\ref{thm:low_genus_count}, we explicitly compute the upper bound for $\Zc_{2, \Fb_q(t)}(\Bc)$ as the following,
        
        \begingroup
        \allowdisplaybreaks
        \begin{align*}
        \Zc_{2, \Fb_q(t)}(\Bc) & = \sum \limits_{n=1}^{\left \lfloor \frac{log_q \Bc}{40} \right \rfloor} |\Lc_{2, 40n}(\Fb_q)/\sim| \\
        & = \sum \limits_{n=1}^{\left \lfloor \frac{log_q \Bc}{40} \right \rfloor} 2 \cdot q^{28n} \cdot (q^{3} + q^{2} + q^{1} - q^{-1} - q^{-2} - q^{-3}) + 2\delta(4,q-1)  q^{12n} \cdot (q^{1}-q^{-1}) \\
        & = 2 \cdot (q^{3} + q^{2} + q^{1} - q^{-1} - q^{-2} - q^{-3}) \sum \limits_{n=1}^{\left\lfloor\frac{log_q \Bc}{40}\right\rfloor} q^{28n} + 2\delta(4,q-1) ({q^{1} - q^{-1}}) \sum \limits_{n=1}^{\left\lfloor\frac{log_q \Bc}{40}\right\rfloor} q^{12n}\\
        & \le 2 \cdot (q^{3} + q^{2} + q^{1} - q^{-1} - q^{-2} - q^{-3}) \left( q^{28} + \cdots + q^{28 \cdot (\frac{log_q \Bc}{40})}\right) \\
        & \phantom{=  }\; + 2\delta(4,q-1) ({q^{1} - q^{-1}}) \left( q^{12} + \cdots + q^{12 \cdot (\frac{log_q \Bc}{40})}\right) \\
        & = 2 \cdot (q^{3} + q^{2} + q^{1} - q^{-1} - q^{-2} - q^{-3}) \left( \frac{q^{28} \cdot ( \Bc^{\frac{7}{10}} - 1) }{(q^{28}-1)} \right) \\
        &\phantom{= }\; + 2 \delta(4,q-1) ({q^{1} - q^{-1}}) \left( \frac{q^{12} \cdot ( \Bc^{\frac{3}{10}} - 1) }{(q^{12}-1)} \right)  \\
        & = 2 \frac{(q^{31} + q^{30} + q^{29} - q^{27} - q^{26} - q^{25})}{(q^{28}-1)} \cdot \left( \Bc^{\frac{7}{10}} - 1 \right) + 2 \delta(4,q-1) \frac{(q^{13} - q^{11})}{(q^{12}-1)} \cdot \left( \Bc^{\frac{3}{10}} - 1 \right)
        \end{align*}
        \endgroup

        On the fourth line of the equations above, inequality becomes an equality if and only if $n:= \frac{log_q \Bc}{40} \in \N$, i.e., $\Bc=q^{40n}$ with $n \in \N$. This implies that the acquired upper bound on $\Zc_{2, \Fb_q(t)}(\Bc)$ is the sharp enumeration, i.e., the upper bound is equal to the function at infinitely many values of $\Bc \in \Zb_{\geq 1}$.
    \end{proof}

    \par As there are non-hyperelliptic curves for higher genus $g \ge 3$ curves, $\Zc_{g \ge 3, \Fb_q(t)}(\Bc)$ counts the quasi--admissible hyperelliptic genus $g \ge 3$ curves over $\Pb^{1}_{\Fb_q}$ only. We determine $\Zc_{3, \Fb_q(t)}(\Bc)$ explicitly thereby counting the quasi--admissible hyperelliptic genus 3 curves over $\Pb^{1}_{\Fb_q}$.

    \begin{proof}[Proof of Main Theorem~\ref{Mthm:g_sharper_asymp} for $g=3$]
        Knowing the number of $\Fb_q$-isomorphism classes of quasi--admissible hyperelliptic genus 3 fibrations of discriminant degree $84n$ over $\Fb_q$ is $\left|\Lc_{3,84n}(\Fb_q)/\sim\right| = 2 \cdot q^{54n} \cdot (q^{5} +  \dotsb + q^{1} - q^{-1}  - \dotsb - q^{-5}) + 2\delta(4,q-1) q^{24n} \cdot (q^{2} + q^{1} - q^{-1} - q^{-2}) + 4\delta(6,q-1) q^{18n} \cdot (q^{1} - q^{-1})$ by Theorem~\ref{thm:low_genus_count}, we explicitly compute the upper bound for $\Zc_{3, \Fb_q(t)}(\Bc)$ similarly as genus 2 case.
    \end{proof}

    \par We conclude with $\Zc_{4, \Fb_q(t)}(\Bc)$ counting the quasi--admissible hyperelliptic genus 4 curves over $\Pb^{1}_{\Fb_q}$.

    \begin{proof}[Proof of Main Theorem~\ref{Mthm:g_sharper_asymp} for $g=4$]
        Knowing the number of $\Fb_q$-isomorphism classes of quasi--admissible hyperelliptic genus 4 fibrations of discriminant degree $144n$ over $\Fb_q$ is $\left|\Lc_{4,144n}(\Fb_q)/\sim\right| = 2 \cdot q^{88n} \cdot (q^{7} + \dotsb + q^{1} - q^{-1} - \dotsb - q^{-7}) + 2 \delta(4,q-1) q^{40n} \cdot (q^{3} + q^{2} + q^{1} - q^{-1} - q^{-2} - q^{-3}) + 4 \delta(6,q-1) q^{36n} \cdot (q^{2} + q^{1} - q^{-1} - q^{-2}) + 4 \delta(8,q-1) q^{24n} \cdot (q^{1} - q^{-1})$ by Theorem~\ref{thm:low_genus_count}, we explicitly compute the upper bound for $\Zc_{4, \Fb_q(t)}(\Bc)$ similarly as genus 2 case.
    \end{proof}

    \par The computation of sharp enumerations with precise lower order terms for the higher genus cases $\Zc_{g, \Fb_q(t)}(\Bc)$ can be done similarly after working out $\left|\Lc_{g, |\Delta_{g}| \cdot n}(\Fb_q)/\sim\right|$ by the Theorem~\ref{thm:low_genus_count}. While the lower order terms vary, the order of the leading term can be found by the following. Then, we define the similar counting function $\Zc'_{g, \Fb_q(t)}(\Bc)$ as
    %for a positive real number $\Bc$, 
    \[\Zc'_{g, \Fb_q(t)}(\Bc) := |\{\text{Stable odd--degree hyperelliptic curves over } \Pb^1_{\Fb_q} \text{ with } 0<ht(\Delta_{g}) \le \Bc\}|\] 

    %\[\Zc'_{g, \Fb_q(t)}(\Bc) := |\{\text{Stable odd hyperelliptic } g \ge 2 \text{ curves over } \Pb^1_{\Fb_q} \text{ with } 0<ht(\Delta_{g}(X)) \le \Bc\}|\] 
    
    Below, we prove the following closed-form upper bound for $\Zc'_{g, \Fb_q(t)}(\Bc)$:

    \begin{prop}[Estimate on $\Zc'_{g, \Fb_q(t)}(\Bc)$] \label{prop:curve_count} 
        If $\text{char} (\Fb_q) > 2g+1$, then the function $\Zc'_{g, \Fb_q(t)}(\Bc)$, which counts the number of stable odd--degree hyperelliptic curves $X$ with a marked Weierstrass point over $\Pb^1_{\Fb_q}$ ordered by $0< ht(\Delta_{g}(X)) = q^{4g(2g+1)n} \le \Bc$, satisfies:
        \[
            \Zc'_{g, \Fb_q(t)}(\Bc) \le 2g \cdot \frac{q^{4g(g+1)+1} \cdot (q^{2g-1}-1)(q^{2g}-1)}{(q-1)(q^{2g(2g+3)}-1)} \cdot \left( \Bc^{\frac{2g+3}{4g+2}}-1 \right) \]
    \end{prop}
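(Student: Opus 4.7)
The plan is to combine the reduction in inequality~\eqref{eq:stable_Qadm} with a stratification by discriminant degree and then apply the uniform bound $|\cdot/\sim| \le \omega \cdot \#_q(\cdot)$ from Corollary~\ref{cor:maincount} to each stratum. I would first write
\[
\Zc_{g,\Fb_q(t)}(\Bc) \;\le\; \Zc'_{g,\Fb_q(t)}(\Bc) \;=\; \sum_{n=1}^{\left\lfloor \log_q\Bc/(4g(2g+1))\right\rfloor} \left|\Lc_{g,\,|\Delta_g|\cdot n}(\Fb_q)/\sim\right|,
\]
since by Proposition~\ref{prop:qadm_fib} every quasi-admissible genus $g$ fibration contributing to $\Zc'_{g,\Fb_q(t)}(\Bc)$ has hyperelliptic discriminant height $q^{4g(2g+1)n}$ for a unique $n \in \N$.

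Next, for the weight tuple $\vec{\lambda_g}=(4,6,8,\dotsc,4g+2)$ the maximum pairwise gcd equals $\omega = 2g$ (realized by $\gcd(2g,4g)=2g$, and any larger $d$ would require two distinct multiples of $d$ among the entries, which is impossible when $2d > 4g+2$, i.e.\ $d > 2g$). Therefore, combining the upper half of the inequality in Corollary~\ref{cor:maincount} with the explicit formula
\[
\#_q(\Lc_{g,\,|\Delta_g|\cdot n}) \;=\; \frac{q^{2g(2g+3)n-(2g-1)}(q^{2g}-1)(q^{2g-1}-1)}{q-1},
\]
obtained from Corollary~\ref{cor:pointcount} after factoring the alternating Laurent sum as $q^{-(2g-1)}(q^{2g}-1)(q^{2g-1}-1)/(q-1)$, yields $\left|\Lc_{g,\,|\Delta_g|\cdot n}(\Fb_q)/\sim\right| \le 2g\cdot \#_q(\Lc_{g,\,|\Delta_g|\cdot n})$ for every $n$.

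Finally, I would sum the resulting geometric series
\[
\sum_{n=1}^{N} q^{2g(2g+3)n} \;=\; q^{2g(2g+3)}\cdot\frac{q^{2g(2g+3)N}-1}{q^{2g(2g+3)}-1},
\]
use $N \le \log_q\Bc/(4g(2g+1))$ to bound $q^{2g(2g+3)N} \le \Bc^{(2g+3)/(4g+2)}$, and collect the exponent $2g(2g+3)-(2g-1)=4g(g+1)+1$ to recover the stated upper bound. No single step is subtle: the main content lies in verifying $\omega = 2g$ for $\vec{\lambda_g}$ and in the exponent bookkeeping; all the underlying geometric input is already supplied by Theorem~\ref{thm:hypell_surf_and_qadm_fib}, Corollary~\ref{cor:maincount}, and Corollary~\ref{cor:pointcount}.
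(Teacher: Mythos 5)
Your argument is correct and follows essentially the same route as the paper: reduce to quasi-admissible fibrations via inequality~\eqref{eq:stable_Qadm}, stratify by discriminant degree, apply the bound $\left|\Lc_{g,|\Delta_g|\cdot n}(\Fb_q)/\sim\right| \le 2g\cdot\#_q(\Lc_{g,|\Delta_g|\cdot n})$ from Corollary~\ref{cor:maincount} with $\omega=2g$, and sum the resulting geometric series with the bound $q^{2g(2g+3)N}\le\Bc^{(2g+3)/(4g+2)}$. Your explicit verification that $\omega=2g$ for $\vec{\lambda_g}=(4,6,\dotsc,4g+2)$ (realized by $\gcd(2g,4g)$, and no $d>2g$ can have two distinct multiples in the range) is a welcome detail that the paper merely asserts.
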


    \begin{proof}%[Proof of Proposition~\ref{prop:curve_count}]
         
        \par Note that the automorphism group of minimum order of $\varphi_g$ is the generic stabilizer group $\mu_{\delta} = \mu_2$ of $\Pc(\vec{\lambda_g})$ and the automorphism group of maximum order of $\varphi_g$ is $\mu_{\omega} = \mu_{2g}$ as $2g$ is the maximum value of GCD for all possible pairs among $\vec{\lambda_g} = (4,6,8,\dotsc,4g+2)$. By Corollary~\ref{cor:maincount} we know that the number of $\Fb_q$-isomorphism classes of quasi--admissible hyperelliptic genus $g$ fibrations of hyperelliptic discriminant degree $|\Delta_{g}| \cdot n=4g(2g+1)n$ over $\Fb_q$ is $2 \cdot \#_q\left(\Lc_{g, |\Delta_{g}| \cdot n}\right) \le \left|\Lc_{g, |\Delta_{g}| \cdot n}(\Fb_q)/\sim\right| \le 2g \cdot \#_q\left(\Lc_{g, |\Delta_{g}| \cdot n}\right)$, we can explicitly compute the upper bound for $\Zc'_{g, \Fb_q(t)}(\Bc)$ as the following,
        
        \begingroup
        \allowdisplaybreaks
        \begin{align*}
        \Zc'_{g, \Fb_q(t)}(\Bc) & = \sum \limits_{n=1}^{\left \lfloor \frac{log_q \Bc}{4g(2g+1)} \right \rfloor} |\Lc_{g, |\Delta_{g}| \cdot n}(\Fb_q)/\sim| \\
        & \le \sum \limits_{n=1}^{\left \lfloor \frac{log_q \Bc}{4g(2g+1)} \right \rfloor} 2g \cdot q^{2g(2g+3)n} \cdot \frac{(q^{2g-1}-1)(q-q^{-2g+1})}{q-1} \\
        & = 2g \cdot \frac{(q^{2g-1}-1)(q-q^{-2g+1})}{q-1} \sum \limits_{n=1}^{\left\lfloor\frac{log_q \Bc}{4g(2g+1)}\right\rfloor} q^{2g(2g+3)n} \\
        & = 2g \cdot \frac{(q^{2g-1}-1)(q-q^{-2g+1})}{q-1} \left( q^{2g(2g+3)} + \cdots + q^{2g(2g+3) \cdot (\frac{log_q \Bc}{4g(2g+1)})}\right)\\
        & = 2g \cdot \frac{(q^{2g-1}-1)(q-q^{-2g+1})}{q-1} \cdot \left( \frac{q^{2g(2g+3)} \cdot (\Bc^{\frac{2g+3}{4g+2}}-1)}{q^{2g(2g+3)}-1} \right)\\
        & = 2g \cdot \frac{q^{4g(g+1)+1} \cdot (q^{2g-1}-1)(q^{2g}-1)}{(q-1)(q^{2g(2g+3)}-1)} \cdot \left( \Bc^{\frac{2g+3}{4g+2}}-1 \right)
        \end{align*}
        \endgroup
        
        This implies that this upper bound has the leading term of order $\Oc_q\left( \Bc^{\frac{2g+3}{4g+2}}\right)$ where $\mathcal{O}_q$-constant is an explicit rational function of $q$ with the corresponding lower order terms for each genus $g \ge 2$.
        %The lower bound can be computed similarly by substituting $2g$ with $2$.
    \end{proof}

    \par For higher genus $g \ge 3$, we count the Jacobians of hyperelliptic curves.

    \begin{thm} [Estimate on $\Nc_{g, \Fb_q(t)}(\Bc)$]\label{thm:Hyp_Jac_Fqt}
    If $\text{char} (\Fb_q) > 2g+1$, then the function $\Nc_{g, \Fb_q(t)}(\Bc)$, which counts the number of principally polarized hyperelliptic Jacobians $A = \mathrm{Jac}(X)$ where $X$ is a stable hyperelliptic genus $g \ge 3$ curves $X$ with a marked Weierstrass point over $\Pb^1_{\Fb_q}$ ordered by $0< ht(\Delta_{g}(X)) = q^{4g(2g+1)n} \le \Bc$, satisfies:
    \[
            \Nc_{g, \Fb_q(t)}(\Bc) \le 2g \cdot \frac{q^{4g(g+1)+1} \cdot (q^{2g-1}-1)(q^{2g}-1)}{(q-1)(q^{2g(2g+3)}-1)} \cdot \left( \Bc^{\frac{2g+3}{4g+2}}-1 \right) 
    \]
    \end{thm}

    \begin{proof} 
    Proposition~\ref{prop:curve_count} provides an upper bound on the number of stable hyperelliptic genus $g \ge 3$ curves with a marked Weierstrass point over $\Pb^1_{\Fb_q}$ with $\text{char} (\Fb_q) > 2g+1$. The upper bound follows from the open immersion property of the Torelli map $\tau_g$ restricted to the hyperelliptic locus (c.f. \cite[Theorem 1.2.]{Landesman}). 
    \end{proof}

    \medskip

    %\par Switching to the number field with $K = \Qb$ and $\Oc_K = \Zb$, one could choose the minimal integral Weierstrass model of a stable odd--degree hyperelliptic curve with the given hyperelliptic discriminant divisor $\Delta_{g}$ which is already a number. For the case of counting the odd--degree stable $g=2$ curves by the bounded height of $\Delta_{2}$, this renders the Conjecture~\ref{conj:Ab_Surf_Q} stated in the Introduction.

    % in relation with the principally polarized abelian surfaces via the Torelli map $\tau_2 : \M_2 \hookrightarrow \Ac_2$, 

    \subsection*{Connection to other Heights and related Enumerations}
    \par On a related note, we recall that the number of discriminants $\Delta_1$ of an elliptic curve over $\Zb$ with smooth generic fiber such that $\Delta_1 \le \Bc$ is estimated to be asymptotic to $\Oc\left(\Bc^{\frac{5}{6}}\right)$ by \cite{BMc}. The lower order term of order $\Oc\left(\Bc^{(7-\frac{5}{27}+\epsilon)/12}\right)$ for counting the stable elliptic curves over $\Qb$ by the bounded height of squarefree $\Delta_1$ was suggested by the work of \cite{Baier} improving upon their previous error term in \cite{BB}. In fact, Baier proved his asymptotic under the assumption of the generalized Riemann hypothesis with the twelveth root of the na\"ive height function on elliptic curves which gives the prediction above. For global function fields $\Fb_q(t)$, by considering the moduli of semistable elliptic surfaces and finding its motive/point count, we acquire the sharp enumeration \cite[Theorem 3]{HP} on $\Zc_{1, \Fb_q(t)}$ for counting the semistable elliptic curves by the bounded height of $\Delta_1(X)$ over $\Pb^{1}_{\Fb_q}$ with $\text{char} (\Fb_q) \neq 2,3$ giving the leading term of order $\Oc_{q}\left(\Bc^{\frac{5}{6}}\right)$ and the lower order term of zeroth order $\Oc_{q}(1)$. The arithmetic invariant which leads to the above counting also has been established in the past via different method by the seminal work of \cite{dJ}, which works also in characteristic 2 and 3. 

    \medskip

    %\par For genus 2 curves, the work of \cite{Liu3} proves $\Nc_{2} \le \nu(\Delta_{2})$ where he also shows that equality can fail to hold. And for hyperelliptic genus $g \ge 3$ curves, we have $\Nc_{g} \le \nu(\Delta_{g})$ proven recently by the work of \cite{Srinivasan, OSr}. Consequently, our enumeration $\Zc_{g, \Fb_q(t)}(\Bc)$ of stable hyperelliptic genus $g \ge 2$ curves with a marked Weierstrass point over $\Pb^{1}_{\Fb_q}$ with $\text{char} (\Fb_q) > 2g+1$ by the hyperelliptic discriminant $\Delta_{g}(X)$ bounds above the parallel counting by the conductor $\Nc_{g}$. 

    \par For genus $g \ge 2$ hyperelliptic curves, we have the qualitative finiteness shown by the classical works of \cite{Parshin, Oort}. For effective results we have \cite{BG} for counting by na\"ive height and \cite{Kanel} for partly explicit upper bound. %To the authors' knowledge this is the first time when an explicit enumeration with precise lower order terms for the number of stable odd--degree hyperelliptic genus $g \ge 2$ curves ordered by the bounded height of $\Delta_{g}$ over $\Pb^1_{\Fb_q}$ was acquired.

    \medskip

    \par Our project could be considered as an extension of the beautiful work done in \cite{EVW} by Jordan S. Ellenberg, Akshay Venkatesh and Craig Westerland. They proved in loc.cit. a function field analogue of the Cohen-Lenstra heuristics on distributions of class groups by point counting the \textit{Hurwitz spaces} parametrizing branched covers of the complex projective line. As the branched covers of the $\Pb^{1}$ are the fibrations with $0$-dimensional fibers, the moduli of fibrations $f: X \to \Pb^{1}$ on fibered surfaces $X$ with $1$-dimensional fibers is the next most natural case to work on. The counting technique in our project is driven largely by the inspiring work of Benson Farb and Jesse Wolfson \cite{FW} which in turn was motivated by the ideas in Graeme Segal's classical paper \cite{Segal}.

    %Similarly, we also lack explicit enumeration with precise lower order terms for the number of stable hyperelliptic genus $g \ge 3$ curves over $\Qb$ ordered by the bounded height of $\Delta_{g}$. 

    %\medskip

    %\par We note that counting curves over a global field $K$ by height of discriminant $ht(\Delta_{g})$ is more difficult than counting by \textit{na\"ive height} (see \cite[p. 446]{Brumer}). In the case of counting elliptic curves ordered by height of discriminant $ht(\Delta_{1})$, the region of lattice points with bounded discriminant has \textit{cusps}, meaning that there are \textit{unbounded points} with large $a_4,~a_6 \in \Oc_K$ (i.e., elliptic curves with large na\"ive height) and small discriminant $\Delta_{1}$. As explained in \cite{Hortsch}, controlling these cusps is difficult, even if the ABC conjecture is assumed.

    \medskip
    \medskip
    \bigskip

    %\newpage

    \textbf{Acknowledgments.} 
    The authors are indebted to Ariyan Javanpeykar for explaining to us the open immersion property of the Torelli map restricted to the hyperelliptic locus. We also thank Brian Conrad, Minhyong Kim and Qing Liu for useful comments on a draft of this paper as well as Dori Bejleri, Joe Harris, Will Sawin, Jesse Wolfson and Craig Westerland for helpful discussions. Jun-Yong Park was supported by the Max Planck Institute for Mathematics and the Institute for Basic Science in Korea (IBS-R003-D1) and thanks the Center for Geometry and Physics for its hospitality. Changho Han acknowledges the partial support of the Natural Sciences and Engineering Research Council of Canada (NSERC), [PGSD3-487436-2016], and of the Research and Training Group in Algebraic Geometry, Algebra and Number Theory, at the University of Georgia.

    \medskip

    %\newpage

    \vspace{+16 pt}

    \noindent Changho Han \\
    \textsc{Department of Mathematics, University of Georgia, Athens, GA 30602, USA}\\
      \textit{E-mail address}: \texttt{Changho.Han@uga.edu}

    \vspace{+16 pt}

    \noindent Jun--Yong Park \\
    \textsc{Max-Planck-Institut f\"ur Mathematik, Vivatsgasse 7 \\ 53111 Bonn, Germany} \\
      \textit{E-mail address}: \texttt{junepark@mpim-bonn.mpg.de}


\begin{thebibliography}{99}

        \bibitem[AGV]{AGV} D. Abramovich, T. Graber and A. Vistoli, 
        \textit{Gromov-Witten Theory of Deligne-Mumford Stacks}, 
        American Journal of Mathematics, \textbf{130}, No. 5, (2008): 1337--1398.

        \bibitem[AOV]{AOV} D. Abramovich, M. Olsson and A. Vistoli, 
        \textit{Tame stacks in positive characteristic}, 
        Annales de l'Institut Fourier, \textbf{58}, No. 4, (2008): 1057--1091.

        %\bibitem[AP]{AP} V. Alexeev and R. Pardini,
        %\textit{Non-normal abelian covers},
        %Compositio Mathematica, \textbf{148}, No. 4, (2012): 1051--1084.

        \bibitem[AW]{AW} E. Artin and G. Whaples,
        \textit{Axiomatic characterization of fields by the product formula for valuations},
        Bulletin of the American Mathematical Society, \textbf{51}, No. 7, (1945): 469--492.

        \bibitem[AW2]{AW2} E. Artin and G. Whaples,
        \textit{A note on axiomatic characterization of fields},
        Bulletin of the American Mathematical Society, \textbf{52}, No. 4, (1946): 245--247.

        \bibitem[Baier]{Baier} S. Baier,
        \textit{Elliptic curves with square-free $\Delta$},
        International Journal of Number Theory, \textbf{12}, No. 3, (2016): 737--764.

        \bibitem[BB]{BB} S. Baier and T. D. Browning,
        \textit{Inhomogeneous cubic congruences and rational points on Del Pezzo surfaces},
        Journal f\"ur die reine und angewandte Mathematik, \textbf{680}, (2013): 69--151.
        
        \bibitem[Behrend]{Behrend} K. A. Behrend,
        \textit{The Lefschetz trace formula for algebraic stacks},
        Inventiones mathematicae, \textbf{112}, No. 1, (1993): 127--149.

        %\bibitem[Behrens]{Behrens} M. Behrens,
        %\textit{A modular description of the $K(2)$-local sphere at the prime 3},
        %Topology, \textbf{45}, No. 2, (2006): 343--402.

        \bibitem[BG]{BG} M. Bhargava and B. Gross,
        \textit{The average size of the 2-Selmer group of Jacobians of hyperelliptic curves having a rational Weierstrass point},
        Automorphic representations and L-functions, The Studies in Mathematics, \textbf{22}, Tata Institute of Fundamental Research, Mumbai (2013): 23–-91.

        %\bibitem[BM]{BM} V. V. Batyrev and Y. I. Manin, 
        %\textit{Sur le nombre des points rationnels de hauteur born\'e des vari\'et\'es alg\'ebriques}, 
        %Mathematische Annalen, \textbf{286}, No. 1-3, (1990): 27--43.

        \bibitem[BMc]{BMc} A. Brumer and O. McGuinness,
        \textit{The behavior of the Mordell-Weil group of elliptic curves},
        Bulletin of the American Mathematical Society, \textbf{23}, No. 2, (1990): 375--382.

        %\bibitem[Brumer]{Brumer} A. Brumer,
        %\textit{The average rank of elliptic curves I},
        %Inventiones mathematicae, \textbf{109}, No. 1, (1992): 445--472.

        %\bibitem[BST]{BST} M. Bhargava, A. Shankar and J. Tsimerman,
        %\textit{On the Davenport-Heilbronn theorems and second order terms},
        %Inventiones mathematicae, \textbf{193}, No. 2, (2013): 439--499.

        %\bibitem[CQ]{CQ} G. Cardona and J. Quer,
        %\textit{Curves of genus 2 with group of automorphisms isomorphic to $D_{8}$ or $D_{12}$},
        %Transactions of the American Mathematical Society, \textbf{359}, No. 6, (2007): 2831--2849.

        \bibitem[de Jong]{dJ} A. J. de Jong,
        \textit{Counting elliptic surfaces over finite fields},
        Moscow Mathematical Journal, \textbf{2}, No. 2, (2002): 281--311.

        \bibitem[DM]{DM} P. Deligne and D. Mumford,
        \textit{The irreducibility of the space of curves of given genus},
        Publications Math\'ematiques de l'I.H.\'E.S., \textbf{36}, (1969): 75--109.

        %\bibitem[DR]{DR} P. Deligne and M. Rapoport,
        %\textit{Les Sch\'emas de Modules de Courbes Elliptiques},
        %Modular Functions of One Variable II. Lecture Notes in Mathematics, \textbf{349}, (1973): 143--316.

        %\bibitem[Duke]{Duke} W. Duke,
        %\textit{Elliptic curves with no exceptional primes},
        %Comptes Rendus de l'Acad\'emie des Sciences - Series I - Mathematics, \textbf{325}, No. 8, (1997): 813--818.

        \bibitem[EH]{EH} D. Eisenbud and J. Harris,
        \textit{The Kodaira dimension of the moduli space of curves of genus $\ge$ 23},
        Inventiones mathematicae, \textbf{90}, No. 2, (1987): 359--387.

        \bibitem[Ekedahl]{Ekedahl} T. Ekedahl,
        \textit{The Grothendieck group of algebraic stacks},
        arXiv:0903.3143, (2009).

        \bibitem[EVW]{EVW} J. Ellenberg, A. Venkatesh and C. Westerland,
        \textit{Homological stability for Hurwitz spaces and the Cohen-Lenstra conjecture over function fields},
        Annals of Mathematics, \textbf{183}, No. 3, (2016): 729--786.

        %\bibitem[Faltings]{Faltings} G. Faltings, 
        %\textit{Endlichkeitss\"atze f\"ur abelsche Variet\"aten \"uber Zahlk\"orpern}, 
        %Inventiones mathematicae, \textbf{73}, No. 3, (1983): 349--366.

        \bibitem[Fedorchuk]{Fedorchuk} M. Fedorchuk, 
        \textit{Moduli spaces of hyperelliptic curves with A and D singularities}, 
        Mathematische Zeitschrift, \textbf{276}, No. 1-2, (2014): 299--328.

        %\bibitem[FLNU]{FLNU} T. Fernex, E. Lupercio, T. Nevins and B. Uribe
        %\textit{Stringy Chern classes of singular varieties},
        %Advances in Mathematics, \textbf{208}, No. 2, (2007): 597--621.

        %\bibitem[FMT]{FMT} J. Franke, Y. I. Manin and Y. Tschinkel,
        %\textit{Rational points of bounded height on Fano varieties}, 
        %Inventiones mathematicae, \textbf{95}, No. 2, (1989): 421--435.

        \bibitem[Fujino]{Fujino} O. Fujino,
        \textit{Minimal Model Theory for Log Surfaces}, 
        Publications of the Research Institute for Mathematical Sciences, \textbf{48}, No. 2, (2012): 339--371.

        \bibitem[FW]{FW} B. Farb and J. Wolfson,
        \textit{Topology and arithmetic of resultants, I}, 
        New York Journal of Mathematics, \textbf{22}, (2016): 801--821.

        \bibitem[GGW]{GGW} C. G\'omez-Gonz\'ales and J. Wolfson,
        \textit{Problems in Arithmetic Topology},
        Research in the Mathematical Sciences, Special Issue of PIMS 2019 Workshop on Arithmetic Topology, \textbf{8}, No. 2, \#23 (2021).
     
        \bibitem[GHM]{GHM} C. Galindo, F. Hernando and F. Monserrat,
        \textit{The log-canonical threshold of a plane curve},
        Mathematical Proceedings of the Cambridge Philosophical Society, \textbf{160}, No. 3, (2016): 513--535.

        %\bibitem[Grant]{Grant} D. Grant,
        %\textit{A Formula for the Number of Elliptic Curves with Exceptional Primes},
        %Compositio Mathematica, \textbf{122}, No. 2, (2000): 151--164.
        
        \bibitem[Hassett]{Hassett} B. Hassett,
        \textit{Moduli spaces of weighted pointed stable curves},
        Advances in Mathematics, \textbf{173}, No. 2, (2003): 316--352.

        \bibitem[Hassett2]{Hassett2} B. Hassett,
        \textit{Classical and minimal models of the moduli space of curves of genus two},
        Geometric Methods in Algebra and Number Theory, Progress in Mathematics, \textbf{235}, Birkh\"auser Boston, (2005): 169–-192.

        \bibitem[HM]{HM} J. Harris and D. Mumford,
        \textit{On the Kodaira dimension of the moduli space of curves},
        Inventiones mathematicae, \textbf{67}, No. 1, (1982): 23--86.

        %\bibitem[HMe]{HMe} M. Hill and L. Meier,
        %\textit{The $C_2$-spectrum $Tmf_1(3)$ and its invertible modules},
        %Algebraic \& Geometric Topology, \textbf{17}, No. 4, (2017): 1953--2011.

        %\bibitem[Hortsch]{Hortsch} R. Hortsch,
        %\textit{Counting elliptic curves of bounded Faltings height},
        %Acta Arithmetica, \textbf{173}, No. 3, (2016): 239--253.

        \bibitem[HP]{HP} C. Han and J. Park,
        \textit{Arithmetic of the moduli of semistable elliptic surfaces}, 
        Mathematische Annalen, \textbf{375}, No. 3--4, (2019): 1745--1760.

        %\bibitem[HS]{HS} R. Harron and A. Snowden,
        %\textit{Counting elliptic curves with prescribed torsion}, 
        %Journal f\"ur die reine und angewandte Mathematik, \textbf{729}, (2017): 151--170.
        
        \bibitem[J\"arvilehto]{Jarvilehto} T. J\"arvilehto,
        \textit{Jumping numbers of a simple complete ideal in a two-dimensional regular local ring},
        Memoirs of the American Mathematical Society, \textbf{214}, No. 1009, (2011).

        \bibitem[K\"anel]{Kanel} R. V. K\"anel,
        \textit{An effective proof of the hyperelliptic Shafarevich conjecture}, 
        Journal de Th\'eorie des Nombres de Bordeaux, \textbf{26}, No. 2, (2014): 507--530.

        %\bibitem[KM]{KM} N. Katz and B. Mazur,
        %\textit{Arithmetic Moduli of Elliptic Curves},
        %Annals of Mathematics Studies, \textbf{108}, Princeton University Press (1985).
        
        \bibitem[Knudsen]{Knudsen} F. F. Knudsen,
        \textit{The projectivity of the moduli space of stable curves II, III},
        Mathematica Scandinavica, \textbf{52}, No. 2, (1983): 161--199, 200--212.
        
        \bibitem[Knutson]{Knutson} D. Knutson,
        \textit{Algebraic Spaces},
        Lecture Notes in Mathematics, \textbf{203}, Springer-Verlag, Berlin (1971).

        %\bibitem[Koll\'ar]{Kollar} J. Koll\'ar,
        %\textit{Lectures on Resolution of Singularities},
        %Annals of Mathematics Studies, \textbf{166}, Princeton University Press (2007).

        \bibitem[Landesman]{Landesman} A. Landesman,
        \textit{The Torelli map restricted to the hyperelliptic locus},
        Transactions of the American Mathematical Society, Series B, \textbf{8}, (2021): 354--378.

        \bibitem[Liedtke]{Liedtke} C. Liedtke,
        \textit{Algebraic Surfaces in Positive Characteristic}, 
        Birational Geometry, Rational Curves, and Arithmetic, Simons Symposia, Springer--Verlag New York (2013): 229--292.

        \bibitem[Liu]{Liu} Q. Liu,
        \textit{Algebraic geometry and arithmetic curves},
        Oxford Graduate Texts in Mathematics, \textbf{6}, Oxford University Press (2002).

        \bibitem[Liu2]{Liu2} Q. Liu,
        \textit{Mod\`eles entiers des courbes hyperelliptiques sur un corps de valuation discr\`ete},
        Transactions of the American Mathematical Society, \textbf{348}, No. 11, (1996): 4577--4610.

        %\bibitem[Liu3]{Liu3} Q. Liu,
        %\textit{Conducteur et discriminant minimal de courbes de genre 2},
        %Compositio Mathematica, \textbf{94}, No. 1, (1994): 51--79.

        \bibitem[LO]{LO} Y. Laszlo and M. Olsson
        \textit{The six operations for sheaves on Artin stacks II: Adic coefficients}, 
        Publications Math\'ematiques de l'I.H.\'E.S., \textbf{107}, (2008): 169--210.

        \bibitem[Lockhart]{Lockhart} P. Lockhart,
        \textit{On the discriminant of a hyperelliptic curve},
        Transactions of the American Mathematical Society, \textbf{342}, No. 2, (1994): 729--752.

        %\bibitem[LP]{LP} Y. Lekili and A. Polishchuk,
        %\textit{A modular compactification of $\M_{1,n}$ from $\Ac_{\infty}$-structures},
        %Journal f\"ur die reine und angewandte Mathematik, \textbf{755}, (2019): 151--189.

        %\bibitem[Mazur]{Mazur} B. Mazur,
        %\textit{Modular curves and the Eisenstein ideal},
        %Publications Math\'ematiques de l'I.H.\'E.S., \textbf{47}, (1977): 33--186.

        %\bibitem[Meier]{Meier} L. Meier,
        %\textit{Additive decompositions for rings of modular forms},
        %arXiv:1710.03461, (2017).
        
        \bibitem[Milne]{Milne} J. S. Milne,
        \textit{Jacobian Varieties}, 
        Arithmetic Geometry, (Storrs, Conn., 1984), Springer, New York, NY, (1986): 167--212.

        %\bibitem[Mordell]{Mordell} L. J. Mordell,
        %\textit{On the rational solutions of the indeterminate equation of the third and fourth degrees},
        %Proceedings of the Cambridge Philosophical Society, \textbf{21}, (1922): 179--192.
        
        %\bibitem[Niles]{Niles} A. Niles,
        %\textit{Moduli of elliptic curves via twisted stable maps},
        %Algebra and Number Theory, \textbf{7}, No. 9, (2013): 2141--2202.

        \bibitem[Olsson]{Olsson} M. Olsson, 
        \textit{\underline{Hom}-stacks and restriction of scalars}, 
        Duke Mathematical Journal, \textbf{134}, No. 1, (2006): 139--164.

        \bibitem[Olsson2]{Olsson2} M. Olsson, 
        \textit{Algebraic Spaces and Stacks},
        Colloquium Publications, \textbf{62}, American Mathematical Society (2016).

        \bibitem[Oort]{Oort} F. Oort, 
        \textit{Hyperelliptic curves over number fields}, 
        Classification of Algebraic Varieties and Compact Complex Manifolds, Lecture Notes in Mathematics, \textbf{412}, Springer--Verlag Berlin Heidelberg (1974): 211--218.

        \bibitem[OS]{OS} F. Oort and J. Steenbrink,
        \textit{The local Torelli problem for algebraic curves},
        Journ\'ees de G\'eometrie Alg\'ebrique d'Angers, Juillet 1979 / Algebraic Geometry, Angers, Sijthoff \& Noordhoff. Alphen aan den Rijn-Germantown Md. 1980, (1979): 157--204.

        \bibitem[OU]{OU} F. Oort and K. Ueno, 
        \textit{Principally polarized abelian varieties of dimension two or three are Jacobian varieties},
        Journal of the Faculty of Science, University of Tokyo, Section IA Mathematics, \textbf{20}, (1973): 377--381.

        %\bibitem[Ogg]{Ogg} A. P. Ogg, 
        %\textit{Elliptic curves and wild ramification}, 
        %American Journal of Mathematics, \textbf{89}, No. 1, (1967): 1--21.

        %\bibitem[OSr]{OSr} A. Obus and P. Srinivasan,
        %\textit{Conductor-discriminant inequality for hyperelliptic curves in odd residue characteristic},
        %arXiv:1910.02589, (2019).

        \bibitem[Parshin]{Parshin} A. N. Parshin,
        \textit{Minimal models of curves of genus 2 and homomorphisms of abelian varieties defined over a field of finite characteristic},
        Izv. Akad. Nauk SSSR Ser. Mat., \textbf{36}, No. 1, (1972): 67--109. (Russian).
        Mathematics of the USSR-Izvestiya, \textbf{6}, No. 1, (1972): 65--108. (English).

        \bibitem[PS]{PS} J. Park and H. Spink,
        \textit{Motive of the moduli stack of rational curves on a weighted projective stack},
        Research in the Mathematical Sciences, Special Issue of PIMS 2019 Workshop on Arithmetic Topology, \textbf{8}, No. 1, \#1 (2021).

        %\bibitem[Roberts]{Roberts} D. Roberts,
        %\textit{Density of cubic field discriminants}, 
        %Mathematics of Computation, \textbf{70}, No. 236, (2001): 1699--1705.

        %\bibitem[Saito]{Saito} T. Saito,
        %\textit{Conductor, discriminant, and the Noether formula of arithmetic surfaces},
        %Duke Mathematical Journal, \textbf{57}, No. 1, (1988): 151--173.

        \bibitem[Segal]{Segal} G. Segal, 
        \textit{The topology of spaces of rational functions}, 
        Acta Mathematica, \textbf{143}, (1979): 39--72.

        %\bibitem[Shafarevich]{Shafarevich} I. R. Shafarevich,
        %\textit{Algebraic number fields},
        %Proceedings of the International Congress of Mathematicians in Stockholm, 1962, Institut Mittag-Leffler, Djursholm, (1963): 163--176. (Russian).
        %American Mathematical Society Translations: Series 2, \textbf{31}, American Mathematical Society (1963): 25--39. (English).

        %\bibitem[Smyth]{Smyth} D. I. Smyth,
        %\textit{Modular compactifications of the space of pointed elliptic curves I},
        %Compositio Mathematica, \textbf{147}, No. 3, (2011): 877--913.

        %\bibitem[Smyth2]{Smyth2} D. I. Smyth,
        %\textit{Modular compactifications of the space of pointed elliptic curves II},
        %Compositio Mathematica, \textbf{147}, No. 6, (2011): 1843--1884.

        \bibitem[Stankova]{Stankova} Z. Stankova, 
        \textit{Moduli of trigonal curves}, 
        Journal of Algebraic Geometry, \textbf{9}, No. 4, (2000): 607--662.
        
        \bibitem[Stacks]{Stacks} The Stacks Project Authors: Stacks Project (2021). \url{https://stacks.math.columbia.edu/tags}

        %\bibitem[Stojanoska]{Stojanoska} V. Stojanoska,
        %\textit{Duality for topological modular forms},
        %Documenta Mathematica, \textbf{17}, (2012): 271--311.

        \bibitem[Sun]{Sun} S. Sun, 
        \textit{L-series of Artin stacks over finite fields}, 
        Algebra \& Number Theory, \textbf{6}, No. 1, (2012): 47--122.

        %\bibitem[Srinivasan]{Srinivasan} P. Srinivasan,
        %\textit{Conductors and minimal discriminants of hyperelliptic curves with rational Weierstrass points},
        %arXiv:1508.05172, (2015).

        \bibitem[Tanaka]{Tanaka} H. Tanaka,
        \textit{Minimal Models and Abundance for Positive Characteristic Log Surfaces}, 
        Nagoya Mathematical Journal, \textbf{216}, (2014): 1--70.

        %\bibitem[TT]{TT} T. Taniguchi and F. Thorne,
        %\textit{Secondary terms in counting functions for cubic fields},
        %Duke Mathematical Journal, \textbf{162}, No. 13, (2013): 2451--2508.

        \bibitem[Weil]{Weil} A. Weil, 
        \textit{Zum Beweis des Torellischen Satzes},
        Nachrichten der Akademie der Wissenschaften in G\"ottingen. II. Mathematisch-Physikalische Klasse, (1957): 33--53.


    \end{thebibliography}
\end{document}